\journal{}
\newcommand{\StatexIndent}[1][3]{%
  \setlength\@tempdima{\algorithmicindent}%
  \Statex\hskip\dimexpr#1\@tempdima\relax}
\setlist[enumerate]{labelindent = 0pt,leftmargin=*,label=$(\arabic*)$}
\setlist[itemize]{label=$\bullet$}
\numberwithin{equation}{section}
\newtheorem{theorem}{Theorem}[section]
\newtheorem{lemma}[theorem]{Lemma}
\newtheorem{corollary}[theorem]{Corollary}
\newtheorem{proposition}[theorem]{Proposition}
\theoremstyle{definition}
\newtheorem{definition}[theorem]{Definition}
\newtheorem{example}[theorem]{Example}
\newtheorem{remark}[theorem]{Remark}
\DeclareMathOperator{\rep}{rep}
\DeclareMathOperator{\soc}{soc}
\DeclareMathOperator{\Hom}{Hom}
\DeclareMathOperator{\supp}{supp}
\DeclareMathOperator{\rank}{rank}
\DeclareMathOperator{\Cov}{Cov}
\DeclareMathOperator{\pr}{pr}
\DeclareMathOperator{\Gr}{Gr}
\DeclareMathOperator{\Seg}{Seg}
\newcommand{\vect}{\mathrm{vect}}
\newcommand{\op}{^\mathrm{op}}
\newcommand{\Af}[1]{\vv*{A}{#1}}
\newcommand{\Gf}[1]{\vv*{G}{#1}}
\newcommand{\II}[1]{\mathbb{I}_{#1}}
\newcommand{\intv}{\mathbb{I}}
\newcommand{\br}[1]{\left[\!\left[#1\right]\!\right]}
\newcommand{\ang}[1]{\langle #1 \rangle}
\newcommand{\calC}{\mathcal{C}}
\newcommand{\calL}{\mathcal{L}}%%%%%%%%%%%
\newcommand{\bbZ}{\mathbb{Z}}
\newcommand{\bbZnn}{\bbZ_{\ge 0}}
\newcommand{\bbN}{\mathbb{N}}
\newcommand{\suchthat}{\ifnum\currentgrouptype=16 \;\middle|\;\else\mid\fi}
\newcommand{\udim}{\underline{\operatorname{dim}}}
\newcommand{\nd}[2]{d_{#1} (#2)}
\newcommand{\QComp}[2]{{#2}^{#1}}
\newcommand{\VComp}[3]{\operatorname{Comp}^{#1}_{#2}(#3)}
\newcommand{\dbarfun}[2]{\underline{d}^{#1}_{#2}}
\newcommand{\dbar}[3]{\dbarfun{#1}{#2}(#3)}
\newcommand{\rss}{{\textrm{ss}}}
\newcommand{\rcc}{{\textrm{cc}}}
\newcommand{\tot}{{\textrm{tot}}}
\newcommand{\aprx}[2]{\delta^{#1}_{#2}} %old:\widetilde{d}^{#1}_{#2}
\newcommand{\aprxM}[2]{\delta^{#1}(#2)}
\newcommand{\blank}{\operatorname{-}}
\newcommand{\id}{1\kern-.25em{\text{{\rm l}}}} 
\newcommand{\smat}[1]{\begin{smallmatrix}#1\end{smallmatrix}}
\newcommand{\cmark}{\ding{51}}%
\newcommand{\xmark}{\ding{55}}%
\newcommand{\Mat}{\operatorname{Mat}}
\newcommand{\EDIT}[1]{{#1}}
\begin{document}

\begin{frontmatter}
  \title{On Approximation of $2$D Persistence Modules by Interval-decomposables\tnoteref{mytitlenote}}

  \tnotetext[mytitlenote]{
    This work is partially supported by Japan Science and Technology Agency CREST Mathematics (15656429).
    H.A.\ is supported by JSPS Grant-in-Aid for Scientific Research (C) 18K03207, and
    JSPS Grant-in-Aid for Transformative Research Areas (A) (22A201).
    E.G.E.\ is supported by JSPS Grant-in-Aid for Transformative Research Areas (A) (22H05105).
    K.N.\ is supported by JSPS Grant-in-Aid for Transformative Research Areas (A) (20H05884).
    M.Y.\ is supported by JSPS Grant-in-Aid for Scientific Research (C) (20K03760).
    H.A.\ and M.Y.\ are partially supported by Osaka Central Advanced Mathematical Institute (MEXT Joint Usage/Research Center on Mathematics and Theoretical Physics JPMXP0619217849 \EDIT{and MEXT Promotion of Distinctive Joint Research Center Program JPMXP0723833165}).
    A part of this work was performed while E.G.E., K.N., and M.Y.\ were affiliated with
    Center for Advanced Intelligence Project, RIKEN, Tokyo, Japan.
  }

  \author[shizu,kuias,ocami]{Hideto Asashiba}
  \ead{asashiba.hideto@shizuoka.ac.jp}

  \author[kobe]{Emerson G. Escolar\corref{mycorrespondingauthor}}
  \cortext[mycorrespondingauthor]{Corresponding author}
  \ead{e.g.escolar@people.kobe-u.ac.jp}

  \author[shimane]{Ken Nakashima}
  \ead{knakashima@mat.shimane-u.ac.jp}

  \author[ocami]{Michio Yoshiwaki}
  \ead{yosiwaki@sci.osaka-cu.ac.jp, michio.yoshiwaki@omu.ac.jp}

  \address[shizu]{Faculty of Science, Shizuoka University, Japan}
  \address[kuias]{Institute for Advanced Study, Kyoto University, Japan}
  \address[ocami]{Osaka Central Advanced Mathematical Institute, Osaka, Japan}
  \address[kobe]{Graduate School of Human Development and Environment, Kobe University, Japan}
  % \address[riken]{Center for Advanced Intelligence Project, RIKEN, Tokyo, Japan}
  %\address[okayama]{Center for AI, Mathematical and Data Sciences, Okayama University, Japan}
  \address[shimane]{Faculty of Materials for Energy, Shimane University, Japan}

  \begin{abstract}
    In this work, we propose a new invariant for $2$D persistence modules called the compressed multiplicity and show that it generalizes the notions of the dimension vector and the rank invariant.
    In addition, for a $2$D persistence module $M$,
    we propose an ``interval-decomposable \EDIT{replacement}'' $\aprxM{\ast}{M}$
    (in the split Grothendieck group of the category of persistence modules),
    which
    is expressed
    by a pair of interval-decomposable modules,
    that is, its positive and negative parts.
    We show that $M$ is interval-decomposable if and only if %,
    $\aprxM{\ast}{M}$ is equal to $M$ in the split Grothendieck group.
    Furthermore, even for modules $M$ not necessarily interval-decomposable, $\aprxM{\ast}{M}$ preserves the dimension vector and the rank invariant of $M$.
    In addition, we provide an algorithm to compute $\aprxM{\ast}{M}$ (a high-level algorithm in the general case, and a detailed algorithm for the size $2\times n$ case).
  \end{abstract}

  \begin{keyword}
    Representation theory \sep Multidimensional persistence \sep Intervals
    \MSC[2020] 16G20 \sep 55N31
  \end{keyword}
\end{frontmatter}

%\linenumbers

% \tableofcontents\todo{erase in final version}

\section{Introduction}

Persistent homology \cite{edelsbrunner2002topological,edelsbrunner2008persistent} is one of the main tools in the rapidly growing field of topological data analysis. Given a filtration -- a one-parameter increasing sequence of spaces -- persistent homology captures the persistence of topological features such as connected components, holes, voids, etc. in the filtration.
Here, the persistence of features is quantified by birth and death parameter values. This can be summarized compactly by the so-called persistence diagram, which is the multiset of birth-death pairs drawn on the plane with multiplicity.

Algebraically, the persistence diagram can be explained
as resulting from a structure theorem (the Krull-Schmidt theorem (Theorem~\ref{thm:KS}) and Gabriel's Theorem~\cite{gabriel1972unzerlegbare}) of persistence modules, which can also be regarded as representations of certain quivers.
\EDIT{We however note that early definitions of persistence diagrams and related ideas used an inclusion-exclusion formula instead of this algebraic point of view. See for example \cite{landi1997new}, \cite{frosini1999size}, \cite{robins1999towards}, \cite{cohen2005stability}, \cite{cerri2016hausdorff}  % \cite{edelsbrunner2008persistent}, %Definition~3.2 of  % \cite{chazal2009proximity},
  and others\footnote{\EDIT{We do not provide a historical review here.
    Since we highlight in the next paragraph
    % are concerned with
    the algebraic difficulties of its generalization, multidimensional persistence,
    we have adopted this algebraic explanation here.}}.}
See Section~\ref{sec:background} for detailed definitions.

One way to deal with multiparametric data is to use multidimensional persistence \cite{carlsson2009theory}. However, multidimensional persistence presents theoretical difficulties that hinder the construction of a persistence diagram as in one-dimensional persistence.
\EDIT{In this work, for simplicity,
  we restrict our attention to the two-parameter case, and consider $2$D persistence modules.
  We thus study representations of the
  equioriented $m\times n$ commutative grid $\Gf{m,n}$ (a finite portion of the $2$D grid $\mathbb{Z}^2$).
  Even the $2$D case is sufficiently difficult.}
In particular, there is no complete discrete invariant that captures all isomorphism classes of indecomposable persistence modules \cite{carlsson2009theory}.
Another way of expressing this difficulty is that %the equioriented $m\times n$ commutative grid
the grid
$\Gf{m,n}$ of sufficiently large size ($m,n \geq 2$ and $mn \geq 12$, see \cite[Theorem~1.3]{bauer2020cotorsion}, \cite[Theorem~2.5]{leszczynski1994representation}, \cite[Theorem~5]{leszczynski2000tame}) is of wild representation type.

One way to avoid this problem is to consider only a restricted class of persistence modules. Inspired by $1$D persistence, there has been much interest in the so-called interval-decomposable representations, which are direct sums of interval representations (Definition~\ref{def:interval_rep}). The work \cite{asashiba2022interval} studied this family of representations and provided a criterion to determine whether or not a given persistence module is interval-decomposable.

It is hoped that most persistence modules coming from ``real-world data'' contain very few or indeed no non-interval summands. Let us consider the silica glass example computed in \cite{escolar2016persistence}, which  compares the atomic configuration of silica glass with its configuration after physical pressurization. The underlying bound quiver is the commutative ladder $C\!L_3(fb)$, with only two non-interval indecomposable representations given by dimension vectors $\left(\smat{1 & 1 & 1 \\ 1 & 2 & 1 }\right)$ and  $\left(\smat{1 & 2 & 1 \\ 0 & 1 & 0}\right)$. Then, the numerical result in \cite{escolar2016persistence} has $\left(\smat{1 & 1 & 1 \\ 1 & 2 & 1 }\right)$ appearing with only multiplicity $1$ and $\left(\smat{1 & 2 & 1 \\ 0 & 1 & 0}\right)$ with multiplicity $0$, in an example with more than ten thousand indecomposable summands. While in the slightly different setting of a non-equioriented commutative ladder, this provides an example where the non-interval part is minute compared to the interval-decomposable part.

On the other hand, the work \cite{buchet_et_al:socg} argues via a geometric example that the non-interval indecomposables may contain important information that should not be ignored, and that even in relatively simple geometric point clouds embedded in $\mathbb{R}^3$, indecomposable summands with arbitrarily large dimension (as a vector space) may be present. These large indecomposable summands are clearly not intervals.

In this work, we take neither position, but instead propose a method to replace an arbitrary persistence module $M\in\rep{\Gf{m,n}}$ by an object $\aprxM{\ast}{M}$ in the split Grothendieck group that is interval-decomposable.
The \EDIT{\emph{interval-decomposable replacement} (or \emph{interval-decomposable approximation})}\footnote{\EDIT{In an earlier version of this work we called $\aprxM{\ast}{M}$ an ``interval-decomposable approximation'' of $M$. However, more recent works such as \cite{blanchette2021homological,asashiba2023approximation} have used the term ``approximation'' in a relative homological sense. Thus, here, we have opted to mainly use the term ``replacement'' to avoid confusion, and also because it more closely describes how we think of $\aprxM{\ast}{M}$ relative to $M$. In the end of subsection~\ref{subsec:mobiustda} we provide some references to the point of view from relative homological algebra.}\label{footnotereplacement}} $\aprxM{\ast}{M}$  (Definition~\ref{def:tildeM}) is expressed by a pair of interval-decomposable modules, that is, its positive part $\aprxM{\ast}{M}_+$ and negative part $\aprxM{\ast}{M}_-$ (see \eqref{eq:pos-neg}).

To construct $\aprxM{\ast}{M}$, we first define what we call the \emph{compressed multiplicity} (Definition~\ref{defn:compressedmultiplicities}) of $M$ by a compression operation that picks up information in $M$ restricted to certain essential vertices of intervals.

The intuition behind the compressed multiplicity can be explained as follows.
As an initial goal, we want to compute the multiplicity of an interval $I$ as a direct summand of $M$.
Indeed, the work \cite{asashiba2022interval} presents an algorithm for this computation.
However, as this may not be straightforward, in this work we adopt a different approach. We first compress both $M$ and $I$ by restricting the underlying domain to certain essential vertices of $I$, and compute the multiplicity in the representation category with smaller underlying domain.

In the equioriented commutative ladder \cite{escolar2016persistence} case ($\Gf{2,n}$), the compression operation reduces the underlying bound quiver to a representation-finite bound quiver.  This enables easy computation of the  compressed multiplicity using preexisting algorithms.

We show that the compressed multiplicity in fact generalizes the notions of dimension vector (Proposition~\ref{prop:dimvec}) and rank invariant (Proposition~\ref{prop:rankinv}). Furthermore, we exhibit representations that can be distinguished by their compressed multiplicities but not by their rank invariants. We thus propose the compressed multiplicity as a new, finer invariant for $2$D persistence modules. Moreover, we show that for interval-decomposable representations, the multiplicity can be recovered from the compressed multiplicity (Theorem~\ref{thm:interval}).

Then, the object $\aprxM{\ast}{M}$ is defined using the M\"obius inversion of the compressed multiplicity of $M$.
This is a generalization of the well-known fact that the multiplicities of interval summands in $1$D persistence modules can be obtained via an application of inclusion-exclusion on the ranks of the linear maps. % (see for example \cite{edelsbrunner2008persistent,chazal2009proximity}).
In fact, early works around ideas related to persistence diagrams used this as the definition. See for example \cite{landi1997new}, \cite{frosini1999size}, \cite{robins1999towards}, \cite{cohen2005stability}, \cite{edelsbrunner2008persistent}, %Definition~3.2 of
\cite{chazal2009proximity}, and others.

That is, the persistence diagram is simply the M\"obius inversion of the rank invariant.
We note that several works have already exploited this observation to define ``generalized persistence diagrams'' in general settings. In Subsection~\ref{subsec:mobiustda}, we review some of them
and contrast them with our work.

In the case that $M$ is interval-decomposable, it follows that $\aprxM{\ast}{M}$
is equal to $M$ viewed as an element $\br{M}$ of the split Grothendieck group (Theorem~\ref{thm:intervaltilde});
that is, $\aprxM{\ast}{M}_+ \cong M$ and $\aprxM{\ast}{M}_- = 0$.
Furthermore, we show that even for modules $M$ not necessarily interval-decomposable, $\aprxM{\ast}{M}$ preserves the dimension vector and the rank invariant of $M$ (Corollary~\ref{cor:dimvec}, Theorem~\ref{thm:rank}). In this sense, we think of $\aprxM{\ast}{M}$ as an interval-decomposable \EDIT{``replacement'' for} $M$.

We organize this work as follows. In Section~\ref{sec:background}, we review the necessary background from representation theory of bound quivers and poset theory\footnote{In the persistence literature, there are at least two ways to consider multidimensional persistence modules: as representations of certain posets, or as representations of certain bound quivers. These are equivalent for the cases we are interested in. See Subsection~\ref{subsec:incidence} for a more detailed discussion. For example, \cite{kim2018generalized} uses the point of view of poset representations. In this work, we consider persistence modules as representations of bound quivers, except when comparing with the literature that uses poset representations. We reserve our use of posets for the poset whose elements happen to be interval representations, and do not directly consider representations of posets.}, and then, in Section~\ref{sec:interval_lattice}, we study the poset of interval representations.
In Section~\ref{sec:comp}, we introduce our concept of compressed multiplicities and study its properties.
%We show that the compressed multiplicity in fact generalizes the dimension vector (Proposition~\ref{prop:dimvec}) and rank invariant (Proposition~\ref{prop:rankinv}). Moreover, we show that for interval-decomposable representations, the multiplicity can be recovered from the compressed multiplicity (Theorem~\ref{thm:interval}). 
In Section~\ref{sec:approximation}, we give the construction of $\aprxM{\ast}{M}$ from $M$ via M\"obius inversion of the compressed multiplicity and give some results about its properties.
In Section~\ref{sec:algorithm}, we discuss the computation of our proposed compressed multiplicity and the interval-decomposable \EDIT{replacement}.

\subsection{\EDIT{Related literature}}%M\"obius inversions in persistence}
\label{subsec:mobiustda}

The prior work of Patel~\cite{patel2018generalized} used the idea of M\"obius inversion in order to define generalized persistence diagrams, but only in the setting of persistence modules over $(\mathbb{R},\leq)$ \cite[Definition~2.1]{patel2018generalized}.
  Then, the work \cite{mccleary2022edit} defines a concept of a persistence diagram
  for a filtered simplicial complex over any finite metric lattice $P$, by using a M\"obius inversion.
  Furthermore, they show a stability theorem with respect to the edit distance for filtrations of a fixed simplicial complex, and the bottleneck distance for persistence diagrams.
  In that work, the domain of the persistence diagrams is $\bar{P}$,
  which is the set of what we call the segments $[x,y] = \{z \in P \mid x \leq z \leq y\}$ of $P$.
  Applied to the commutative grid $P = \Gf{m,n}$ (viewed as a poset),
  we get a persistence diagram descriptor over the rectangles in $\Gf{m,n}$,
  different from our descriptor over what we call intervals (Definition~\ref{def:intervalsubq}) of $\Gf{m,n}$.
  We also note that the poset structure for $\bar{P}$ considered in \cite{mccleary2022edit} is different from
  the poset structure we give the set of intervals of $\Gf{m,n}$.

  Our work can be compared with the following prior work of Kim and Memoli~\cite{kim2018generalized}, % which further generalizes Patel's generalized persistence diagram~\cite{patel2018generalized}.
  which we were made aware of
  by a reviewer after an initial version of this work was sent for review.
  %
  % After an initial version of this work was sent for review, we were made aware by a reviewer of the prior work of Kim and Memoli~\cite{kim2018generalized}, which further generalizes Patel's generalized persistence diagram~\cite{patel2018generalized}.
  In Table~\ref{table:comparison}, we provide a rough overview of the different settings and a correspondence of some of the results, which we explain in detail below.

\begin{table}[!h]\centering \caption{Settings and Some Similar Results${}^{\ast}$}
  \begin{threeparttable}
    \begin{tabular}{lcc}
      \toprule\midrule
      & \thead{This work} & \thead{Kim and Memoli~\cite{kim2018generalized}} \\\midrule\midrule
      (1) Underlying setting & commutative grid $\Gf{m,n}$ & poset\tnote{1}{ } $P$ \\ \midrule
      (2) Target category & $\vect_K$ & category\tnote{2}{ } $\mathcal{C}$ \\ \midrule
      (3) Domain of invariant & $\II{m,n}$ \tnote{3} & $\mathbf{Con}(P)$\tnote{4} \\ \midrule
      (4) Invariant proposed & \scriptsize \makecell{compressed multiplicities\\ $\dbarfun{\ast}{M}:\II{m,n}\rightarrow \mathbb{N}$} & \scriptsize \makecell{generalized rank invariant\tnote{5}{ } \\ $\mathrm{rk}(M):\mathbf{Con}(P) \rightarrow \mathcal{J}(\mathcal{C})$} \\ \midrule
      (5) Inversion & \makecell{$\aprx{\ast}{M} : \II{m,n} \rightarrow \mathbb{Z}$ } & \scriptsize\makecell{generalized persistence diagram \\ $\mathrm{dgm}^P(M) :  \mathbf{Con}(P) \rightarrow \Gr(\mathcal{C})$} \\\midrule
      (6) Object & \scriptsize \makecell{interval-decomposable\\ \EDIT{replacement} \\ $\aprxM{\ast}{M} \in \Gr(\rep\Gf{m,n})$} & \textemdash\tnote{6} \\
      \midrule\midrule
      (7) \scriptsize\makecell{{ from proposed invariant}\\{ to true multiplicities}\\{ (interval-decomposable)}} & Theorem~\ref{thm:interval} & \cite[Theorem~3.14]{kim2018generalized}\\
      \midrule
      (8) \scriptsize\makecell{{ from true multiplicities}\\{ to proposed invariant}\\{ (interval-decomposable)}} & Lemma~\ref{lem:key} & \cite[Proposition~3.17]{kim2018generalized}\\
      \midrule
      (9)\scriptsize \makecell{{ Interpretation as}\\{ M\"obius inversion}} & Theorem~\ref{thm:mobiusequal} & \cite[Proposition~3.19]{kim2018generalized}\\
      \bottomrule\addlinespace[1ex]
    \end{tabular}
    \begin{tablenotes}\footnotesize
    \item[*] \textbf{This table is not intended to be a comprehensive summary of all results.}\tnote{7}
    \item[1] Essentially finite connected poset
    \item[2] Essentially small, symmetric monoidal category satisfying \cite[Convention~2.3]{kim2018generalized}
    \item[3] interval (connected and convex) subquivers
    \item[4] path-connected subposets
    \item[5] See \cite[Definition~3.5]{kim2018generalized}. The codomain $\mathcal{J}(\mathcal{C})$ is the set of isomorphism classes of $\mathcal{C}$.
    \item[6] Not explicitly defined. See however, \cite[Remark~3.22]{kim2018generalized}.
    \item[7] For example, \cite{kim2018generalized} contains results concerning Reeb graphs, which can be viewed as functors from the ``zigzag poset'' to the category of finite sets.
    \end{tablenotes}
  \end{threeparttable}
  \label{table:comparison}
\end{table}

While Kim and Memoli~\cite{kim2018generalized} consider a very general setting, we restrict our attention to $K$-representations of the commutative grid $\Gf{m,n}$ (see rows (1) and (2) of Table~\ref{table:comparison}). Since $\Gf{m,n}$ can be viewed as a poset $P$, which happens to be essentially finite and connected, their setting contains ours.
\EDIT{First,} 
the domains of the proposed invariants (see row (3) of Table~\ref{table:comparison}) are different.  We note that $\mathbf{Con}(P)$, the set of all path-connected subposets is in general different from the set of all interval subposets, and this is indeed the case for $P=\Gf{m,n}$.
%One motivation for our use of intervals is constructing approximations to $M$ using other simpler persistence modules.
%In contrast, 
The set $\mathbf{Con}(P)$ contains subposets that cannot be realized as the support of some persistence module.
For example, viewing $\Gf{2,2}$ as a poset with Hasse diagram (both filled and unfilled circles):
\begin{equation}
  \label{diag:noninterval}
  \begin{tikzcd}[graphstyle]
    \circ \rar & \bullet\\
    \bullet \rar\uar & \bullet \uar
  \end{tikzcd},
\end{equation}
the subposet $C$ given by the filled-in circles is in $\mathbf{Con}(P)$.
However, \EDIT{this is not an interval (Definition~\ref{def:intervalsubq}),}
and there is no thin\footnote{A persistence module is said to be \emph{thin} if all of its vector spaces have dimension at most $1$. For example, interval persistence modules are thin.} indecomposable persistence module over $\Gf{2,2}$ with support given by $C$, as a commutativity relation will be violated otherwise.
\EDIT{We do note however that subsequent works on the generalized rank invariant \cite{dey2021computing,kim2021generalized} have restricted the domain to the set of all intervals, instead of using $\mathbf{Con}(P)$.}

Furthermore, the proposed invariants (row (4) of Table~\ref{table:comparison}) are different.
We first note that both papers use of the idea of restricting the input persistence module $M$ to define the respective invariants. In \cite{kim2018generalized}, $M$ is restricted to $I \in \mathbf{Con}(P)$ to obtain $M|_I$. In the case that $I$ is in fact an interval, this corresponds to applying what we call the ``total compression'' functor (Definition~\ref{defn:compressionfunctor})
%to obtain $\VComp{\tot}{I}{M}$ 
in a more general setting. 

Kim and Memoli~\cite{kim2018generalized} then
defines the value of their generalized rank invariant at $I\in \mathbf{Con}(P)$ to be ``the isomorphism class of the image of the canonical limit-to-colimit map'' for $M_I$. Of course, in the case that the target category $\mathcal{C}$ is $\vect_K$, the category of finite-dimensional $K$-vector spaces, this value can be fully characterized by the dimension of the image.
In fact, one version of our invariant, which we call the ``total compressed multiplicity'', coincides with the dimensions of their generalized rank invariant (see Remark~\ref{remark:totkm}).
%coincide with obtained from the total compression which we call the $\tot$-compressed multiplicity
\begin{remark}
However, we emphasize that 
this total compressed multiplicity is not the main emphasis of this work.
Instead, % motivated by computation,
we propose the use of the source-sink ($\rss$-)compression yielding smaller representations (compared to $M|_I$), by further restriction to what we call the essential vertices of $I$.
We note that these do not coincide with the generalized rank invariant of \cite{kim2018generalized} for fixed $I$.
  See Example~\ref{example:unequal}. However, if we allow to change the form of the ``input'' to generalized rank invariant and broaden its domain of definition, we indeed recover values of our source-sink multiplicity (See Remark~\ref{remark:sskm}).
  % one given by $\tot$-compression.
% $\VComp{\rss}{I}{M}$ and $\VComp{\rcc}{I}{M}$ respectively. These are designed so that the resulting objects $\VComp{\rss}{I}{M}$ and $\VComp{\rcc}{I}{M}$ are representations of a smaller bound quiver (compared to $M|_I$), by further restriction to what we call the essential vertices of $I$.
\end{remark}

% Furthermore, from the inversion $\aprx{\ast}{M}$ we build the interval-decomposable replacement $\aprxM{\ast}{M}$ and show that even for representations $M$ not necessarily interval-decomposable, $\aprxM{\ast}{M}$ preserves the dimension vector and the rank invariant of $M$ (Corollary~\ref{cor:dimvec}, Theorem~\ref{thm:rank}).

\EDIT{
  Our interval-decomposable replacement $\aprxM{\ast}{M}$
  (and the generalized persistence diagram $\mathrm{dgm}^P(M)$ of \cite{kim2018generalized})
  are signed invariants, with positive and negative parts.
  Many recent works, such as \cite{botnan2021signed,blanchette2021homological,chacholski2023koszul,asashiba2023relative,asashiba2023approximation,blanchette2023exact} and others, are further studying signed invariants for persistence, especially from the point of view of relative homological algebra.
  While our use of the split Grothendieck group to formalize
  the interval-decomposable replacement $\aprxM{\ast}{M}$  (Definition~\ref{def:tildeM})
  is related to the (relative) homological algebra point of view,
  a full treatment is beyond the scope of this manuscript.
  Thus, we only provide brief comments below and
  point the interested reader to the literature listed above.
  For example, \cite{botnan2021signed} studied rank decompositions of the rank invariant
  and their connections to the generalized persistence diagrams and resolutions
  relative to a so-called rank-exact structure.
  \cite{blanchette2021homological} develops a framework for ``homological invariants''
  and studies several existing invariants in this framework.
  The work \cite{chacholski2023koszul} (and \cite{asashiba2023relative})
  provides a method for computing relative Betti numbers.
  In the case of resolutions relative to interval-decomposables, these
  are simply the multiplicities of each interval appearing in each term of a minimal resolution.
  One of the results of our subsequent work \cite{asashiba2023approximation}
  is a connection between (a modified version of) compressed multiplicity
  and resolutions relative to the interval-decomposables (and thus homological invariants).
  \cite{blanchette2023exact} is mostly expository,
  but contains some novel results:
  explicit descriptions of irreducible morphisms between relative projectives,
  and a way to ``lift'' the theory to certain infinite posets.
}
\FloatBarrier

% \tableofcontents

%%% Local Variables:
%%% mode: latex
%%% TeX-master: "main"
%%% End:

\section{Background}
\label{sec:background}

\subsection{Representation Theory}

We first recall some fundamental terminologies of representations of quivers
(see \cite{assem2006elements} for instance\footnote{
Note that there is a difference between our convention and theirs
in the order of arrows in paths. Namely, the path $\alpha_n \cdots \alpha_1$ in this paper
is written as $\alpha_1 \cdots \alpha_n$ in their book}).

A \emph{quiver} $Q$ is a quadruple $(Q_0,Q_1,s,t)$ of sets $Q_0, Q_1$ of vertices and arrows, respectively and maps $s,t:Q_1 \to Q_0$ that give the source and target vertices, respectively, of the arrows. We denote an arrow $\alpha$ with source $s(\alpha) =x $ and target $t(\alpha)=y$ by $\alpha:x\rightarrow y$.
The \emph{opposite} quiver $Q\op$ of $Q$ is the quiver given by $(Q_0, Q_1, t,s)$, namely the quiver obtained from $Q$ by reversing all arrows.
In this paper, all quivers $Q$ are assumed be {\em finite}, namely, $Q_0$ and $Q_1$ are finite.

Throughout this work, we fix a field $K$. Let $Q$ be a quiver. 
A {\em representation} $V$ of $Q$ (over $K$) is a family $(V(x), V(\alpha))$ of a vector space $V(x)$ for each vertex $x\in Q_0$ and a linear map $V(\alpha):V(x) \rightarrow V(y)$ for each arrow $\alpha:x\rightarrow y$ in $Q_1$.

The \emph{dimension vector} $\udim (V)$ of a representation $V$ of $Q$ is defined as the tuple
\[
  \udim (V) := (\dim V(x) )_{x \in Q_0}.
\]
It is customary to display the dimension vector by writing each number $\dim V(x)$ relative to where the vertex $x$ is located on an illustration of the quiver $Q$.
\EDIT{While the dimension vector does not uniquely specify the representation $V$,
  if it is clear from context, we also use the dimension vector to stand for $V$.}
The \emph{dimension} of $V$ is $\dim V := \sum\limits_{x\in Q_0} \dim V(x)$.
A representation $V$ of $Q$ is said to be {\em finite-dimensional} if $\dim V < \infty$.
In this work, by representation we mean finite-dimensional representation.

Let $V$ and $W$ be representations of $Q$. A morphism $f : V \rightarrow W$ from $V$ to $W$ is a family $(f_x)_{x\in Q_0}$ of linear maps $f_x : V(x)\rightarrow W(x)$ such that the following diagram commutes
for each arrow $\alpha : x \to y$:
\[
  \begin{tikzcd}
    V(x) \arrow[r, "f_x"] \arrow[d, "V(\alpha)"'] & W(x) \arrow[d, "W(\alpha)"] \\
    V(y) \arrow[r, "f_y"'] & W(y).
  \end{tikzcd}
\]
The composition of morphisms $f = (f_x)_{x\in Q_0} :V\rightarrow W$ and $g  = (g_x)_{x\in Q_0} :U \rightarrow V$ is defined in the obvious way: $f\circ g  :U \rightarrow W$ is given by $(f\circ g)_x = f_x \circ g_x$.
We denote by $\rep Q$ the category of finite-dimensional representations of $Q$ together with these morphisms and this composition.

For each vertex $i\in Q_0$, we have the {\em path of length $0$ at $i$}, which is denoted by $e_i$. 
For a given positive integer $n$, 
a {\em path $\mu$ of length} $n$ is a sequence $\alpha_n \cdots \alpha_1$ of arrows $\alpha_i$ such that $t(\alpha_i) = s(\alpha_{i+1})$ for all $i=1,\cdots,n-1$. The source vertex of $\mu$ is $s(\alpha_1)$, while its target vertex is $t(\alpha_n)$.
An $m$-tuple $\mu_1,\cdots,\mu_m$ of paths is said to be {\em parallel} if they all have the same source vertex and the same target vertex. 
A \emph{relation} $\rho$ in $Q$ is a formal sum $\rho=\sum\limits_{i=1}^{m} t_i \mu_i$ of parallel paths $\mu_i$,
\EDIT{where each path $\mu_i$ is of length at least $2$ and each $t_i$ is in $K$}.
%The set $R$ consists of all possible commutative relations in the quiver is called the set of {\em full commutativity relations}.  
A pair $(Q,R)$ of a quiver $Q$ and a set $R$ of relations is called a {\em bound quiver}.

A relation $\rho$ is called a {\em commutativity relation} if $\rho=\mu_1 - \mu_2$ for some two parallel paths $\mu_1,\mu_2$.
If $R$ is the set of all possible commutativity relations in $Q$, $(Q,R)$ is called {\em a quiver with full commutativity relations}.

Let $(Q,R)$ be a bound quiver and let $V$ be a representation of $Q$.
Put $V(\mu):= V(\alpha_n)\circ \cdots \circ V(\alpha_1)$ for any path $\mu=\alpha_n \cdots \alpha_1$ of length $n \geq 1$.
Then, $V\in \rep Q$ is said to be a representation of $(Q,R)$ if
$V(\rho):=\sum\limits_{i=1}^{m} t_i V(\mu_i)=0$ for any $\rho=\sum\limits_{i=1}^{m} t_i \mu_i \in R$.
We denote by $\rep(Q,R)$ the full subcategory of $\rep Q$ consisting of the representations of $(Q,R)$.

The \emph{path category} $KQ$ of $Q$ over $K$ is defined as follows.
The objects of $KQ$ are the vertices of $Q_0$.
For each pair $(i,j)$ of objects of $KQ$, the morphisms from $i$ to $j$
are the linear combinations of paths from $i$ to $j$.
The composition of $KQ$ is defined as the bilinearization of the concatenation of paths.
Then for each object $i$ of $KQ$, the identity morphism of $i$ is given
as the path $e_i$ of length 0 at $i$.
Note that the obtained category $KQ$ naturally becomes a $K$-category,
in the sense that $KQ(i,j)$ are $K$-vector spaces for all $i, j \in Q_0$, and
the composition is $K$-bilinear.
\EDIT{For a bound quiver $(Q,R)$, we denote} the factor category $KQ/\ang{R}$ %is denoted
by $K(Q,R)$, where
$\ang{R}$ is the ideal of the $K$-category $KQ$ generated by $R$.
For instance, this notation is used later for $(Q, R) = \Gf{m,n}$
in Section 4 (see Definition \ref{dfn:ss--cc}).
For each morphism $\mu$ in $KQ$,
the morphism $\mu + \ang{R}$ in $KQ/\ang{R}$ is usually denoted just by $\mu$,
and for morphisms $\mu$ and $\nu$ in $KQ$, we regard $\mu = \nu$ in $KQ/\ang{R}$ if and only if $\mu - \nu \in \ang{R}$.

A $K$-linear functor from $K(Q,R)$ to $\vect_K$, the category
of finite-dimensional $K$-vector spaces, is called
a (left) $K(Q,R)$-\emph{module}, which can be identified with a representation of
$(Q,R)$ in an obvious way.
From this fact, representations of $(Q,R)$ are sometimes called
modules (over $K(Q,R)$).
A representation $M$ of $(Q,R)$ is said to be {\em indecomposable} if $M \cong M_1 \oplus M_2$ implies that $M_1 = 0$ or $M_2 =0$.

\begin{remark}
\label{rmk:poset-quiver}
In this work, we consider persistence modules as representations of bound quivers, except when comparing with the literature that uses poset representations. For the comparison with the literature, 
we here summarize the relationship between representations of a poset and those of a bound quiver.
Let $P$ be a locally finite poset (see Definition \ref{dfn:seg} for local finiteness).
\begin{enumerate}
\item
The {\em Hasse quiver} $H(P)$ of $P$ is a quiver defined as follows.
The set $H(P)_0$ of vertices is given by $H(P)_0:= P$,
and for any vertices $x, y$,
the set $H(P)(x,y)$ of arrows from $x$ to $y$ is given either
as a singleton $H(P)(x,y):=\{p_{y,x}\}$
if $x < y$ in $P$ and if there exist no $z \in P$ with $x < z < y$;
or $H(P)(x,y):=\emptyset$ otherwise.
We consider a bound quiver $(H(P), R(P))$, where $R(P)$ is the set of all commutativity relations in $H(P)$.
\item
When we regard $P$ as a category, we temporarily denote it by $C(P)$.
Denote by $K[C(P)]$ the $K$-linearization of $C(P)$.
Then $K[C(P)]$ is isomorphic to $K(H(P), R(P))$ as a $K$-linear category.
\item
A representation of $P$ (e.g., considered in \cite{kim2018generalized}) is defined to be a functor from $C(P)$ to $\vect_K$, which is uniquely extended to a $K$-linear functor from $K[C(P)]$ to $\vect_K$.
Hence by (2) above,
the category of representations of $P$ is isomorphic
to the category of left $K(H(P), R(P))$-modules, and hence to the category of representations of the bound quiver $(H(P), R(P))$.
In this way, the representations of a poset are covered by those of a bound quiver
(Keep this point of view in mind when reading Remarks \ref{remark:totkm} and \ref{remark:sskm}).

\end{enumerate}
\end{remark}

A fundamental result in representation theory is the  Krull-Schmidt theorem
(see \cite[Theorem 12.9]{anderson2012rings} or
\cite[I.4.10 Unique decomposition theorem]{assem2006elements}).

\begin{theorem}[Krull-Schmidt] \label{thm:KS}
Let $\calL$ be a complete set of representatives of isomorphism classes of indecomposable representations of a bound quiver $(Q,R)$.
For each representation $M$ of $(Q, R)$, 
there exists a unique function $d_M \colon \calL \to \bbZnn$ such that
$$M \cong \bigoplus_{L \in \calL} L^{d_M(L)}.$$
\end{theorem}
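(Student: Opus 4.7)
The plan is to first establish existence of an indecomposable decomposition by induction on $\dim M$, and then prove uniqueness via the Krull--Schmidt--Azumaya argument built on the locality of the endomorphism rings of indecomposables.

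For existence, I would induct on $\dim M$. If $M$ is indecomposable, take $d_M$ to be the indicator function of the isomorphism class of $M$ in $\calL$. Otherwise write $M \cong M_1 \oplus M_2$ with both summands nonzero and of strictly smaller dimension; apply the inductive hypothesis to each and sum the resulting functions. The resulting $d_M \colon \calL \to \bbZnn$ automatically has finite support bounded by $\dim M$, since at most $\dim M$ indecomposable summands appear.

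The main effort is uniqueness. The standard route passes through Fitting's lemma: for any $f \in \End(M)$ with $M$ finite-dimensional, there exists $n$ such that $M = \ker(f^n) \oplus \mathrm{im}(f^n)$. Since kernels and images in $\rep(Q,R)$ are computed vertex-wise inside $\vect_K$, the classical proof (iterating $f$ and using that $\dim \ker f^n$ eventually stabilizes) carries over verbatim. When $M$ is indecomposable, this decomposition must be trivial, so every endomorphism of $M$ is either nilpotent or an isomorphism. From this one deduces that $\End(M)$ is a local ring: given two non-units $f, g$ (hence both nilpotent), if $f+g$ were a unit $u$, rescaling by $u^{-1}$ would yield nilpotent $a, b$ with $a + b = 1$, and then $b = 1-a$ would be invertible via the geometric series $\sum_{k\ge 0} a^k$, contradicting the nilpotence of $b$.

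Given that every indecomposable has a local endomorphism ring, uniqueness follows from the classical Krull--Schmidt--Azumaya matching argument: assume $\bigoplus_{i=1}^{s} L_i \cong M \cong \bigoplus_{j=1}^{t} L'_j$ and consider the identity of $L_1$ as $\sum_{j} p_j \iota_j$, where $\iota_j$ is the composite $L_1 \hookrightarrow M \twoheadrightarrow L'_j$ and $p_j$ is the reverse composite. Since $\End(L_1)$ is local, its non-units form an ideal, so at least one $p_j \iota_j$ must be an isomorphism; this forces $L_1 \cong L'_j$, and after cancelling $L_1$ from both sides one reduces to a strictly smaller instance on which the induction closes. The principal obstacle here is establishing Fitting's lemma and deriving locality from it; once locality is in hand, the matching argument is purely formal and identical to the one used for modules over arbitrary rings.
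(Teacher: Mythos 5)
The paper does not prove this theorem; it states it as a classical fact (``A fundamental result in representation theory is the Krull-Schmidt theorem'') and uses it without proof or citation, so there is no paper argument against which to compare. Your attempt is the standard Krull--Schmidt--Azumaya proof via Fitting's lemma, and it is correct in structure and in almost all details.

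One small imprecision is worth flagging in the locality step. You write that after rescaling the nilpotent non-units $f,g$ by $u^{-1}$ one obtains ``nilpotent $a,b$ with $a+b=1$,'' implicitly inferring nilpotence of $a=u^{-1}f$ from nilpotence of $f$. In a noncommutative ring, one-sided multiplication by a unit does not preserve nilpotence, so this inference is invalid as stated. The correct reasoning is: $a=u^{-1}f$ is a non-unit (otherwise $f=ua$ would be a unit), and \emph{then} the Fitting dichotomy for the indecomposable $M$ — every endomorphism is nilpotent or invertible — gives nilpotence of $a$; your geometric-series argument then shows $b=1-a$ is a unit, contradicting the fact that $b=u^{-1}g$ is a non-unit (or, as you say, is nilpotent, by the same dichotomy). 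With that repair, the existence induction, the transfer of Fitting's lemma to $\rep(Q,R)$ (valid since kernels and images are computed vertex-wise and $M$ is finite-dimensional), the locality of $\End(L)$ for $L$ indecomposable, and the matching-and-cancellation step all hold, and the proof is complete.
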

The function $d_M$ is called the \emph{multiplicity function} of $M$, and the value $d_M(L)$ the \emph{multiplicity} of the indecomposable $L$ in $M$.

As an example, let us consider the equioriented $A_n$-type quiver:
\[
  \Af{n}:
  \begin{tikzcd}
    1 \rar & 2 \rar & \cdots \rar & n
  \end{tikzcd}.
\]
It is known that in this case, $\calL$ is the set $\{\intv[b,d]\}_{1\leq b\leq d\leq n}$ of the so-called \emph{interval representations} $\intv[b,d]$ of $\Af{n}$ \cite{gabriel1972unzerlegbare}. The interval representation $\intv[b,d]$ is 
\[
  \intv[b, d] \colon 0 \longrightarrow \cdots
  \longrightarrow 0
  \longrightarrow \overset{b\text{-th}}{K} \longrightarrow K
  \longrightarrow \cdots
  \longrightarrow \overset{d\text{-th}}{K}
  \longrightarrow 0 \longrightarrow \cdots
  \longrightarrow 0,
\]
which has the vector space $\intv[b,d](i) = K$ at the vertices $i$ with $b\leq i\leq d$, and $0$ elsewhere, and where the maps between the neighboring vector spaces $K$ are identity maps and zero elsewhere. In the context of persistent homology \cite{edelsbrunner2002topological,edelsbrunner2008persistent}, a persistence module can be viewed as a representation of $\Af{n}$, and the multiplicity function $d_M$ encodes the information of the persistence diagram.

The underlying bound quiver we study in this work is the equioriented commutative grid $\Gf{m,n}$ defined below. Then, we consider $2$D persistence modules as representations of $\Gf{m,n}$. 
\begin{definition}[Equioriented commutative grid]
  Let $0 < m, n \in \mathbb{Z}$.  The bound quiver $\Gf{m,n}$, is defined to be the $2$D grid of size $m\times n$ with all horizontal arrows in the same direction and all vertical arrows in the same direction, together with full commutativity relations. It is also called the \emph{equioriented commutative grid} of size $m\times n$.
\end{definition}

For example, the equioriented $2\times 4$ commutative grid $\Gf{2,4}$ is the quiver
\[
  \begin{tikzcd}[graphstyle]
    \bullet \rar & \bullet \rar & \bullet \rar & \bullet \\
    \bullet \rar\uar & \bullet \rar\uar & \bullet\uar\rar & \bullet\uar
  \end{tikzcd}
\]
with full commutativity relations.

As mentioned in the introduction, for large enough size, $\Gf{m,n}$ is of wild representation type. That is, $\calL$ can be very complicated.
% and impossible to classify.
Instead, we consider a restricted class of representations, the interval-decomposable representations.
Following the notation in \cite{asashiba2022interval}, we first recall the definition of interval subquivers and interval representations for general bound quivers.
\begin{definition}[{Interval subquiver}]
  \label{def:intervalsubq}
  \leavevmode
  \begin{enumerate}
  \item Let $Q$ be a quiver. A full subquiver $Q'$ of $Q$ is said to be \emph{convex} in $Q$ if and only if for all vertices $x$, $y \in Q'_0$ and all vertices $z \in Q_0$, the existence of paths $x$ to $z$ and $z$ to $y$ in $Q$ imply that $z \in Q'_0$.

  \item A quiver Q is said to be \emph{connected} if it is connected as an “undirected graph”,

  \item A full subquiver $Q'$ of $Q$ is said to be an \emph{interval subquiver} of $Q$ if $Q'$ is convex (in $Q$) and connected.
  \end{enumerate}
\end{definition}

  Since an interval subquiver $I$ of $\Gf{m,n}$ is a full subquiver,
  (with $\Gf{m,n}$ fixed)
  $I$ is completely determined by its set of vertices $I_0$.
  Thus, we identify $I$ with its set of vertices $I_0$ where convenient.

For any two full subquivers $Q^\prime, Q^{\prime\prime}$ of $Q$,
the intersection $Q^\prime \cap Q^{\prime\prime}$ (respectively, the union $Q^\prime \cup Q^{\prime\prime}$) of $Q^\prime$ and $Q^{\prime\prime}$ is defined as the full subquiver of $Q$ having the vertex set $Q^\prime_0 \cap Q^{\prime\prime}_0$ (respectively, $Q^\prime_0 \cup Q^{\prime\prime}_0$).

Suppose that $Q^\prime$ and $Q^{\prime\prime}$ are interval subquivers of $Q$ with
$Q^\prime_0 \cap Q^{\prime\prime}_0 \not = \emptyset$. 
Note that $Q^{\prime} \cap Q^{\prime\prime}$ may not be connected, in general, and so may not be an interval. 
However, the following statement can be checked. 
%that  $Q^\prime \cap Q^{\prime\prime}$ is a disjoint union of interval subquivers of $Q$.

\begin{lemma} \label{lem:union}
  Let $Q^\prime$ and $Q^{\prime\prime}$ be interval subquivers of $Q$.
  Then, $Q^\prime \cap Q^{\prime\prime}$ is a disjoint union of interval subquivers of $Q$.
\end{lemma}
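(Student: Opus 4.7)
The plan is to decompose $Q' \cap Q''$ into its connected components and show that each component is convex in $Q$, hence is an interval subquiver of $Q$.

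First, I would establish the following auxiliary observation: if $Q'$ is convex in $Q$ and $x, y \in Q'_0$, then every vertex on every path from $x$ to $y$ in $Q$ lies in $Q'_0$. Indeed, for any vertex $w$ on such a path, the path splits as $x \to w$ followed by $w \to y$, so convexity of $Q'$ forces $w \in Q'_0$. Applying this to both $Q'$ and $Q''$ shows that if $x, y \in Q'_0 \cap Q''_0$, then every vertex on every path from $x$ to $y$ in $Q$ lies in $Q'_0 \cap Q''_0 = (Q' \cap Q'')_0$. In particular, taking the path to pass through any $z$ with paths $x\to z$ and $z\to y$, this gives $z\in (Q'\cap Q'')_0$, so the full subquiver $Q'\cap Q''$ is itself convex in $Q$.

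Next, let $C$ be any connected component of the full subquiver $Q' \cap Q''$. By construction $C$ is connected, so I only need to verify that $C$ is convex in $Q$. Suppose $x, y \in C_0$ and $z \in Q_0$ admits paths $x \to z$ and $z \to y$ in $Q$. Concatenating yields a path $\mu$ from $x$ to $y$ through $z$; by the auxiliary observation every vertex of $\mu$ lies in $(Q' \cap Q'')_0$. Since $Q' \cap Q''$ is a full subquiver, all arrows of $\mu$ also lie in $Q' \cap Q''$, so $\mu$ is a walk in $Q' \cap Q''$ connecting $x$ to $z$. As $x \in C_0$ and $C$ is a connected component, it follows that $z \in C_0$. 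This proves convexity of $C$ in $Q$, so $C$ is an interval subquiver.

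Therefore $Q' \cap Q''$ is the disjoint union of its connected components, each of which is an interval subquiver of $Q$. The only step that requires any care is the auxiliary observation that convexity on endpoints propagates to all intermediate vertices of a path; once that is in hand, both convexity of $Q'\cap Q''$ and convexity of each connected component follow in parallel. I do not foresee a genuine obstacle here, since everything reduces to the definitions of convex subquiver, full subquiver, and connected component.
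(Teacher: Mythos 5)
Your proof is correct and follows essentially the same route as the paper's: decompose $Q' \cap Q''$ into connected components and show each is convex by observing that every vertex on a path between two vertices of $Q' \cap Q''$ stays in $Q' \cap Q''$ (by convexity of both $Q'$ and $Q''$), whence the intermediate vertex $z$ is linked to $x$ within $Q' \cap Q''$ and therefore lies in the same component. The only cosmetic difference is that you state the convexity of $Q' \cap Q''$ as a standalone observation before passing to components, whereas the paper folds it directly into the component argument.
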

\begin{proof}
To see this, we write $Q^\prime \cap Q^{\prime\prime}$ as a disjoint union of its connected components $C_i$ for $i=1,\cdots, n$ and show that each connected component $C_i$ is actually an interval subquiver of $Q$. It suffices to check that $C_i$ is convex.

For that, let $x$, $y$ be vertices of $C_i$ and $z$ a vertex of $Q$ such that there exist paths $x$ to $z$ and $z$ to $y$ in $Q$.
We show that $z$ is a vertex of $C_i$.

For each path $z = z_0\to z_1\to \cdots\to z_\ell = y$ in $Q$,
since $Q^{\prime}$ and $Q^{\prime\prime}$ are convex and $x,y$ are both in $Q^{\prime}$ and $Q^{\prime\prime}$, each $z_k$ is a vertex of $Q^{\prime}$ and $Q^{\prime\prime}$. Thus, all $z_k$ are vertices in $Q^{\prime} \cap Q^{\prime\prime}$ and the path $z = z_0\to z_1\to \cdots\to z_\ell = y$ is in fact a path in $Q^{\prime} \cap Q^{\prime\prime}$. 
Since $z_\ell = y \in C_i$ and $C_i$ is a connected component, we must have $z_0 = z \in C_i$. Thus $C_i$ is convex.
%\qed
\end{proof}

On the other hand, 
$Q^\prime \cup Q^{\prime\prime}$ is not an interval subquiver in general,
even if $Q^\prime$ and $Q^{\prime\prime}$ are interval subquivers of $Q$ with $Q^\prime_0 \cap Q^{\prime\prime}_0 \not = \emptyset$. While connectedness is guaranteed since $Q^\prime_0 \cap Q^{\prime\prime}_0 \not = \emptyset$, convexity may fail to hold.

\begin{definition}
  For $0 < m,n\in\mathbb{Z}$, define $\II{m,n}$ to be the set of all nonempty interval subquivers of $\Gf{m,n}$.
\end{definition}

It is known that the interval subquivers of $\Gf{m,n}$ take on a distinctive ``staircase'' shape. See \cite{asashiba2022interval}. Below  is an example of an interval subquiver of $\Gf{4,6}$.
\begin{equation} \label{eq:st46}
  \newcommand{\bb}{\bullet}
  \newcommand{\rard}{\rar[dashed,gray]}
  \newcommand{\uard}{\uar[dashed,gray]}
  \begin{tikzcd}[graphstyle,every matrix/.append style={name=m}]
    \circ \rard & \bb \rar & \bb \rar & \bb \rard & \circ \rard & \circ \\
    \circ \rard\uard & \circ \rard\uard & \bb \rar\uar & \bb \rard\uar & \circ \rard\uard & \circ \uard\\
    \circ \rard\uard & \circ \rard\uard & \bb \rar\uar & \bb \rar\uar & \bb \rard\uard & \circ \uard\\
    \circ \rard\uard & \circ \rard\uard & \circ \rard\uard & \circ \rard\uard & \bb \rar\uar & \bb \uard
  \end{tikzcd}
\end{equation}
\EDIT{We also recall the example of a non-interval in \eqref{diag:noninterval}.}

Recall that for $M$ a representation of a bound quiver $(Q,R)$, the \emph{support} $\supp M$ of $M$ is the full subquiver of $Q$ with vertices $\{i\in Q \mid M(i)\not =0 \}$.
Finally, we are ready to recall the following generalization of interval representations of $\Af{n}$.

% \begin{definition}[Interval representations]
%   \label{def:interval_rep}
%   % Let $(Q,R)$ be a bound quiver.
%   A representation $V \in \rep(Q,R)$ is said to be an \emph{interval representation} if  \begin{itemize}
%   \item $\dim V(x) \leq 1$ for each vertex $x$ of $Q$,
%   \item its support $\supp(V)$ is an interval of $Q$, and
%   \color{blue}
%   \item for each arrow $\alpha \colon x \to y$ in $Q$ with $x,y \in \supp(V)$, it holds that $V(x) = V(y)$ and $V(\alpha)$ is the identity map of $V(x)$.
%   \color{black}
%   \end{itemize}
% \end{definition}

% Note that by this definition, an interval representation $V$ is determined (up to isomorphism) by its support $\supp V$. If $I$ is an interval subquiver, the corresponding interval representation with support equal to $I$ \color{blue}
% such that $V(x) = K$ for all $x \in Q_0$ is denoted by $V_I$.
% \color{black}
% For example, the interval subquiver $I$ of $\Gf{4,6}$ given by the quiver \eqref{eq:st46} is the support of $V_I$ with dimension vector \eqref{eq:st46dv}.

\begin{definition}[Interval representations]
\label{def:interval_rep}
Let $I$ be an interval subquiver of a quiver $Q$.
Then we define a representation $V_I$ of $Q$ as follows.
For each $x \in Q_0$ and each arrow $\alpha\colon x \to y$ in $Q$,
\[
V_I(x):=
\begin{cases}
K & \text{if $x \in I_0$},\\
0 & \text{otherwise},
\end{cases}
\quad \text{and}\quad
V_I(\alpha):=
\begin{cases}
1_K & \text{if $x, y \in I_0$},\\
0 & \text{otherwise}.
\end{cases}
\]
A representation of a bound quiver $(Q,R)$ is called an {\em interval representation} if it is isomorphic to $V_I$ for some interval subquiver $I$ of $Q$.
\end{definition}

Note that by construction, $V_I$ satisfies all the commutativity relations in $Q$.
It is obvious that $\supp V_I = I$.
For example, if $I$ is the interval subquiver of $\Gf{4,6}$ given by the quiver \eqref{eq:st46}, then the dimension vector of $V_I$ is given by  \eqref{eq:st46dv}.

A representation $M \in \rep(Q,R)$ is said to be \emph{interval-decomposable} if it can be expressed as a direct sum of interval representations. Equivalently, by Theorem~\ref{thm:KS}, $M$ is interval-decomposable if and only if  $d_M(L) = 0$ for all non-interval indecomposables $L$.

\subsection{Posets and Lattices}
In this subsection, we recall some basic definitions from poset and lattice theory. See \cite{stanley2011enumerative} for more details.

Recall that a poset (partially ordered set) $(P,\leq)$ is a set $P$ with partial order $\leq$. A poset $P$ is said to be {\em finite} if $P$ is finite as a set. 
The \emph{opposite} poset $P\op$ of $P$ is defined to be a poset $(P, \le\op)$, where for all $x, y \in P$, $x \le\op y$ if and only if $y \le x$.
Throughout this work, all posets are assumed to be finite.

\begin{definition}
\label{dfn:seg}
Let $P$ be a poset and $x,y\in P$.
%For any $x,y \in P$, 
The {\em segment} $[x,y]$ between $x$ and $y$ is defined to be 
$$
[x,y]:= \{z\in P \mid x \leq z \leq y   \}
$$
and define $\Seg(P)$ to be the set of all segments of $P$.
The poset $P$ is said to be {\em locally finite} if all segments of $P$ are finite sets.
The {\em open segment} $(x,y)$ between $x$ and $y$ is defined to be 
$$
(x,y):= \{z\in P \mid x < z < y   \}.
$$
It is clear that each segment of $P$ (respectively each open segment) of $P$ forms a subposet of $P$. 
We say that $y$ {\em covers} $x$ if $x < y$ and $(x,y) = \emptyset$.
%For any $x\in P$, 
The set of the elements covering $x$ is denoted by $\Cov (x)$. 
%We inductively define that, for a positive integer $b$, $y$ is called a $b$-{\em cover} of $x$ if $y$ covers a $(b-1)$-cover of $x$.
\end{definition}
We note that a segment $[x,y]$ is also called an interval in the literature,
but we do not use this term to avoid confusion.

\begin{definition}
Let $P$ be a poset and $S$ a subset of $P$. 
\begin{enumerate}
    \item An element $u\in P$ is said to be an {\em upper bound} of $S$ if $s\leq u$ for each $s\in S$. 
      The set of upper bounds of $S$ is denoted by $U(S)$.
      For a singleton $S = \{s\}$, we abuse the notation and write
      $U(s)$ for $U(\{s\})$.

    \item An element $x\in U(S)$ is said to be the {\em join} of $S$ if $x\leq u$ for each $u\in U(S)$. 
    Note that the join of $S$ is unique if it exists, and is denoted by $\bigvee S$.
    When $S=\{a,b\}$, then the join of $S$ is denoted by $a \vee b$.
\end{enumerate}
Dually, 
\begin{enumerate}
    \item[(3)] An element $l\in P$ is said to be an {\em lower bound} of $S$ if $l\leq s$ for each $s\in S$.
      The set of lower bounds of $S$ is denoted by $L(S)$.
      For a singleton $S = \{s\}$, we abuse the notation and write
      $L(s)$ for $L(\{s\})$.

    \item[(4)] An element $x\in L(S)$ is said to be the {\em meet} of $S$ if $l \leq x$ for each $l\in L(S)$. 
    Note that the meet of $S$ is unique if it exists, and is denoted by $\bigwedge S$.
    When $S=\{a,b\}$, then the meet of $S$ is denoted by $a \wedge b$.
\end{enumerate}
\end{definition}

\begin{definition}
Let $P$ be a poset.
\begin{enumerate} 
    \item $P$ is called a \emph{join-semilattice} (respectively, \emph{meet-semilattice}) if each two-element subset $\{a, b\}\subseteq P$ has a join (respectively, meet).
    \item $P$ is called a {\em lattice} if $P$ is a join-semilattice and a meet-semilattice.
    \item When $P$ is a lattice, $P$ is said to be {\em distributive} if 
    for all $x,y,z \in P$, 
    $$
    x \wedge (y \vee z) = (x \wedge y) \vee (x \wedge z)
    $$
    or equivalently, if for all $x,y,z \in P$,
    $$
    x \vee (y \wedge z) = (x \vee y) \wedge (x \vee z).
    $$
\end{enumerate}
\end{definition}

For a join-semilattice $P$ and $a, b, c \in P$, note that
$(a \vee b) \vee c =\bigvee\{a, b, c\} = a \vee (b \vee c)$. Thus the binary operation $\vee$  satisfies associativity, and hence generalized associativity.
Therefore in general, if $S=\{x_1,\hdots,x_n\}\subset P$, then 
\[
x_1 \vee x_2 \vee \cdots \vee x_n
\] 
is well-defined and equal to $\bigvee S$. A similar remark holds for $\bigwedge S$ in meet-semilattices.

The following fact is well-known and can be checked easily.
\begin{proposition} \label{prop:lattice}
If $P$ is a finite join-semilattice $($meet-semilattice$)$ with a lower bound $($upper bound\,$)$ of $P$, then $P$ is a lattice.
% If $P$ is a finite join-semilattice $($meet-semilattice$)$ with a lower bound $($upper bound\,$)$ of $P$, then $P$ is a lattice.
\end{proposition}

We will see later that the poset of intervals does not form a lattice globally, so we provide the following ``local'' definitions.
\begin{definition}
\ 
\begin{enumerate}
    \item A poset $P$ is called a {\em local lattice} if for any $x,y\in P$, 
    the segment $[x,y]$ is a lattice.
    \item A local lattice $P$ is said to be {\em locally distributive} if for any $x,y\in P$, 
    the segment $[x,y]$ is a distributive lattice. 
\end{enumerate}
\end{definition}

\subsection{The incidence algebra}
\label{subsec:incidence}
Let $F$ be a field
% (of characteristic zero \textcolor{red}{$\leftarrow$ is this assumption necessary?})
, and $P$ a
locally finite poset. Recall that $\Seg(P)$ is the set of segments of $P$. The \emph{incidence algebra} of $P$ over $F$ is the set of functions from $\Seg(P)$ to $F$, together with a ``pointwise'' $+$ operation, and convolution $*$ as the multiplication  operation. More precisely, for $f,g:\Seg(P) \rightarrow F$, define $f*g:\Seg(P) \rightarrow F$ by
\[
(f*g)([x,y]) := \sum\limits_{x\leq z \leq y} f([x,z])g([z,y]).
\]
%%%%%%%%%%%%%%%%%%%%%%
%%%% opposite version
 % \[
 %   (f*g)([x,y]) := \sum\limits_{x\leq z \leq y} \textcolor{red}{f([z,y])g([x,z])}.
 % \]
 %%%%%%%%%%%%%%%%%%%%%%
Note that the sum above is finite because $P$ is locally finite, and hence $f*g$ is well-defined.
It can be shown that the incidence algebra of $P$ over $F$ is indeed an $F$-algebra, which we denote by $I(P)$. Its identity element is the delta function $\delta:\Seg(P)\rightarrow F$ with
 \[
   \delta([x,y]) = \left\{
     \begin{array}{ll}
       1_F & \text{if } x=y,\\
       0 & \text{otherwise.}
     \end{array}
   \right.
\]

\begin{remark}
\label{rmk:incidence}
For readers familiar with quiver representation theory, the following facts may be helpful to understand the incidence algebra.
\begin{enumerate}
\item
We can regard $I(P)$ as the $F$-algebra $A$ whose underlying vector space consists of
all infinite (if $P$ is an infinite poset) linear combinations
of symbols $[x, y] \in \Seg(P)$ by identifying each element $f \in I(P)$ with
\[
\sum_{[x,y] \in \Seg(P)}f([x,y])[x,y],
\]
having the multiplication defined first by setting
\[
[x,y][u,v]:= \begin{cases}
[x, v] & \text{if } y=u\\
0 & \text{if } y \ne u
\end{cases}
\]
for all $[x,y],[u,v] \in \Seg(P)$ and then extending to all of $I(P)$ bilinearly.
Note that the multiplication is well-defined by the local finiteness of $P$.
In particular, the identity element $\delta$
corresponds to the sum $\sum_{x \in P} [x,x]$.
\item
Therefore, in the case where $P$ is a finite poset,
$I(P) = A$ above is isomorphic to the matrix algebra\footnote{%
For a $F$-linear category $C$ with the set $C_0$ of objects finite, the matrix algebra $\Mat(C)$ of $C$ is defined to be the $F$-vector space $\bigoplus_{(y,x)\in C_0 \times C_0}C(x,y)$ of matrices with $(y,x)$-entries
in $C(x,y)$ together with the usual matrix multiplication. 
% \[
% (f_{y,x})_{(y,x)\in C_0\times C_0}\cdot (g_{y,x})_{(y,x)\in C_0\times C_0}:= (\sum_{z \in C_0} f_{y,z}g_{z,x})_{(y,x)\in C_0\times C_0}.
% \]
Then the category of (finite-dimensional) left $\Mat(C)$-modules is equivalent to the category of left $C$-modules.
}
of the category $F(H(P\op), R(P\op))$ (see Remark \ref{rmk:poset-quiver}),
where 
each $[x,y] \in \Seg(P)$ corresponds to
the coset $[x\leftarrow y]$ of a path from $y$ to $x$ in $H(P\op)$\ ($\cong H(P)\op$),
and the composite $[x,y][u,v] = \delta_{y,u}[x,v]$
corresponds to $[v\leftarrow u][y\leftarrow x] = \delta_{y,u}[v\leftarrow x]$
in $F(H(P), R(P))$.
Thus the category of (finite-dimensional) left $I(P)$-modules
is equivalent to
the category of left $F(H(P\op), R(P\op))$-modules, and hence
to the category of representations of the bound quiver $(H(P\op), R(P\op))$
over $F$.
%

% The representations of $P$  are defined to be functors from $P$ to the category of (finite-dimensional) vector spaces over $F$.
% Therefore, since $A\op$ is a (completion of the) $F$-bilinearization of $P$, the representations of $P$ are
% nothing but the representations of $A\op$,
% and hence the representations of the bound quiver $(H, R)$.
%
% \item
% In this paper, we do not use this identification $I(P) = A$ because we apply the incidence algebra just for $P = \II{m,n}$, and in that case we are not interested in the representations of $\II{m,n}$.
\end{enumerate}
\end{remark}

\subsection{M\"obius Functions}
\label{subsec:mobius}

In this subsection, we assume that the characteristic of the field $F$ is zero, and
we review some basic facts about M\"obius functions. We refer the reader again to \cite{stanley2011enumerative} for more details.
In Sect.\ \ref{sec:approximation}, we apply the contents of this section in the setting that
$F = \mathbb{R}$ and $P = \II{m,n}$.

 \begin{definition}[Zeta and M\"obius functions]
 The \emph{zeta function} $\zeta:\Seg(P)\rightarrow F$ is the function with constant value $1_F$. Then, it can be shown that $\zeta$ is an invertible element of $I(P)$, with inverse called the \emph{M\"obius function} $\mu$.
\end{definition}

 Now, let $F^P$ be the set of all functions $P\rightarrow F$. Note that $F^P$ has a natural $F$-vector space structure by pointwise addition and scalar multiplication of functions.
 The incidence algebra $I(P)$ acts on $F^P$ from the left by the following.
 For each $f\in F^P$, $\phi\in I(P)$,
 define $\phi f \in F^P$ by
 \[
(\phi f)(x) := \sum\limits_{x\leq y}\phi([x,y])f(y) .
 \]
 %%%%%%%%%%%%%%
 %% opposite version 
 % \[
 %   (f\phi)(x) := \sum\limits_{x\leq y} f(y) \phi([x,y]).
 % \]
 %%%%%%%%%%%%%%%
 It can be checked that $F^P$ is a left %\textcolor{red}{right}
 $I(P)$-module
 with this left %\textcolor{red}{right} 
 action.
 For example, the computation
 \[
\begin{aligned}
(\psi(\phi f))(x) &= \sum\limits_{x\le y} \psi([x,y])(\phi f)(y) \\
&= \sum\limits_{x\le y}\psi([x,y])\left(\sum\limits_{y\le z}\phi([y,z]) f(z)\right)   \\
&= \sum\limits_{x \le y \le z} (\psi([x,y])\phi([y,z]))f(z) \\
&= \sum\limits_{x\le z}\left(\sum\limits_{x \le y \le z} \psi([x,y])\phi([y,z])\right) f(z)  \\
&= \sum\limits_{x\le z}(\psi*\phi)([x,z])  f(z)  \\
&= [(\psi * \phi)f](x),
\end{aligned}
\]
 %%%%%%%%%%%%%%%
 %%%%% opposite version
 %%%%%%%%%%%%%
% \[
%    \begin{aligned}
%      ((f\phi)\psi)(x) &= \sum\limits_{x\leq y} (f\phi)(y) \psi([x,y])\\
%                         &= \sum\limits_{x\leq y}\left(\sum\limits_{y\leq z} f(z)\phi([y,z])\right) \psi([x,y])  \\
%                         &= \sum\limits_{x \le y \le z}  f(z)(\phi([y,z])\psi([x,y])) \\
%                         &= \sum\limits_{x\le z} f(z) \left(\sum\limits_{x \le y \le z} \phi([y,z])\psi([x,y])\right) \\
%                         &= \sum\limits_{x\le z} f(z) (\phi*\psi)([x,z])  \\
%                         &= [f(\phi * \psi)](x)
%    \end{aligned}
% \]
%%%%%%%%%%%%%
 % \color{blue}
valid for all $f \in F^P,\ \phi, \psi \in I(P)$, and $x \in P$,
% \color{black}
shows that this action is compatible with the multiplication (convolution) in $I(P)$.

\begin{remark}
Again for readers more familiar with quiver representation theory, we make the following comment.
Consider the case that $P$ is a finite poset and its Hasse quiver is connected.
By the equivalence of categories explained in Remark \ref{rmk:incidence}(2),
the left $I(P)$-module $F^P$ defined above corresponds to the interval representation
$V_{P\op}$ of the bound quiver $(H(P\op), R(P\op))$.
This point of view may be useful for understanding Equation~\eqref{eq:key_function} and the surrounding discussion.
\end{remark}

%%% Local Variables:
%%% mode: latex
%%% TeX-master: "main"
%%% End:

\section{Local lattice of intervals}
\label{sec:interval_lattice}

In this section, we study the set of isomorphism classes of interval representations for a fixed equioriented commutative $2$D grid $\Gf{m,n}$.
Note that an interval representation is uniquely defined (up to isomorphism) by its support, and thus it suffices to consider the set of interval subquivers $\II{m,n}$.
We also recall that
with $\Gf{m,n}$ fixed,
an interval subquiver $I$ is completely determined by its set of vertices $I_0$,
and we identify $I$ with its set of vertices $I_0$ where this does not cause any confusion.

First, we start with the following easy observation. 
\begin{proposition}
  With the order $\leq$ on $\II{m,n}$ defined by
  $I \leq I^\prime \Longleftrightarrow I \subseteq I^\prime$,
  $(\II{m,n},\leq)$ is a poset.
\end{proposition}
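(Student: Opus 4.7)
The plan is to verify directly that the three defining properties of a partial order — reflexivity, antisymmetry, and transitivity — hold for the relation $\leq$ on $\II{m,n}$. Since $\leq$ is defined to be subquiver inclusion $\subseteq$, and since a full subquiver is completely determined by its vertex set (and then its arrow set is fixed as the set of all arrows of $\Gf{m,n}$ whose source and target are both in this vertex set), inclusion of full subquivers reduces to inclusion of the underlying vertex subsets of $(\Gf{m,n})_0$. Each of the three properties then follows immediately from the corresponding property of $\subseteq$ on subsets of $(\Gf{m,n})_0$.

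More concretely, for reflexivity, $I \subseteq I$ for any $I \in \II{m,n}$. For antisymmetry, if $I \subseteq I'$ and $I' \subseteq I$, then the vertex sets agree, so (as both are full subquivers of $\Gf{m,n}$) the arrow sets also agree, giving $I = I'$. For transitivity, $I \subseteq I'$ and $I' \subseteq I''$ together yield $I \subseteq I''$ by transitivity of set inclusion on vertex sets.

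I do not anticipate any genuine obstacle: the entire content is the observation that $\leq$ is inherited from the partial order $\subseteq$, and the well-definedness of the order on $\II{m,n}$ (as opposed to on all subquivers) is automatic since we are not claiming closure of $\II{m,n}$ under any operations, only that a restriction of a partial order to a subset remains a partial order. Thus the proof will be a short, essentially one-paragraph verification, with perhaps a single remark noting that intervals are a particular kind of full subquiver so the reduction to vertex-set inclusion is legitimate.
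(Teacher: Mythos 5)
Your proof is correct and takes the same approach the paper does; the paper simply compresses the argument to ``immediate from the definitions,'' and you have spelled out the reflexivity, antisymmetry, and transitivity checks that this phrase implicitly refers to. The observation that full subquivers are determined by their vertex sets, so that $\subseteq$ on $\II{m,n}$ reduces to set inclusion, is a fine way to ground the verification.
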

\begin{proof}
  This is immediate from the definitions.
  %\qed
\end{proof}

By Proposition~4.1 in \cite{asashiba2022interval}, each element $I$ of $\II{m,n}$ has a ``staircase'' form, which was denoted by:
\[
  I  = \bigsqcup_{i=s}^t [b_i,d_i]_i
\]
for some integers $1\leq s\leq t \leq m$ and some integers $1 \leq b_i \leq d_i \leq n$ for each $s\leq i\leq t$ such that
\begin{equation}
  \label{eq:intervalcond}
  b_{i+1}\leq b_{i}\leq d_{i+1}\leq d_{i} \text{ for all } i\in \{s,\dots,t-1\}.
\end{equation}
In this notation, each $[b_i,d_i]_i$ is the ``slice'' of the staircase at height $i$.
For example,
\EDIT{the interval $I = [5,6]_1 \sqcup [3,5]_2 \sqcup [3,4]_3 \sqcup [2,4]_4$
of $\Gf{4,6}$ can be visualized by the dimension vector of its corresponding interval representation:
\begin{equation} \label{eq:st46dv}
  \left(\smat{
      0&1&1&1&0&0 \\
      0&0&1&1&0&0 \\
      0&0&1&1&1&0 \\
      0&0&0&0&1&1
    }\right).
\end{equation}}
In general, the interval $I  = \bigsqcup_{i=s}^t [b_i,d_i]_i$ means that $I$ has vertices 
\[
  I_0 = \{ (i,x) \suchthat s\leq i \leq t, b_i \leq x \leq d_i \}.
\]
\EDIT{As above, we abuse the notation and
  use the corresponding dimension vector to denote the interval $I$.}

\begin{proposition}
  \label{prop:gradedposet}
  Let $I \in \II{m,n}$ and $J \in \Cov (I)$. Then, the number of vertices of $J$ is one more than that of $I$.
\end{proposition}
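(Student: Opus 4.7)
The plan is to show that whenever $I \subsetneq J$ in $\II{m,n}$, there exists an interval $K \in \II{m,n}$ with $I \subsetneq K \subseteq J$ and $|K_0| = |I_0|+1$. This suffices for the proposition: since $J$ covers $I$, no interval sits strictly between them, so $K$ must equal $J$ and hence $|J_0| = |I_0|+1$. Concretely, I will construct $K = I \cup \{v\}$ for a carefully chosen $v \in J_0 \setminus I_0$.

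Using the staircase description recalled above, write $I = \bigsqcup_{i=s}^{t}[b_i,d_i]_i$ and $J = \bigsqcup_{i=s'}^{t'}[b'_i,d'_i]_i$, each satisfying \eqref{eq:intervalcond}. The inclusion $I \subseteq J$ then translates into $s' \leq s \leq t \leq t'$ together with $b'_i \leq b_i$ and $d_i \leq d'_i$ for all $s \leq i \leq t$, and $I \neq J$ means at least one of these inequalities is strict. I would carry out a prioritized case analysis on the type of mismatch.

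Case~1: some $d_i < d'_i$. Let $i^*$ be the smallest such $i$; by minimality, $d_{i^*-1} = d'_{i^*-1}$ when $i^* > s$, so combining with the staircase condition $d'_{i^*} \leq d'_{i^*-1}$ of $J$ gives $d_{i^*} < d_{i^*-1}$. This is precisely the condition ensuring that $v := (i^*, d_{i^*}+1)$ can be adjoined while preserving \eqref{eq:intervalcond}, and $v$ lies in $J \setminus I$ since $b'_{i^*} \leq b_{i^*} \leq d_{i^*}+1 \leq d'_{i^*}$. Case~2: some $b'_i < b_i$; symmetric, using the largest such index $i^*$ and $v := (i^*, b_{i^*}-1)$. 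Case~3: neither Case~1 nor Case~2 applies, so $b'_i = b_i$ and $d'_i = d_i$ throughout $[s,t]$, and the strict inclusion forces $s' < s$ or $t' > t$. If $s' < s$, the staircase condition of $J$ yields $b'_{s-1} \leq d'_s = d_s \leq d'_{s-1}$, so $v := (s-1, d_s) \in J$ and adjoining the singleton bottom row $\{(s-1,d_s)\}$ to $I$ preserves \eqref{eq:intervalcond}; the case $t' > t$ is dual, with $v := (t+1, b_t)$.

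In every case $I \cup \{v\} \in \II{m,n}$ with exactly one more vertex than $I$, completing the construction. The main obstacle is the case analysis itself: a naive choice of an adjacent $v \in J_0 \setminus I_0$ may destroy convexity (for instance, producing a disconnected row, or a non-extendable outer corner), so the correct mechanism is to pick $v$ at an \emph{extremal} location of the mismatch. The extremal choice of $i^*$ in Cases~1 and~2 uses the staircase condition on $J$ (not only on $I$) to guarantee that the relevant neighboring-row inequality in \eqref{eq:intervalcond} survives the one-vertex extension, and in Case~3 the same staircase condition for $J$ is what forces the new bottom or top singleton row to fit.
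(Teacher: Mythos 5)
Your proof is correct and follows essentially the same strategy as the paper's sketch: given $I \subsetneq J$, exhibit a vertex $v \in J_0 \setminus I_0$ such that $I \cup \{v\}$ is again an interval, and then conclude from the definition of cover. The case analysis is organized differently — you prioritize row-width mismatches (right, then left) before extra rows, while the paper first splits on whether $J$ has extra rows below or above $I$ — but both pick the mismatch at an extremal row index so that the staircase condition \eqref{eq:intervalcond} survives the one-vertex extension.
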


\begin{proof}[Sketch of Proof]
  Suppose that $I \subsetneq J$. We show that there exists a point $p \in J_0 \setminus I_0$ that can be added to $I$ to obtain an interval $I'$ with $I\subsetneq I' \subseteq J$.
  The result immediately follows from this, since if $J \in \Cov(I)$, then $J = I'$ by definition.
  That is, $J$ has one more vertex compared to $I$.

  Let
  \[
    I = \bigsqcup_{i=s}^t [b_i,d_i]_i \text{ and }
    J = \bigsqcup_{j=u}^v [c_j,e_j]_j.
  \]
  Since $I \subsetneq J$, it follows that $u\leq s \leq t \leq v$ and $c_k \leq b_k \leq d_k \leq e_k$ for each $k \in [s,t]$, in addition to the requirements for $I$ and $J$ to be intervals.
  We give below the point $p \in J_0 \setminus I_0$ that can be added to $I$ to obtain the interval $I'$.

  \begin{itemize}
  \item In case that $1 \leq u < s$,
    \begin{itemize}
    \item if $c_{s-1} \leq d_s$, then choose the point $p = (s-1,d_s)$;
    \item otherwise, if $c_{s-1} > d_s$, choose $p = (s,d_s+1)$.
    \end{itemize}
  \item The case $t < v \leq m$ is dual to the previous case.
    \begin{itemize}
    \item If $b_t \leq e_{t+1}$ choose $p = (t+1, b_t)$;
    \item otherwise, $p = (t,b_t-1)$ works.
    \end{itemize}
  \item Otherwise, we have $u = s \leq t = v$. In this case, we define
    \[
      L = \{k\in[s,t] \mid (k,b_k-1) \in J_0\}
      \text{ and }
      R = \{k\in[s,t] \mid (k,d_k+1) \in J_0\}.
    \]
    These are the row indices where a point to the left (and right, respectively) of $I$ is in $J$.
    Since $I \neq J$, it is clear that at least one of $L$ and $R$ is nonempty.
    \begin{itemize}
    \item If $L \neq \emptyset$, choose the point $p = (\max L, b_{\max L} -1)$.
    \item If $R \neq \emptyset$, choose the point $p = (\min R, d_{\min R} +1)$.
    \end{itemize}
  \end{itemize}
  For each of the cases above (which exhausts all possibilities), a routine check using the definitions shows that
  the chosen point $p$ can be added to $I$ to obtain an interval $I'$. This completes the proof.
  %\qed
\end{proof}

The above result implies that
$\II{m,n}$ is a graded poset with rank function
$\rho : \II{m,n} \rightarrow \mathbb{N}$ given by
$\rho(I) = \# I_0$, the number of vertices of $I$.

\begin{example}
  \label{example:cover2n}
  For any $n\in\bbN$ and any interval $I=[b_1,d_1]_1 \sqcup [b_2,d_2]_2  \in \II{2,n}$, 
  $\# \Cov (I) \leq 4$. 
  Indeed, any cover of $I$ takes on one of the following forms: 
  \[
    % DO NOT REMOVE CURLY BRACES AROUND SQUARE BRACKETS (LATEX ERROR)
    % https://tex.stackexchange.com/questions/114422/square-bracket-curiosities-in-an-array-environment
    \begin{array}{l}
      [b_1 - 1,d_1]_1 \sqcup [b_2,d_2]_2,  \\ 
      {[} b_1, d_1 +1]_1 \sqcup [ b_2, d_2]_2,   \\ 
      {[} b_1, d_1]_1 \sqcup [ b_2 -1, d_2]_2, \text{ or}   \\ 
      {[} b_1, d_1]_1 \sqcup [ b_2,d_2 +1]_2.
    \end{array}
  \]
\end{example}

In general, we have the following, which follows immediately from Proposition~\ref{prop:gradedposet} and
the characterization of interval subquivers of $\Gf{m,n}$ as staircases.
\begin{proposition}
  \label{prop:cover}
  Let $I \in \II{m,n}$. Then,
  $\Cov(I) = \mathfrak{C} \cap \II{m,n}$
  where $\mathfrak{C}$ is the set of subquivers of $\Gf{m,n}$ obtained from $I$ by one of the following operations (if the result is a subquiver):
  \begin{enumerate}
  \item extending one row of $I$ by one adjacent vertex left of the row,
  \item extending one row of $I$ by one adjacent vertex right of the row,
  \item adding one vertex above the upper-left vertex of $I$, or
  \item adding one vertex below the lower-right vertex of $I$.
  \end{enumerate}
\end{proposition}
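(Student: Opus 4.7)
The plan is to use Proposition~\ref{prop:gradedposet}, which forces every $J \in \Cov(I)$ to satisfy $\# J_0 = \# I_0 + 1$, and hence $J = I \cup \{v\}$ for a single vertex $v$ of $\Gf{m,n}$ not in $I_0$. Since each of the four listed operations is itself a single-vertex extension of $I$, the inclusion $C \cap \II{m,n} \subseteq \Cov(I)$ is almost immediate: any such $J$ satisfies $\# J_0 = \# I_0 + 1$, so no subquiver strictly between $I$ and $J$ (in $\II{m,n}$ or otherwise) can exist, and thus $J \in \Cov(I)$.

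The content lies in the reverse inclusion $\Cov(I) \subseteq C \cap \II{m,n}$. I would fix a staircase decomposition $I = \bigsqcup_{i=s}^t [b_i, d_i]_i$ as in Proposition~4.1 of \cite{asashiba2018interval} and let $J = I \cup \{(k, x)\}$ with $(k, x) \notin I_0$, then split into cases on the row index $k$. If $k \in [s, t]$, then $(k, x) \notin I_0$ forces $x < b_k$ or $x > d_k$; any further separation (say $x \leq b_k - 2$) would produce two vertices $(k, x), (k, b_k) \in J_0$ whose horizontal connecting path in $\Gf{m,n}$ passes through an intermediate vertex $(k, x+1) \notin J_0$, contradicting convexity of $J$. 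Hence $x \in \{b_k - 1, d_k + 1\}$, giving operation (1) or (2). If $k \notin \{s-1, \ldots, t+1\}$, then $(k, x)$ has no neighbor in $I$ and $J$ is disconnected, contradicting $J \in \II{m,n}$. For the boundary case $k = s - 1$, connectivity forces $x \in [b_s, d_s]$; rewriting $J$ as $[x, x]_{s-1} \sqcup [b_s, d_s]_s \sqcup \cdots$ and imposing the staircase inequalities $b_s \leq b_{s-1} \leq d_s \leq d_{s-1}$ with $b_{s-1} = d_{s-1} = x$ squeezes $x = d_s$, yielding operation (4). The case $k = t + 1$ is dual and yields operation (3).

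The main obstacle will be the boundary case analysis (for $k = s - 1$ and $k = t + 1$), where the staircase inequalities must be applied carefully to pin down $x$ to a single value. The other cases are ruled out directly by either convexity (for vertices too far left or right in an existing row) or connectivity (for vertices in rows far from $I$), and are more routine. Once these cases are dispatched, the four listed operations are exactly the possibilities exhausted by the case split, completing the proof.
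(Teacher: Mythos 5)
Your proof is correct and takes essentially the same route the paper (implicitly) intends: invoke Proposition~\ref{prop:gradedposet} to reduce to single-vertex extensions, then use the staircase form from \cite{asashiba2018interval} for the case analysis. The paper states only that the result ``follows immediately'' from these two facts; your write-up supplies the missing details, and the case split on the row index $k$ together with the convexity/connectivity arguments and the squeeze from the inequalities $b_{i+1} \le b_i \le d_{i+1} \le d_i$ correctly pins down the new vertex to the four stated positions.
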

Let us express the above using the notation of
\[
  I  = \bigsqcup_{i=s}^t [b_i,d_i]_i
\]
for some integers $1\leq s\leq t \leq m$ and
some integers $1 \leq b_i \leq d_i \leq n$ for each $s\leq i\leq t$
such that $b_{i+1}\leq b_{i}\leq d_{i+1}\leq d_{i}$ for any $i\in \{s,\dots,t-1\}$.
Then $\Cov(I)$ is the set of \emph{valid interval subquivers} in the following
set of candidates $\mathfrak{C}$:
\begin{itemize}
\item  for $j \in \{s,\hdots, t\}$,
  \[
    \displaystyle\bigsqcup_{i=s}^t [b'_i,d_i]_i
    \text{, where }
    b'_i = \begin{cases}
      b_i-1 & \text{if } i=j,\\
      b_i & \text{otherwise,}
    \end{cases}
  \]
\item for $j \in \{s,\hdots, t\}$,
  \[
    \displaystyle\bigsqcup_{i=s}^t [b_i,d'_i]_i
    \text{, where }
    d'_i = \begin{cases}
      d_i+1 & \text{if } i=j,\\
      d_i & \text{otherwise,}
    \end{cases}
  \]
\item $\displaystyle\bigsqcup_{i=s}^t [b_i,d_i]_i \sqcup [b_t,b_t]_{t+1}$,
\item $\displaystyle [d_s,d_s]_{s-1} \sqcup \bigsqcup_{i=s}^t [b_i,d_i]_i$.
\end{itemize}
Note that some candidates may exceed the bounds of the commutative grid.
Those candidates are immediately disqualified.

\begin{example}
  \label{example:cover}
  We provide an example using the interval $I$  (filled-in circles):
  \[
    \newcommand{\bb}{\bullet}
    \newcommand{\rard}{\rar[dashed,gray]}
    \newcommand{\uard}{\uar[dashed,gray]}
    \begin{tikzcd}[graphstyle,every matrix/.append style={name=m}]
      \circ \rard & \circ \rard & \circ \rard & \circ \rard & \circ \rard & \circ \\
      \circ \rard\uard & \bb \rar\uard & \bb \rar\uard & \bb \rard\uard & \circ \rard\uard & \circ\uard \\
      \circ \rard\uard & \circ \rard\uard & \bb \rar\uar & \bb \rard\uar & \circ \rard\uard & \circ \uard\\
      \circ \rard\uard & \circ \rard\uard & \bb \rar\uar & \bb \rar\uar & \bb \rard\uard & \circ \uard\\
      \circ \rard\uard & \circ \rard\uard & \circ \rard\uard & \circ \rard\uard & \bb \rar\uar & \bb \uard
    \end{tikzcd}
  \]
  in the commutative grid $\Gf{5,6}$.
  We illustrate the vertices in Proposition~\ref{prop:cover}.
  \begin{itemize}
  \item Vertices $v$ with $I_0 \cup \{v\} = C_0$ for some $C \in \Cov{I}$ are denoted with green check marks. These give all the cover elements $C$.
  \item The remaining vertices $v$ do not form cover elements. That is, there is no interval $C$ with $C_0 = I_0 \cup \{v\}$. These are denoted with red crosses. Note that two of them go out of bounds.
  \end{itemize}
  \[
    \newcommand{\bb}{\bullet}
    \newcommand{\cb}{{\color{green}\text{\cmark}}}
    \newcommand{\xb}{{\color{red}\text{\xmark}}}
    \newcommand{\rard}{\rar[dashed,gray]}
    \newcommand{\uard}{\uar[dashed,gray]}
    \begin{tikzcd}[graphstyle,every matrix/.append style={name=m}]
      \circ \rard & \cb \rard & \circ \rard & \circ \rard & \circ \rard & \circ \\
      \cb \rard\uard & \bb \rar\uard & \bb \rar\uard & \bb \rard\uard & \xb \rard\uard & \circ\uard \\
      \circ \rard\uard & \cb \rard\uard & \bb \rar\uar & \bb \rard\uar & \cb \rard\uard & \circ \uard\\
      \circ \rard\uard & \xb \rard\uard & \bb \rar\uar & \bb \rar\uar & \bb \rard\uard & \cb \uard\\
      \circ \rard\uard & \circ \rard\uard & \circ \rard\uard & \cb \rard\uard & \bb \rar\uar & \bb \uard & \xb \\
      &&&&& \xb
    \end{tikzcd}
  \]
  Repeating the point above, each $C \in \Cov(I)$ is the unique interval subquiver $C$ with $I_0 \cup \{v\} = C_0$ for some vertex $v$ given by the green check marks.
\end{example}

\begin{proposition} \label{prop:locallattice}
  The poset $\II{m,n}$ is a local lattice.
\end{proposition}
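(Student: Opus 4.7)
The plan is to reduce everything to checking that segments of $\II{m,n}$ are meet-semilattices, then invoke Proposition~\ref{prop:lattice}. Fix $I \leq J$ in $\II{m,n}$; I want to show that the segment $[I,J]$ is a lattice. Since $\II{m,n}$ is finite, so is $[I,J]$, and $[I,J]$ has a least element $I$ and a greatest element $J$. By Proposition~\ref{prop:lattice}, it therefore suffices to show that $[I,J]$ is a meet-semilattice.

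Given $I', I'' \in [I,J]$, the natural candidate for $I' \wedge I''$ is built from the intersection. The intersection $I' \cap I''$ need not itself be an interval, but by Lemma~\ref{lem:union} it decomposes as a disjoint union of interval subquivers, namely its connected components. Since $I$ is a nonempty interval with $I \subseteq I'$ and $I \subseteq I''$, we have $I \subseteq I' \cap I''$; because $I$ is connected, it lies in exactly one connected component $C$ of $I' \cap I''$. I would take this component $C$ as the candidate meet.

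To justify that $C = I' \wedge I''$ in $[I,J]$, I would verify two things. First, $C \in [I,J]$: by construction $I \subseteq C$, and $C \subseteq I' \cap I'' \subseteq I' \subseteq J$, and $C$ is an interval subquiver by Lemma~\ref{lem:union}. Second, $C$ is a greatest lower bound: suppose $K \in [I,J]$ with $K \leq I'$ and $K \leq I''$. Then $K \subseteq I' \cap I''$, and $K$ is connected and contains $I$, so $K$ is contained in the unique connected component of $I' \cap I''$ containing $I$, i.e.\ $K \subseteq C$. Hence $C$ is indeed the meet in $[I,J]$.

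The argument is essentially routine once Lemma~\ref{lem:union} is in hand; the only mild subtlety is making sure the component $C$ is well-defined, which is exactly where the nonemptiness and connectedness of $I$ are used. I do not expect a genuine obstacle here. Note that this proof does not establish the existence of joins directly: in general the smallest interval containing $I' \cup I''$ inside $J$ can fail to sit inside $[I,J]$ in a simple form, which is precisely why it is easier to invoke Proposition~\ref{prop:lattice} through the meet-semilattice side after observing that $J$ is an upper bound.
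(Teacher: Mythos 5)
Your proof is correct and takes essentially the same route as the paper: decompose $I' \cap I''$ into connected components via Lemma~\ref{lem:union}, identify the unique component containing $I$ as the meet in $[I,J]$, and invoke Proposition~\ref{prop:lattice} using the existence of the top element $J$. The only difference is that you spell out the greatest-lower-bound verification, which the paper leaves implicit.
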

\begin{proof}
Let $I,J$ be intervals of $\II{m,n}$ with $I \leq J$. We show that the segment $[I,J]$ is a lattice.

Let $J_1,J_2 \in [I,J]$.
Then, by Lemma~\ref{lem:union}, the intersection $J_1 \cap J_2$ is 
%equal to a disjoint union of
given by the disjoint union of some
intervals $C_i$: $\bigsqcup\limits_{i=1}^{l} C_i$.
In this setting, there exists a unique $j$ such that $C_j$ contains $I$.
Then the meet $J_1 \wedge J_2$ of $J_1$ and $J_2$ %over $I$
in the segment $[I,J]$
is exactly the interval $C_j$.
Proposition~\ref{prop:lattice} implies that the segment $[I,J]$ is a lattice.
%\qed
\end{proof}

Note that in the above argument, the interval $J$ did not play any role in determining the meet
in $[I,J]$. We could have replaced $J$ by the maximum element $M$ in $\II{m,n}$,
which is the entire quiver of $\Gf{m,n}$.
That is, the meet of $J_1, J_2$ in $[I,J]$ is the same as the meet of $J_1, J_2$ in $[I,M] = U(I)$.
Thus, we also call the meet of $J_1, J_2$ in $[I,J]$ as the meet of $J_1, J_2$ over $I$.

% Let $I$ be an interval and $J_1, J_2 \in U(I)$.
% It is immediate from the definition that
On the other hand,  the join $J_1 \vee J_2$ in $[I,J]$ is
the minimum interval containing $J_1 \cup J_2$ by definition.
Clearly, $J_1 \cup J_2 \subset J \subset M$, and so
the join of $J_1, J_2$ in $[I,J]$ is the same as the join of $J_1, J_2$ in $[I,M] = U(I)$.
Thus, we also call the join of $J_1, J_2$ in $[I,J]$ as the join of $J_1, J_2$ over $I$.

% Recall that $J_1 \cup J_2$ is not an interval, in general.
% Let $I, I^\prime$ be intervals of $\II{2,n}$ such that $\vsupp I \cap \vsupp I^\prime \not= \emptyset$.
% Then the \emph{join} $I \vee I^\prime \in \II{2,n}$ of $I$ and $I^\prime$ is defined as follows:
% $$
% I \vee I^\prime := \min \{ J \in \II{2,n} \mid I \leq J, I^\prime \leq J\}.
% $$

% In the setting of Definition~\ref{def:join}, 
% $\supp I \cap \supp I^{\prime}$ is also an interval subquiver of $\Gf{2,n}$.
% The interval representation of $\Gf{2,n}$ given by the support $\supp I \cap \supp I^{\prime}$ is denoted by $I \cap I^\prime$.
\begin{example}
  Let
  $I = \left(\smat{0 & 1 & 0\\ 0 & 0 & 0}\right)$
  be an interval of $\II{2,3}$. The intervals
  $J=\left(\smat{0 & 1 & 1\\ 0 & 0 & 0}\right)$,
  $J^\prime=\left(\smat{0 & 1 & 0\\ 0 & 1 & 0}\right)$ in $U(I)$ have join
  $J \vee J^\prime = \left(\smat{0 & 1 & 1\\ 0 & 1 & 1}\right)$ over $I$.
\end{example}

While we have seen in Proposition~\ref{prop:locallattice} that $\II{m,n}$ is a local lattice, it is not a lattice as a whole (Example~\ref{ex:notlat}), nor is it locally distributive (Example~\ref{ex:notlocdist}).
\begin{example}
  \label{ex:notlat}
  In general, the meet and join is ill-defined.
  For example, let 
  $J = \left(\smat{1&0&0 \\ 0&0&0}\right)$ and 
  $J^\prime = \left(\smat{0&0&0\\0&0&1}\right)$ 
  be intervals in $\II{2,3}$.
  We note that $J \cap J^\prime = \emptyset$, so that there is no $I \in \II{m,n}$ with $J,J' \in U(I)$.  
  Then, 
  $X_1= \left(\smat{1 & 1 & 1\\ 0 & 0 & 1}\right)$
  and
  $X_2=\left(\smat{1 & 0 & 0\\ 1 & 1 & 1}\right)$
  are both minimal among intervals containing both $J$ and $J'$.
  Thus, $J \vee J^\prime$, which is supposed to be the minimum interval containing $J \cup J'$, is not well-defined. The poset $\II{m,n}$ is not a lattice, in general.
\end{example}

\begin{example}
  \label{ex:notlocdist}
  In general, the local lattice $\II{m,n}$ is not locally distributive.
  Indeed, let
  $I = \left(\smat{ 0 & 1 & 0 & 0\\ 0 & 0& 0 & 0}\right)$
  and
  $J = \left(\smat{1& 1 & 1 & 1\\1& 1 & 1 & 1}\right)$
  be intervals of $\II{2,4}$.
  Moreover, let
  $I_1= \left(\smat{1 & 1 & 0 & 0 \\ 1 & 1 & 1 & 0}\right)$,
  $I_2= \left(\smat{0 & 1 & 0 & 0\\ 0 & 1 & 1 & 1}\right)$,
  and
  $I_3= \left(\smat{0 & 1 & 1 & 1\\ 0 & 0 & 0 & 1}\right)$
  be intervals of the segment $[I,J]$. 
  Then we compute 
  $I_1 \vee (I_2 \wedge I_3) = I_1$ and
  $(I_1 \vee I_2) \wedge (I_1 \vee I_3) = \left(\smat{1 & 1 & 0 & 0\\ 1 & 1 & 1 & 1}\right) \neq I_1$.
\end{example}

%%% Local Variables:
%%% mode: latex
%%% TeX-master: "main"
%%% End:

\section{Compression and Compressed Multiplicities} \label{sec:comp}

In this section, we present the underlying mechanism for
\EDIT{``replacing'' (in Section~\ref{sec:approximation}) a persistence module by a related interval-decomposable object}.
Here, we define compression functors based on certain essential vertices. These compression functors then lead to what we call compressed multiplicities. We show that the well-known dimension vector and rank invariant are in fact special cases of compressed multiplicities. Furthermore, we show that for interval-decomposable representations, the true multiplicity information can be recovered from the compressed multiplicities.

\subsection{Essential Vertices}
First, we define two types of ``essential vertices''.
% Note that the theory could be generalized to abstractly defined sets of essential vertices, but this seems like an unneccesary generalization.

Recall that a vertex $x$ is said to be a \emph{source} if there are no arrows $\alpha$ with target $t(\alpha) = x$, and is said to be a \emph{sink} if there are no arrows $\alpha$ with source $s(\alpha) = x$.

% \textcolor{red}{Caution: Until here, an interval was a set of vertices, but in the following
% it is dealt with a subquiver.}

\begin{definition}[Source-sink-essential vertices]
  Let $I$ be an interval subquiver of $\Gf{m,n}$. 
  A vertex $x \in I_0$ is said to be {\em source-sink-essential} (\emph{ss-essential}) if $x$ is a source or a sink in $I$.

  The set of ss-essential vertices of $I$ will be denoted by $I^{\rss}_0$.
\end{definition}

\begin{example} \label{ex:ssess}
  In the following interval subquiver $I$ in $\Gf{6,4}$:
  \[
    \newcommand{\rb}{{\circledast}}
      % \textcolor{red}{\bullet}}
    I=
    \begin{tikzcd}[graphstyle]
      \rb & & \\
      \bullet \rar\uar &\bullet \rar & \rb \\
      \rb \rar\uar &\bullet \rar\uar & \bullet \rar\uar& \bullet \rar & \rb \\
      &\rb \rar\uar & \bullet \rar\uar & \bullet \uar \rar& \bullet \uar \rar & \rb,
    \end{tikzcd}
  \]
  the vertices denoted by $\circledast$ are ss-essential vertices of $I$. 
\end{example}

\begin{lemma} \label{lemcmp:ssess}
Let $I,J$ be intervals of $\II{m,n}$.  
%and $E_I^{ss}$ the set of all ss-essential vertices of $I$.
Assume that $I^{\rss}_0 \subseteq J_0$.
Then we have $I \leq J$.
%Assume that $I \leq J$. 
%Then there exists an essential vertex $x$ of $I$ with $x \not \in J_0$.
\end{lemma}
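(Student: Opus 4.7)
The plan is to show that every vertex $v \in I_0$ belongs to $J_0$, using the convexity of $J$ together with the hypothesis that all ss-essential vertices of $I$ lie in $J_0$. Since both $I$ and $J$ are full subquivers of $\Gf{m,n}$, the containment $I_0 \subseteq J_0$ immediately implies $I \leq J$, reducing the statement to a vertex-level inclusion.

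Fix $v \in I_0$. I would first construct a source and a sink of $I$ that sandwich $v$ in the natural product order on grid vertices (where $(a,b) \leq (c,d)$ iff $a \leq c$ and $b \leq d$, equivalently: a directed path from $(a,b)$ to $(c,d)$ exists in $\Gf{m,n}$). Consider the finite nonempty set $L_v := \{w \in I_0 \mid w \leq v\}$, and let $p$ be any minimal element. I claim $p$ is a source of $I$: any incoming arrow to $p$ in $\Gf{m,n}$ would originate from either $(p_1 - 1, p_2)$ or $(p_1, p_2 - 1)$, and if either of these lay in $I_0$ it would be a strictly smaller element of $L_v$, contradicting the minimality of $p$. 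Hence $p \in I^{\rss}_0$. Dually, let $q$ be a maximal element of $U_v := \{w \in I_0 \mid v \leq w\}$; the same argument shows $q$ has no outgoing arrow in $I$, so $q$ is a sink of $I$ and $q \in I^{\rss}_0$.

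By hypothesis, both $p$ and $q$ lie in $J_0$. Since $p \leq v \leq q$ in the product order, there exist paths $p \to v$ and $v \to q$ in $\Gf{m,n}$. The convexity of $J$ as an interval subquiver then forces $v \in J_0$. As $v \in I_0$ was arbitrary, $I_0 \subseteq J_0$, and therefore $I \leq J$. The only step requiring care is the identification of minimal (respectively maximal) elements of $L_v$ (resp. $U_v$) as sources (resp. sinks), which is a direct consequence of the orientation convention of $\Gf{m,n}$; no substantial obstacle is anticipated, and notably the argument never invokes the staircase description of $I$.
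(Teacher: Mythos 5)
Your proof is correct and takes essentially the same approach as the paper: both locate a source and a sink of $I$ bracketing the given vertex and then invoke convexity of $J$. The only difference is that you spell out the existence of such a source/sink pair via minimal/maximal elements in the product order, a step the paper asserts without elaboration.
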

\begin{proof}
Let $x\in I_0$. 
Then, there is a source $y$, a sink $z$, and a path $\mu$ in $I$ from $y$ to $z$ such that $\mu$ passes through $x$.
Since $y,z\in I^{\rss}_0 \subseteq J_0$ and $J$ is convex, 
%Note that for any vertices $y,z \in E_I \subseteq J_0$, all paths in $\Gf{m,n}$ between $y$ and $z$ are paths in $J$ since $J$ is convex.
we have $x \in J_0$, as desired.
%\qed
\end{proof}

\begin{definition}[Corner-complete-essential vertices]
Let $I$ be an interval subquiver of $\Gf{m,n}$. 
%Let $I^{\rss}_0$ be the set of ss-essential vertices of $I$.
We set $I^{\mathrm{cc}}_0:= (\pr_1 I^{\rss}_0 \times \pr_2 I^{\rss}_0) \cap I_0$,
where $\pr_i : \bbZ \times \bbZ \to \bbZ$ is the projection map to the $i$-th %axis.
\EDIT{coordinate}.
Elements of $I^{\mathrm{cc}}_0$ are said to be {\em corner-complete-essential} (\emph{cc-essential}),
and the full subquiver of $\Gf{m,n}$ given by this set is denoted by $I^{\mathrm{cc}}$.
% A vertex $x \in I_0$ is said to be {\em corner-complete-essential} (\emph{cc-essential}) if
% $x \in (\pr_1 I^{\rss}_0 \times \pr_2 I^{\rss}_0) \bigcap I_0$,
% The set of cc-essential vertices of $I$ is denoted by $I^{\rcc}_0$.
% \textcolor{red}{Caution: Since $x \in I_0$, the intersection $\cdots \cap I_0$ is not necessary.}
% \textcolor{blue}{How about the following?\\
% We set $I^{\mathrm{cc}}_0:= (\pr_1 I^{\rss}_0 \times \pr_2 I^{\rss}_0) \cap I_0$,
% the elements of which are said to be {\em corner-complete-essential} (\emph{cc-essential}),
% and the full subquiver of $\Gf{m,n}$ given by this set is denoted by $I^{\mathrm{cc}}$.
% }
\end{definition}

\begin{example} \label{ex:ccess}
For the interval subquiver $I$ used in Example~\ref{ex:ssess}:
\[
  \newcommand{\rb}{{\circledast}}
  % \textcolor{red}{\bullet}}
  I=
  \begin{tikzcd}[graphstyle]
    \rb & & \\
    \rb \rar\uar &\rb \rar & \rb \\
    \rb \rar\uar &\rb \rar\uar & \rb \rar\uar &\bullet \rar & \rb \\
    &\rb \rar\uar & \rb \rar\uar & \bullet \uar \rar &\rb \uar \rar & \rb\\
  \end{tikzcd}
\]
the vertices denoted by $\circledast$ are cc-essential vertices of $I$. 
\end{example}

\begin{lemma} \label{lemcmp:ccess}
Let $I,J$ be intervals of $\II{m,n}$.
Assume that $I^{\rcc}_0 \subseteq J_0$.
Then we have $I \leq J$.
\end{lemma}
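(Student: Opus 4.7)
The plan is to reduce this immediately to the analogous statement Lemma~\ref{lemcmp:ssess} for ss-essential vertices, which has already been proved. The key observation is that every ss-essential vertex of $I$ is automatically cc-essential, so the hypothesis $I^{\rcc}_0 \subseteq J_0$ is actually stronger than $I^{\rss}_0 \subseteq J_0$.

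More precisely, first I would verify the containment $I^{\rss}_0 \subseteq I^{\rcc}_0$ directly from the definition. Take any $y \in I^{\rss}_0$. Then $y = (\pr_1(y), \pr_2(y))$ satisfies $\pr_1(y) \in \pr_1 I^{\rss}_0$ and $\pr_2(y) \in \pr_2 I^{\rss}_0$, so $y \in \pr_1 I^{\rss}_0 \times \pr_2 I^{\rss}_0$; combined with $y \in I_0$, this gives $y \in (\pr_1 I^{\rss}_0 \times \pr_2 I^{\rss}_0) \cap I_0 = I^{\rcc}_0$.

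Chaining the inclusions, the hypothesis yields $I^{\rss}_0 \subseteq I^{\rcc}_0 \subseteq J_0$. Applying Lemma~\ref{lemcmp:ssess} then gives $I \leq J$, as required.

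There is essentially no obstacle here: the proof is a one-line definition chase followed by invocation of the previous lemma. The only thing to be careful about is making sure that the containment $I^{\rss}_0 \subseteq I^{\rcc}_0$ is genuinely immediate from the definition $I^{\rcc}_0 = (\pr_1 I^{\rss}_0 \times \pr_2 I^{\rss}_0) \cap I_0$, which it is, since any $y$ lies in the product of its own projections.
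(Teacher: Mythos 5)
Your proof is correct and matches the paper's argument exactly: the paper also observes $I^{\rss}_0 \subseteq I^{\rcc}_0 \subseteq J_0$ and invokes Lemma~\ref{lemcmp:ssess}. You merely spell out the trivial containment $I^{\rss}_0 \subseteq I^{\rcc}_0$, which the paper treats as immediate.
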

\begin{proof}
  Since $I^{\rss}_0\subseteq I^{\rcc}_0\subseteq J_0$, we have $I \leq J$ by Lemma~\ref{lemcmp:ssess}.
  %\qed
\end{proof}

%\begin{definition}[Totally-essential vertices]
%Let $I$ be an interval subquiver of $\Gf{m,n}$. 
%The vertex $(x,y)\in I_0$ is said to be {\em totally essential (tot-essential)} if $(x,y)\in I$.
%The set of tot-essential vertices is denoted by $I^{\tot}_0$. 
%\end{definition}

%Note that the full subquiver of $\Gf{m,n}$ consisting of $I^{\tot}_0$ is exactly $I$.
%Thus, $I^{\tot}_0 \subseteq J_0$ means $I\leq J$, obviously.

\subsection{Compression}
In this subsection, we treat both types of essential vertices in parallel to define two types of compression of representations of the equioriented $2$D commutative grid $\Gf{m,n} = (Q, R)$.
In the previous subsection, we defined the sets of essential vertices $I^{\rss}_0$ and $I^{\rcc}_0$.
We consider the full subcategories of $K\Gf{m,n} = K(Q,R) = KQ/\ang{R}$ they induce.

\begin{definition}[ss-compressed category and cc-compressed category]
\label{dfn:ss--cc}
  Let $I$ be an interval subquiver of $\Gf{m,n}$ and $E$ be the set of all ss-essential vertices (or cc-essential vertices, respectively) of $I$.
The \emph{ss-compressed category} $\QComp{\rss}{I}$ (resp.~\emph{cc-compressed category} $\QComp{\rcc}{I}$) of $I$ is the full subcategory of $K \Gf{m,n}$ with set of objects $E$.
\end{definition}

For completeness, we also introduce the following concept, where we take all vertices of $I$ to be essential.
We use the designation ``$\tot$'' to stand for ``total'', since all vertices are considered essential in $I^\tot$.

\begin{definition}[compressed category]
The \emph{compressed category} $\QComp{\tot}{I}$ is the full subcategory of $K \Gf{m,n}$ consisting of all vertices of $I$. 
\end{definition}

\begin{remark}
For an interval $I$, we distinguish the following similar but different notions related to $I$: $I$ itself as a full subquiver of $\Gf{m,n}$, $V_I$ the representation of $K\Gf{m,n}$ with support $I$, and $\QComp{\tot}{I}$ as the full subcategory of $K\Gf{m,n}$ with objects the vertices of $I$.
\end{remark}

We note that the bound quiver of $\QComp{\tot}{I}$ is $(I,R_I)$ with the set of full commutativity relations $R_I$.
%where $R$ is the set of full commutativity relations. 
The ss-compressed category or cc-compressed category can also be expressed as a bound quiver, and we identify $\rep(Q_I^{\ast},R_I^{\ast}) \cong \rep \QComp{\ast}{I}$, where $(Q_I^{\ast},R_I^{\ast})$ is the bound quiver of the compressed category $\QComp{\ast}{I}$ for $\ast=\rss, \rcc,\tot$.

Throughout the rest of this work, we shall use the symbol `$\ast$' to stand for either `$\rss$', `$\rcc$' or `$\tot$' for statements that apply to all three cases as long as it does not cause any confusion.

% Note that the quiver of the ss-compressed category $\QComp{\rss}{I}$ is always bipartite, so that the bound quiver $(Q^{\rss},R^{\rss})$  of $I^{\rss}$ has no relations: $R^{{\rss}} = 0$. 

\begin{example}
  \label{ex:compcats}
  For the interval subquiver $I$ in Example~\ref{ex:ssess}, the compressed categories (displayed as bound quivers) are the following:
  \[
    \newcommand{\rb}{{\circledast}}
      % \textcolor{red}{\bullet}}
    \QComp{\rss}{I} :
    \begin{tikzcd}[graphstyle]
      \rb                        &                             & \\
      &                             & \rb \\
      \rb\ar[rrr]\ar[uu]\ar[urr] &                             & & \rb \\
      &\rb\ar[rrrr]\ar[urr]\ar[uur] & & & &\rb
    \end{tikzcd}
  \]
  and
  \[
    \newcommand{\rb}{{\circledast}}
      %\textcolor{red}{\bullet}}
    \QComp{\rcc}{I} :  
    \begin{tikzcd}[graphstyle,every matrix/.append style={name=m},
      execute at end picture={
        % compute box centers:
        \node (c1) at ($(m-3-1.center)!0.5!(m-2-2.center)$) {};
        \node (c2) at ($(m-3-2.center)!0.5!(m-2-3.center)$) {};
        \node (c3) at ($(m-4-2.center)!0.5!(m-3-3.center)$) {};
        \node (c4) at ($(m-4-3.center)!0.5!(m-3-5.center)$) {};
        \foreach \x in {c1,c2,c3,c4}{
          \draw[-{stealth[flex=0.75]}]([shift=(30:0.3em)]\x) arc (30:330:0.3em);
        }
      }]
      \rb         &             &                & &            & \\
      \rb\rar\uar & \rb\rar     & \rb            & &            & \\
      \rb\rar\uar & \rb\rar\uar & \rb\ar[rr]\uar & &\rb         & \\
      & \rb\rar\uar & \rb\ar[rr]\uar & &\rb\uar\rar & \rb 
    \end{tikzcd}
  \]
  while
  \[
    \newcommand{\rb}{{\circledast}}
      %\textcolor{red}{\bullet}}
   \QComp{\tot}{I} :  
%    I:
    \begin{tikzcd}[graphstyle,every matrix/.append style={name=m},
      execute at end picture={
        % compute box centers:
        \node (c1) at ($(m-3-1.center)!0.5!(m-2-2.center)$) {};
        \node (c2) at ($(m-3-2.center)!0.5!(m-2-3.center)$) {};
        \node (c3) at ($(m-4-2.center)!0.5!(m-3-3.center)$) {};
        \node (c4) at ($(m-4-3.center)!0.5!(m-3-4.center)$) {};
        \node (c5) at ($(m-4-4.center)!0.5!(m-3-5.center)$) {};
        \foreach \x in {c1,c2,c3,c4,c5}{
          \draw[-{stealth[flex=0.75]}]([shift=(30:0.3em)]\x) arc (30:330:0.3em);
        }
      }]
      \rb         &             &                & &            & \\
      \rb\rar\uar & \rb\rar     & \rb            & &            & \\
      \rb\rar\uar & \rb\rar\uar & \rb\ar[r]\uar & \rb\ar[r] &\rb         & \\
      & \rb\rar\uar & \rb\ar[r]\uar & \rb\ar[r]\uar &\rb\uar\rar & \rb 
    \end{tikzcd}.
  \]
    % We can construct the quiver $Q$ above as follows:
  % \begin{enumerate}
  % \item the set of vertices is $E$.
  % \item for any vertices $x\not =y\in Q_0$, we draw only one arrow $\alpha_{x,y} : x \to y$ in $Q$ if there is a path from $x$ to $y$ in $I$. 
  % \item for any vertices $x\not =y\in Q_0$, we remove arrows $\alpha:x\to y$ drawing in (2) such that the length of $\alpha$ is $1$ if $\# Q_1 (x,y) > 1$.
  % \end{enumerate}
\end{example}

\begin{definition}[Compression functor]
  \label{defn:compressionfunctor}
  Let $I$ be an interval subquiver of $\Gf{m,n}$ and let
  $\iota^{\rss}_I: \QComp{\rss}{I} \hookrightarrow K\Gf{m,n}$ (or $\iota^{\rcc}_I: \QComp{\rcc}{I} \hookrightarrow K\Gf{m,n}$, or $\iota_I^\tot: \QComp{\tot}{I} \hookrightarrow K\Gf{m,n}$, respectively) be the inclusion functor into the equioriented $2$D commutative grid.
  
  The \emph{ss-compression functor}
  $\VComp{\rss}{I}{\blank}:\rep K\Gf{m,n} \to \rep\QComp{\rss}{I}$
  (the \emph{cc-compression functor}
  $\VComp{\rcc}{I}{\blank}$ 
  or the \emph{tot-compression functor}
  $\VComp{\tot}{I}{\blank}$,
  respectively)
  is defined by $\VComp{\rss}{I}{M} = M \circ \iota^{\rss}_I$ ($\VComp{\rcc}{I}{M} = M \circ \iota^{\rcc}_I$ or $\VComp{\tot}{I}{M} = M \circ \iota_I^\tot$, respectively). That is,
 \[
 \VComp{\ast}{I}{M} = M \circ \iota^{\ast}_I
 \]
  for $\ast=\rss, \rcc,\tot$.
  
  Note that these functors are exactly the restriction functors.
\end{definition}

It is clear that the ss-compression, cc-compression, and tot-compression functors are additive by definition. To simplify the notation, we let $\VComp{\ast}{I}{\blank}$ stand for  $\VComp{\rss}{I}{\blank}$, $\VComp{\rcc}{I}{\blank}$, or $\VComp{\tot}{I}{\blank}$ for statements that hold for all three versions of compression.

Given $M \in \rep\Gf{m,n}$, the compressed representation $\VComp{\ast}{I}{M}$ is a representation of $\QComp{\ast}{I}$. Similary, the interval representation $V_I$ associated to the interval $I$ has a compressed representation $\VComp{\ast}{I}{V_I}$.
For example, the interval $I$ in Example~\ref{ex:ssess} is associated to the interval representation
\[
  \phantom{\VComp{ss}{I}{V_I}:}\mathllap{V_I:}
  \begin{tikzcd}[graphstyle]
    K \rar           &0 \rar             &0 \rar            &0 \rar             &0 \rar             &0 \\
    K \rar{1}\uar{1} &K \rar{1}\uar      & {K}\rar\uar      & 0\rar\uar         & 0 \rar \uar       & 0\uar \\
    K\rar{1}\uar{1}  &K \rar{1}\uar{1}   & K \rar{1}\uar{1} & K \rar{1}\uar     & {K} \rar\uar      & 0\uar \\
    0\rar\uar        &{K} \rar{1}\uar{1} & K \rar{1}\uar{1} & K \uar{1} \rar{1} & K \uar{1} \rar{1} & {K}\uar
  \end{tikzcd}
\]
which has ss-compressed representation (a representation of $\QComp{\rss}{I}$):
\[
  \VComp{ss}{I}{V_I}:
  \begin{tikzcd}[graphstyle]
    K                                    &                                                  &   &   &             & \\
                                         &                                                  & K &   &             & \\
    K\ar[rrr,"1"]\ar[uu,"1"]\ar[urr,"1"] &                                                  &   & K &             & \\
                                         &K\ar[rrrr,"1"]\ar[urr,"1"]\ar[uur,near start,"1"] &   &   & \phantom{K} & K \mathrlap{.}
  \end{tikzcd}
\]

While the compressed representation $\VComp{\ast}{I}{M}$ may be interesting in its own right, in the next definition we only consider the multiplicity of $\VComp{\ast}{I}{V_I}$ in $\VComp{\ast}{I}{M}$.

% \begin{definition}[Compressed multiplicities]
%   \label{defn:compressedmultiplicities}
% Let $M$ be a representation of $\Gf{m,n}$ and $I \in \II{m,n}$.
% Define
% \[
% \dbar{\rss}{M}{I}:= \nd{\VComp{\rss}{I}{M}}{\VComp{\rss}{I}{V_I}}, 
% \]
% \[
% \dbar{\rcc}{M}{I}:= \nd{\VComp{\rcc}{I}{M}}{\VComp{\rcc}{I}{V_I}}, 
% \]
% and
% \[
% \dbar{\tot}{M}{I}:= \nd{\VComp{\tot}{I}{M}}{\VComp{\tot}{I}{V_I}}, 
% \]
% where $\nd{?}{\blank}$ is the usual multiplicity function obtained from Theorem~\ref{thm:KS}.
% \end{definition}
\begin{definition}[Compressed multiplicities]
  \label{defn:compressedmultiplicities}
Let $M$ be a representation of $\Gf{m,n}$ and $I \in \II{m,n}$.
Define the source-sink ($\rss$)-compressed multiplicity \EDIT{of $I$ in $M$} as
\[
\dbar{\rss}{M}{I}:= \nd{\VComp{\rss}{I}{M}}{\VComp{\rss}{I}{V_I}}.
\]
While not the main focus of this paper, for completeness we also define the \EDIT{corner-complete ($\rcc$) and total ($\tot$)} compressed multiplicities \EDIT{of $I$ in $M$}
\[
\dbar{\rcc}{M}{I}:= \nd{\VComp{\rcc}{I}{M}}{\VComp{\rcc}{I}{V_I}}, 
\]
and
\[
\dbar{\tot}{M}{I}:= \nd{\VComp{\tot}{I}{M}}{\VComp{\tot}{I}{V_I}}.
\]
In the above, $\nd{?}{\blank}$ is the usual multiplicity function obtained from Theorem~\ref{thm:KS}.
\end{definition}

One motivation for the above definitions is that
we want to compute the multiplicity of an interval module $V_I$ as a direct summand of $M$.
However, as this may not be straightforward,
we instead compute the multiplicity with respect to compressed versions of $M$ and $I$.
The rest of this section is devoted to exploring the consequences of this approach.

\begin{remark}
\label{remark:totkm}
Let
$\mathrm{rk}(M):\mathbf{Con}(P) \rightarrow \mathcal{J}(\mathcal{C})$
be the generalized rank invariant as defined in \cite{kim2018generalized}, applied to the setting we consider.  That is, $P$ is the poset corresponding to the $m\times n$ commutative grid, and the target set
is $\mathcal{J}(\mathcal{C}) = \mathcal{J}(\vect_K)$,
the set
of isomorphism classes of $K$-vector spaces.
By definition $\mathbf{Con}(P)$ is the set of path-connected subposets of $P$, which contains the set of intervals. See \cite{kim2018generalized} for more detailed definitions.
We note that for $I \in \II{m,n}$, the equality
\[
    \dbar{\tot}{M}{I} = \dim \mathrm{rk}(M)(I)
\]
holds.
This follows immediately from Lemma~3.1 of \cite{chambers2018persistent} applied to $\VComp{\tot}{I}{M}$. 
That is, for intervals $I$, the $\tot$-compressed multiplicity coincides with the generalized rank invariant of \cite{kim2018generalized}.
\end{remark}

As the next example shows, the values of
$\dbar{\rss}{M}{I}$ and 
$\dbar{\tot}{M}{I}= \dim \mathrm{rk}(M)(I)$
can be different in general.

\begin{example}
  \label{example:unequal}
Let $M$ be the representation of $\Gf{2,3}$ given by 
\[
  \begin{tikzcd}[ampersand replacement=\&]
    K \rar{\left[\smat{1\\1}\right]} \&
    K^2 \rar{\left[\smat{0 &1}\right]} \&
    K
    \\
    0 \rar \uar \&
    K \rar{1} \uar{\left[\smat{0\\1}\right]}  \&
    K \uar{1}
  \end{tikzcd}
\]
For the interval 
\[
I:
\begin{tikzcd}[graphstyle]
    \bullet \rar &\bullet \rar & \bullet \\
    &\bullet \rar\uar & \bullet \uar 
  \end{tikzcd}
\]
it can be computed that 
$\dbar{\rss}{M}{I} = 1$
while
$\dbar{\tot}{M}{I} = 0$.
\end{example}

\begin{remark}
\label{remark:sskm}
If we allow to change the form of the ``input'' to the function
$\dim \mathrm{rk}(M)(\text{-})$ and broaden its domain of definition, the equality
$
    \dbar{\rss}{M}{I} = \dim \mathrm{rk}(M)(\mathrm{Source}(I) \cup \mathrm{Sink}(I))
$
holds by the same reasoning as the previous remark.
Note that in general, $\mathrm{Source}(I) \cup \mathrm{Sink}(I)$ is not necessarily a path-connected subposet (\cite[Definition 2.16]{kim2018generalized}), and thus the original definition of the generalized rank invariant cannot be used.
That is, the values of the source-sink compressed multiplicity can be expressed as some value of the generalized rank invariant suitably generalized.
\end{remark}

\subsection{Rank invariant and dimension vector as compression}
In this subsection, we show that the compressed multiplicity generalizes the rank invariant \cite{carlsson2009theory}, a well-known invariant for $2$D persistence modules.

Recall that the \emph{rank invariant} is the function assigning to each pair $s,t\in \Gf{m,n}$ with a path from $s$ to $t$, the value
\[
  \rank(M(s\rightarrow t))
\]
where $M(s\rightarrow t):M(s) \rightarrow M(t)$ is the linear map associated by $M$ to a path from $s$ to $t$. Note that this is well-defined due to the commutativity relations imposed on $M$.

An interval $R =\bigsqcup_{i=x}^{y} [b_i,d_i]_i \in \II{m,n}$ is said to be a \emph{rectangle} if 
%$b_i=b_{i+1}$ and $d_i = d_{i+1}$ for any $i=x,\cdots,y-1$.
there exist $b$, $d$ ($1 \leq b \leq d \leq n$) such that $b_i = b$ and $d_i = d$ for any $i=x,\cdots,y$.
That is, 
$R =\bigsqcup_{i=x}^{y} [b,d]_i$.
The set of rectangles in $\II{m,n}$ is denoted by $R_{m,n}$. It is immediate that any rectangle $R$ has a unique source $s$  and a unique sink $t$.
Below is an example of a rectangle together with its source and sink.
\[
  \newcommand{\bb}{\bullet}
  \newcommand{\rb}{{\circledast}}
  R:  
  \begin{tikzcd}[graphstyle,every matrix/.append style={name=m},
    execute at end picture={
      \node[anchor=north east] at (m-4-1.south west){{s}};
      \node[anchor=south west] at (m-1-6.north east){{t}};
      }
    ]
    \bb \rar & \bb \rar & \bb \rar & \bb \rar & \bb \rar & \rb \\
    \bb \rar\uar & \bb \rar\uar & \bb \rar\uar & \bb \rar\uar & \bb \rar\uar & \bb \uar\\
    \bb \rar\uar & \bb \rar\uar & \bb \rar\uar & \bb \rar\uar & \bb \rar\uar & \bb \uar\\
    \rb \rar\uar & \bb \rar\uar & \bb \rar\uar & \bb \rar\uar & \bb \rar\uar & \bb \uar
  \end{tikzcd}
\]
We comment that if $\Gf{m,n}$ is viewed as a subposet of $\mathbb{Z}\times \mathbb{Z}$ with coordinate-wise $\leq$,
the rectangle $R$ is in fact the segment $R = [s,t]$ in the poset $\mathbb{Z}\times \mathbb{Z}$.
In this work, we do not directly use this point of view since we defined $\Gf{m,n}$ as a bound quiver and not as a poset.

Conversely, given any pair $s,t\in \Gf{m,n}$ with a path from $s$ to $t$ (as in the definition of the rank invariant), there is a unique rectangle $R$ with source $s$ and sink $t$. Thus, the rank invariant can be equivalently defined as the function assigning to each rectangle $R$ in $\II{m,n}$ the value $\rank(M(s\rightarrow t))$, where $s$ is the unique source of $R$ and $t$ the unique sink.

Let $R$ be a rectangle with source $s$ and sink $t$. Let us compute the values of the compressed multiplicities at $R$.
\begin{itemize}
\item The ss-compressed category of $R$ is:
  $
  R^{\rss}:
  \begin{tikzcd}[graphstyle]
    s \rar & t
  \end{tikzcd},
  $
  so that $\VComp{\rss}{R}{M}$ is
  $
  \begin{tikzcd}[graphstyle,column sep=3em]
    M(s) \rar{M(s\rightarrow t)} & M(t)
  \end{tikzcd}.
  $
  Note that a linear map $f \colon V \to W$ between finite-dimensional vector spaces is equivalent to
  the direct sum $(K \to 0)^{\dim \ker f} \oplus (K \xrightarrow{1} K)^{\rank f} \oplus(0 \to K)^{\dim \mathrm{coker}\, f}$. Then
  we compute
  \[
    \begin{array}{rcl}
      \dbar{\rss}{M}{R}
      &=& \nd{{
          \left(
          \VComp{\rss}{R}{M}        
          \right)}}
          {{
          \VComp{\rss}{R}{V_R}
          }}\\    
      &=& \nd{{
          \left(
          \begin{tikzcd}[graphstyle,ampersand replacement=\&,column sep=3em]
            M(s) \rar{M(s\rightarrow t)} \& M(t)
          \end{tikzcd}
          \right)}}
          {{
          \begin{tikzcd}[graphstyle, ampersand replacement=\&]
            K \rar{1} \& K
          \end{tikzcd}
          }}\\
      &=& \rank(M(s\rightarrow t)).
    \end{array}
  \]

\item
%%%%%%%%%%%%%%%%%%% added from here %%%%%%%%%%%%
\EDIT{
  Next, let us show that $\dbar{\tot}{M}{R} = \rank(M(s\rightarrow t))$.
  Note that via the equality with the generalized rank invariant (Remark~\ref{remark:totkm}),
  this is already known (see for example, \cite[Example~3.6(iii)]{kim2018generalized}), but for completeness
  we provide a proof.
  % The compressed category of $R$ is exactly $R^\tot$.
%%%%%%%%%%%%%%%%%%%%%%%%%%
% \newcommand{\Ctot}{\mathrm{C}^\tot}
% \newcommand{\dm}{{}}
% \newcommand{\ya}{\xrightarrow}
%%%%%%%%%%%%%%%%%%%%%%%%%%
% Put here $\Ctot(\blank):=\VComp{\tot}{R}{\blank}$ for short.
For simplicity, put here $M' := \VComp{\tot}{R}{M}$.
Then % $\Ctot(M)$
$M'$ is the representation of $R^\tot = R$ obtained by restricting $M$ to the rectangle $R$.
Furthermore,
% $\Ctot(V_R)$
$\VComp{\tot}{R}{V_R}$
is
isomorphic to both the injective indecomposable representation $I(t)$ of $R$ and to the projective indecomposable representation $P(s)$ of $R$ corresponding to the vertex $s$.
% Also in this case, we can prove
% \[
% \begin{aligned}
% \dbar{\tot}{M}{R} &= d_{\Ctot(M)}(V_R)\\
% &= \rank(M(s \to t)).
% \end{aligned}
% \]
% $$
%   \begin{array}{rcl}
%     \dbar{\tot}{M}{R}
%     % &=& \nd{\VComp{\tot}{R}{M}}{V_R}\\
%     &=& \rank(M(s \to t)).
%   \end{array}
% $$
% Indeed, again
By applying \cite[Theorem 3]{Asashiba2017} to $\VComp{\tot}{R}{V_R} \cong I(t)$, we have
\begin{equation}
  \label{eq:d_R(M)}
  % d_{\Ctot(M)}(V_R) = \dim\Hom_{R}(I(t), \Ctot(M)) - \dim\Hom_{R}(I(t)/\soc I(t), \Ctot(M)).
  \dbar{\tot}{M}{R} =
  d_{M'}(I(t)) = \dim\Hom_{R}(I(t), M') - \dim\Hom_{R}(I(t)/\soc I(t), M'),
\end{equation}
where $\soc I(t)$ is the socle of $I(t)$, which is the sum of all simple submodules of $I(t)$ by definition.

Here, the first term is given by
\[
  % \dim\Hom_{R}(I(t), \Ctot(M)) = \dim\Hom_{R}(P(s), \Ctot(M)) = \dim \Ctot(M)(s) = \dim M(s).
  \dim\Hom_{R}(I(t), M') = \dim\Hom_{R}(P(s), M') = \dim M'(s) = \dim M(s).
\]
For the second term, consider the canonical short exact sequence
\[
  0 \to \soc I(t) \xrightarrow{\mu} I(t) \xrightarrow{\varepsilon} I(t)/\soc I(t) \to 0
\]
in the category of representations of $R$.
By applying the (contravariant left-exact) functor $\Hom_{R}(\blank, M')$ to this sequence,
we have
% an exact sequence ($\Hom_{R}(\blank,\blank)$ is abbreviated by $\dm_{R}(\blank, \blank)$)
% \[
%   0 \to
%   \dm_{R}(I(t)/\soc I(t), \Ctot(M)) \ya{\dm_{R}(\varepsilon, \Ctot(M))}
%   \dm_{R}(I(t), \Ctot(M)) \ya{\dm_{R}(\mu,\Ctot(M))}
%   \dm_{R}(\soc I(t), \Ctot(M)).
% \]
% This shows
the first isomorphism in the following calculation:
\[
  \begin{aligned}
    \Hom_{R}(I(t)/\soc I(t), M') &\cong \ker \Hom_{R}(\mu, M')\\
    &= \{f \in \Hom_{R}(I(t), M') \mid f\mu = 0\}\\
    &= \{f \in \Hom_{R}(I(t), M') \mid f(\soc I(t)) = 0\}\\
    &\overset{(\mathrm{a})}{=} \{f \in \Hom_{R}(P(s), M') \mid f(p_{t,s}) = 0\}\\
    &\overset{(\mathrm{b})}{\cong} \{m \in M'(s) \mid M'(p_{t,s})(m) = 0\}\\
    &= \ker M'(p_{t,s}),
  \end{aligned}
\]
where $p_{t,s}$ is the morphism of $R$ given by the path $s \to t$, the equality (a) follows from $\soc I(t) = K p_{t,s}$,
and the isomorphism (b) follows from the canonical isomorphism $\Hom_{R}(P(s), M') \cong M'(s)$.
Then,
\[
  \begin{aligned}
    \dim \ker M'(p_{t,s}) & = \dim M'(s) - \dim\operatorname{Im} M'(p_{t,s})\\
    &= \dim M'(s) - \rank M'(p_{t,s})\\
    &= \dim M(s) - \rank M(s \to t).
  \end{aligned}
\]
% the second term  of \eqref{eq:d_R(M)} is given by
% \[
%   \dim\Hom_A(I(t)/\soc I(t), \Ctot(M)) = \dim M(s) -  \rank M(s \to t).
% \]
Therefore, we have 
\[
  \begin{aligned}
    \dbar{\tot}{M}{R} = d_{M'}(I(t)) &= \dim M(s) - (\dim M(s) -  \rank M(s \to t))\\
    &= \rank M(s \to t)
  \end{aligned}
\]
as claimed.
}

\item
  \EDIT{
    Finally, we show that
    $\dbar{\rcc}{M}{R} =
    \rank(M(s\rightarrow t))$.
  }
  Since $R$ has source $s$ and sink $t$ together with its two other corners (say $u$ and $w$) as its cc-essential vertices, % For example,
% \[
%   \newcommand{\bb}{\bullet}
%   \newcommand{\rb}{\circledast}
%   R:
%   \begin{tikzcd}[graphstyle,every matrix/.append style={name=m},
%     execute at end picture={
%       \node[anchor=north east] at (m-4-1.south west){{s}};
%       \node[anchor=south west] at (m-1-6.north east){{t}};
%       \node[anchor=south east] at (m-1-1.north west){{u}};
%       \node[anchor=north west] at (m-4-6.south east){{w}};
%       }
%     ]
%     \rb \rar & \bb \rar & \bb \rar & \bb \rar & \bb \rar & \rb \\
%     \bb \rar\uar & \bb \rar\uar & \bb \rar\uar & \bb \rar\uar & \bb \rar\uar & \bb \uar\\
%     \bb \rar\uar & \bb \rar\uar & \bb \rar\uar & \bb \rar\uar & \bb \rar\uar & \bb \uar\\
%     \rb \rar\uar & \bb \rar\uar & \bb \rar\uar & \bb \rar\uar & \bb \rar\uar & \rb \uar
%   \end{tikzcd}
% \]
the cc-compressed category of $R$ is:
\[
  \QComp{\rcc}{R}:
  \begin{tikzcd}[graphstyle,every matrix/.append style={name=m},
      execute at end picture={
        % compute box centers:
        \node (c1) at ($(m-1-1.center)!0.5!(m-2-2.center)$) {};
        \foreach \x in {c1}{
          \draw[-{stealth[flex=0.75]}]([shift=(30:0.3em)]\x) arc (30:330:0.3em);
        }
      }]
    u \rar & t \\
    s \rar \uar & w \uar
  \end{tikzcd}
\]
so that $\EDIT{M' := {} }\VComp{\rcc}{R}{M}$ is
$$
  \begin{tikzcd}[graphstyle]
    M(u) \rar & M(t) \\
    M(s) \rar \uar & M(w) \uar
  \end{tikzcd}.
$$
Furthermore, $\VComp{\rcc}{R}{V_R}$ is the injective
indecomposable
representation $I(t)$ associated to the vertex $t$:
$$
I(t)=
  \begin{tikzcd}[graphstyle]
    K \rar{1} & K \\
    K \rar{1} \uar{1}& K \uar{1}
  \end{tikzcd}.
  $$
  % and so
  \EDIT{The proof proceeds as in the total compressed multiplicity case,
    but this time computing over $\QComp{\rcc}{R}$ instead of over $\QComp{\tot}{R} = R$.}
By \cite[Theorem 3 (see also Example 3)]{Asashiba2017}
$$
  \begin{array}{rcl}
    \dbar{\rcc}{M}{R}
    &=& \nd{{
        \left(
        \begin{tikzcd}[graphstyle,ampersand replacement=\&]
            M(u) \rar \& M(t) \\
            M(s) \rar \uar \& M(w) \uar
        \end{tikzcd}
        \right)}}
        {{
        \begin{tikzcd}[graphstyle, ampersand replacement=\&]
            K \rar{1} \& K \\
            K \rar{1} \uar{1} \& K \uar{1}
        \end{tikzcd}
        }}\\
    &=& \dim \Hom_{\QComp{\rcc}{R}} (I(t), \EDIT{M'}) - \dim \Hom_{\QComp{\rcc}{R}} (I(t)/\soc I(t), \EDIT{M'}) \\
    &=& \dim M(s) - (\dim M(s) - \rank(M(s\rightarrow t))) \\
    &=& \rank(M(s\rightarrow t)).
  \end{array}
$$
% In the above, $\soc I(t)$ is the socle of $I(t)$, which is the sum of all simple submodules of $I(t)$ by definition.

\end{itemize}

The above considerations prove the following.
\begin{proposition} \label{prop:rankinv}
Let $M$ be a representation of $\Gf{m,n}$ and $R$ a rectangle.
For $\ast=\rss,\rcc, \tot$, we have
$$
\dbar{\ast}{M}{R} = \rank M(s\to t),
$$
where $s$ is the unique source vertex of $R$ and $t$ is the unique sink vertex of $R$.
\end{proposition}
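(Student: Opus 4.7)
The plan is to treat each of the three cases $\ast = \rss, \rcc, \tot$ separately, exploiting the fact that a rectangle $R$ with unique source $s$ and unique sink $t$ has a very restricted shape of compressed category, so that $\VComp{\ast}{R}{V_R}$ coincides with a distinguished indecomposable whose multiplicity is easy to read off.

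First, I would observe that the source--sink-essential vertices of a rectangle $R$ are exactly $\{s,t\}$, so $\QComp{\rss}{R}$ is the equioriented $A_2$-quiver $s \to t$, and $\VComp{\rss}{R}{M}$ is the single linear map $M(s \to t) : M(s) \to M(t)$. Any linear map between finite-dimensional vector spaces decomposes (up to isomorphism) into a direct sum of copies of the three indecomposables $(K \to 0)$, $(K \xrightarrow{1} K)$, and $(0 \to K)$, with the middle one occurring with multiplicity equal to $\rank M(s \to t)$. Since $\VComp{\rss}{R}{V_R}$ is precisely $(K \xrightarrow{1} K)$, the Krull--Schmidt theorem yields $\dbar{\rss}{M}{R} = \rank M(s \to t)$.

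Next, for $\ast = \rcc$, the corner-complete-essential vertices are the four corners of $R$: the source $s$, the sink $t$, and the other two corners $u$ and $w$. Thus $\QComp{\rcc}{R}$ is a commutative square with $s$ as the unique source and $t$ as the unique sink, and $\VComp{\rcc}{R}{V_R}$ is precisely the indecomposable injective $I(t)$ of this square (it is thin with support everywhere and all maps the identity, so its dual social structure places the simple at $t$ in the socle). The cited result \cite[Theorem~3]{Asashiba2017} expresses the multiplicity of an injective summand via
\[
\nd{N}{I(t)} = \dim \Hom(I(t), N) - \dim \Hom(I(t)/\soc I(t), N)
\]
for $N = \VComp{\rcc}{R}{M}$. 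Evaluating the two hom spaces over the square category, the first equals $\dim M(s)$ (a morphism from $I(t)$ into $N$ is determined by a choice of element in $M(s)$ that is coherently transported to $M(t)$; here any element of $M(s)$ works), while the second equals $\dim M(s) - \rank M(s \to t)$ (the quotient $I(t)/\soc I(t)$ kills the copy at $t$, so a morphism factors through $\ker M(s \to t)$). Subtracting gives $\rank M(s \to t)$.

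Finally, for $\ast = \tot$, the compressed category $\QComp{\tot}{R}$ is the full equioriented commutative grid on the rectangular region, and $\VComp{\tot}{R}{V_R}$ is again the indecomposable injective $I(t)$ of this grid. Exactly the same formula from \cite[Theorem~3]{Asashiba2017} applies, and the same computation of $\dim \Hom(I(t), \VComp{\tot}{R}{M})$ and $\dim \Hom(I(t)/\soc I(t), \VComp{\tot}{R}{M})$ (which only depend on the behavior along the path from $s$ to $t$, via commutativity) yields $\rank M(s \to t)$. The main technical point to verify carefully is the computation of the two hom-space dimensions in the $\rcc$ and $\tot$ cases; once one unpacks that a morphism $I(t) \to N$ is the choice of an element at $s$ whose image in $M(t)$ is unconstrained, and that quotienting by the socle replaces this condition by ``image in $M(t)$ is zero,'' the difference is exactly $\rank M(s\to t)$ by the rank-nullity relation, and no further calculation is needed.
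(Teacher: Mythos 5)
Your proof is correct and takes essentially the same approach as the paper: in the $\rss$ case you reduce to the normal form of a single linear map, and in the $\rcc$ and $\tot$ cases you identify $\VComp{\ast}{R}{V_R}$ as the indecomposable injective $I(t)$ of the compressed category and apply the multiplicity formula from \cite[Theorem~3]{Asashiba2017}, with the same hom-space computations the paper uses.
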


In this sense, the compressed multiplicities $\dbar{\ast}{M}{\blank}$ are generalizations of the rank invariant. With our invariant we hope to capture finer information that cannot be detected by just the rank invariant.

Next, we give an example of representations with the same rank invariants but different compressed multiplicities
for intervals that are not rectangles.
\begin{example}
Let $I = 
  \begin{tikzcd}[graphstyle]
    \bullet
    \rar %{\gamma} 
    & \bullet  \\
    & \bullet 
    \uar[swap] %{\delta}
  \end{tikzcd}
$
be an interval of $\Gf{2,2}= 
  \begin{tikzcd}[graphstyle]
    \bullet 
    \rar%{\gamma} 
    & \bullet  \\
    \bullet 
    \rar%{\alpha} 
    \uar%{\beta} 
    & \bullet 
    \uar[swap]%{\delta}
  \end{tikzcd}
$.
Note that $I$ is not a rectangle.
We consider the following representations of $\Gf{2,2}$:
$$
M=
  \begin{tikzcd}[ampersand replacement=\&]
  K \rar{\left[\smat{1\\0}\right]} \& K^2 \\
  0 \rar \uar \& K \uar[swap]{\left[\smat{1\\0}\right]} 
  \end{tikzcd},\;\;
N =
  \begin{tikzcd}[ampersand replacement=\&]
  K \rar{\left[\smat{1\\0}\right]} \& K^2 \\
  0 \rar \uar \& K \uar[swap]{\left[\smat{0\\1}\right]}
  \end{tikzcd}.
$$
% where $M(\gamma) = N(\gamma)=\begin{pmatrix} 1 \\ 0 \end{pmatrix}$,
% $M(\delta)=\begin{pmatrix} 1 \\ 0 \end{pmatrix}$ and 
% $N(\delta)=\begin{pmatrix} 0 \\ 1 \end{pmatrix}$.
% Since $\rank M(\gamma) = \rank N(\gamma)$ and $\rank M(\delta) = \rank N(\delta)$,
% the
Clearly, rank invariants of $M$ and $N$ coincide.
However, we have $\dbar{\rss}{M}{I} = 1 \not = 0 = \dbar{\rss}{N}{I}$.
\end{example}

% We denote by $V_{m,n}$ the subposet of $\II{m,n}$ consisting of interval subquivers corresponding to each vertex of $\Gf{m,n}$.
% Then we have a hierarchy of posets $$V_{m,n} \subset R_{m,n} \subset \II{m,n}.$$
% The interval subquiver corresponding to $i\in G_0$ is denoted by $I(i)$.
% For any $i \in V_{m,n}$, 
% we have 
% $$
% \dbar{\ast}{M}{I(i)}= \nd{\VComp{\ast}{I(i)}{M}}{\VComp{\ast}{I(i)}{V_{I(i)}}}
% =\nd{M(i)}{K}=\dim M(i).
% $$

We end this subsection with the following observation.
\begin{proposition} \label{prop:dimvec}
Let $M$ be a representation of $\Gf{m,n}$ and $i$ a vertex of $\Gf{m,n}$.
For $\ast=\rss,\rcc, \tot$, we have 
\[
\dbar{\ast}{M}{\{i\}} = \dim M(i),
\]
where $\{i\}$ is the interval subquiver consisting of only the vertex $i$.
\end{proposition}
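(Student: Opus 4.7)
The plan is to reduce all three cases ($\ast = \rss, \rcc, \tot$) to the same trivial computation, since for a singleton interval the three notions of essential vertex collapse to one. First I would observe that when $I = \{i\}$ consists of a single vertex, every vertex of $I$ is automatically both a source and a sink of $I$ (since $I$ has no arrows), so $I^{\rss}_0 = \{i\}$. Then $\pr_1 I^{\rss}_0 \times \pr_2 I^{\rss}_0$ is a single point, so $I^{\rcc}_0 = \{i\}$ as well, and trivially $I^\tot_0 = \{i\}$. Consequently the three compressed categories $\QComp{\rss}{\{i\}}$, $\QComp{\rcc}{\{i\}}$, $\QComp{\tot}{\{i\}}$ all coincide with the full subcategory of $K\Gf{m,n}$ on the single object $i$.

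Next I would identify this one-object category. Its only morphism space is the identity, so it is equivalent to $K$ viewed as a $K$-algebra, and its representation category is canonically equivalent to $\vect_K$. Under this equivalence, the compressed representation $\VComp{\ast}{\{i\}}{M} = M \circ \iota^\ast_{\{i\}}$ corresponds to the vector space $M(i)$, and the compressed interval representation $\VComp{\ast}{\{i\}}{V_{\{i\}}}$ corresponds to $V_{\{i\}}(i) = K$.

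Finally I would appeal to the Krull--Schmidt decomposition in $\vect_K$: every finite-dimensional vector space $V$ decomposes uniquely as $K^{\dim V}$, with $K$ as the unique indecomposable. Therefore
\[
\dbar{\ast}{M}{\{i\}} = \nd{\VComp{\ast}{\{i\}}{M}}{\VComp{\ast}{\{i\}}{V_{\{i\}}}} = \nd{M(i)}{K} = \dim M(i),
\]
which gives the claim for all three values of $\ast$ simultaneously.

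There is essentially no obstacle here; the only point requiring (minor) care is verifying that the three flavors of essential vertex really all reduce to $\{i\}$ for the singleton interval, which follows immediately from the definitions since an isolated vertex has no incident arrows in $I$ and the projections of a one-point set are each one point.
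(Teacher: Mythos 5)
Your proof is correct and is essentially the same as the paper's: the paper dispatches this with ``a direct computation shows'' the chain $\dbar{\ast}{M}{\{i\}} = \nd{M(i)}{K} = \dim M(i)$, and your argument just fills in that computation carefully (all three essential-vertex notions collapse to $\{i\}$, the compressed category is the one-object category with endomorphism algebra $K$, and Krull--Schmidt in $\vect_K$ gives the dimension). The paper also notes an alternative route via Proposition~\ref{prop:rankinv} with the degenerate rectangle $s=t=i$, which you did not use, but that is merely an observed shortcut rather than a different proof.
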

\begin{proof}
  A direct computation shows that
  \[
  \dbar{\ast}{M}{\{i\}}= \nd{\VComp{\ast}{\{i\}}{M}}{\VComp{\ast}{\{i\}}{V_{\{i\}}}}
  =\nd{M(i)}{K}=\dim M(i).
\]
Alternatively, this follows immediately from Proposition~\ref{prop:rankinv} by considering the rectangle with $s=t=i$.
%\qed
\end{proof}
Namely, the compressed multiplicities $\dbar{\ast}{M}{-}$ restricted to vertices coincide with the dimension vector of $M$. 

\subsection{Compression and Inversion}

Next, we derive some basic properties of $\dbar{\ast}{M}{\blank}$, and end this section with Theorem~\ref{thm:interval}, which states that for \emph{interval-decomposable representations} $M$, we can recover the true multiplicity function $d_M$ using $\dbar{\ast}{M}{\blank}$.

First, we start with some Lemmas that lead to a Key Lemma~\ref{lem:key}.
\begin{lemma}\label{lem:dbar-decomp}
If a representation $M$ of $\Gf{m,n}$ decomposes as
$M = M_1 \oplus M_2$, then
$$\dbar{\ast}{M}{I} = \dbar{\ast}{M_1}{I} + \dbar{\ast}{M_2}{I}$$ 
for $\ast=\rss,\rcc,\tot$. 
%\begin{enumerate}
%\item $\dbar{ss}{M}{I} = \dbar{ss}{M_1}{I} + \dbar{ss}{M_2}{I}$ and
%\item $\dbar{cc}{M}{I} = \dbar{cc}{M_1}{I} + \dbar{cc}{M_2}{I}$.
%\end{enumerate}
\end{lemma}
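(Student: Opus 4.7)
The plan is to reduce the statement directly to two already-available facts: the additivity of the compression functor and the additivity of the multiplicity function coming from the Krull--Schmidt theorem.

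First I would unfold the definition: by Definition~\ref{defn:compressedmultiplicities},
\[
\dbar{\ast}{M}{I}=\nd{\VComp{\ast}{I}{M}}{\VComp{\ast}{I}{V_I}},
\]
and similarly for $M_1$ and $M_2$. Since $\VComp{\ast}{I}{\blank}$ is just the restriction functor along the inclusion $\iota^{\ast}_I$ of $\QComp{\ast}{I}$ into $K\Gf{m,n}$, it is additive (this is noted explicitly after Definition~\ref{defn:compressionfunctor}). Hence
\[
\VComp{\ast}{I}{M}=\VComp{\ast}{I}{M_1\oplus M_2}\cong \VComp{\ast}{I}{M_1}\oplus\VComp{\ast}{I}{M_2}.
\]

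Next I would invoke Theorem~\ref{thm:KS} applied to the bound quiver $(Q^{\ast}_I,R^{\ast}_I)$ underlying $\QComp{\ast}{I}$. For any two representations $A,B$ of this bound quiver and any indecomposable $L$, one has $d_{A\oplus B}(L)=d_A(L)+d_B(L)$, since combining Krull--Schmidt decompositions $A\cong\bigoplus_L L^{d_A(L)}$ and $B\cong\bigoplus_L L^{d_B(L)}$ yields a Krull--Schmidt decomposition $A\oplus B\cong\bigoplus_L L^{d_A(L)+d_B(L)}$, and uniqueness of the multiplicity function forces the identity. Taking $A=\VComp{\ast}{I}{M_1}$, $B=\VComp{\ast}{I}{M_2}$, and $L=\VComp{\ast}{I}{V_I}$ gives
\[
\dbar{\ast}{M}{I}=\dbar{\ast}{M_1}{I}+\dbar{\ast}{M_2}{I},
\]
as desired.

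There is no real obstacle here: the only point that deserves a line of comment is that $\VComp{\ast}{I}{V_I}$ is indeed an indecomposable representation of $\QComp{\ast}{I}$ so that the Krull--Schmidt multiplicity makes sense at it; but even without indecomposability the additivity $d_{A\oplus B}(L)=d_A(L)+d_B(L)$ can be read off termwise on the Krull--Schmidt decompositions, so the argument goes through uniformly for $\ast=\rss,\rcc,\tot$ with no case distinction.
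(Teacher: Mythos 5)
Your proof is correct and takes essentially the same approach as the paper: additivity of the compression functor $\VComp{\ast}{I}{\blank}$ followed by the Krull--Schmidt theorem applied over $\QComp{\ast}{I}$. The paper's proof is terser, but it invokes exactly the same two facts in the same order; your write-up just unfolds the Krull--Schmidt additivity of the multiplicity function explicitly.
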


\begin{proof}
%Put $\ast=$ss, cc.
Since the compression functor $\VComp{\ast}{I}{\blank}$ is additive, we have $\VComp{\ast}{I}{M} = \VComp{\ast}{I}{M_1} \oplus \VComp{\ast}{I}{M_2}$.
Then the statement follows by the Krull-Schmidt theorem.
%\qed
\end{proof}

\begin{lemma}\label{lem:dbar-sen}
Let $I, J$ be intervals of $\Gf{m,n}$.
Then
\[
\dbar{\ast}{V_J}{I} = %\dbar{ss}{V_J}{I} =
\begin{cases}
1 & \text{if } J \in U(I) \quad(\text{i.e.}~I \leq J),\\
0 & \text{otherwise}.
\end{cases}
\]
for $\ast=\rss,\rcc,\tot$. 
\end{lemma}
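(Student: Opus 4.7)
The plan is to handle the two cases $I\leq J$ and $I\not\leq J$ separately, treating all three compressions $\ast\in\{\rss,\rcc,\tot\}$ in a uniform manner.

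Suppose first that $I \leq J$, i.e.\ $I_0 \subseteq J_0$. I claim that $\VComp{\ast}{I}{V_J} \cong \VComp{\ast}{I}{V_I}$ and that this representation is indecomposable, which then forces $\dbar{\ast}{V_J}{I} = 1$. For the isomorphism: every essential vertex $x \in I^{\ast}_0$ lies in $J_0$, so $V_J(x) = K = V_I(x)$; and for every morphism $\mu\colon x \to y$ in $\QComp{\ast}{I}$, which is a path in $\Gf{m,n}$ between two vertices of $J$, convexity of $J$ forces $\mu$ to lie in $J$, whence $V_J(\mu) = \mathrm{id}_K = V_I(\mu)$. Matching up these $K$'s and identities gives an isomorphism in $\rep \QComp{\ast}{I}$. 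For indecomposability, observe that $\VComp{\ast}{I}{V_I}$ is a thin representation with every morphism acting as $\mathrm{id}_K$, so any endomorphism is a family of scalars $(f_x)_{x\in I^\ast_0}$ satisfying $f_x = f_y$ whenever a morphism links $x$ and $y$ in $\QComp{\ast}{I}$. If the essential vertex set is zigzag-connected through such morphisms, then all $f_x$ coincide and $\End \cong K$, yielding indecomposability.

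Suppose next that $I \not\leq J$. By the contrapositives of Lemma~\ref{lemcmp:ssess} and Lemma~\ref{lemcmp:ccess} (and the trivial analog in the $\tot$ case), there exists an essential vertex $x \in I^{\ast}_0 \setminus J_0$. Then $\VComp{\ast}{I}{V_J}(x) = V_J(x) = 0$ while $\VComp{\ast}{I}{V_I}(x) = K$. If $\VComp{\ast}{I}{V_I}$ occurred even once as a direct summand of $\VComp{\ast}{I}{V_J}$, comparing dimensions at $x$ would yield $0 \geq 1$, a contradiction. Hence $\dbar{\ast}{V_J}{I} = 0$.

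The main obstacle is the zigzag-connectedness step needed for indecomposability in the $\rss$ and $\rcc$ cases. In the $\tot$ case this is immediate from connectedness of $I$ as an undirected graph, since consecutive vertices along a path in $I$ are linked by an arrow of $\QComp{\tot}{I}$. For $\rss$ and $\rcc$ one must exploit the staircase presentation $I = \bigsqcup_{i=s}^{t}[b_i,d_i]_i$: the sources live at positions $(i,b_i)$ where $b_i < b_{i+1}$ (or $i=t$), and the sinks at positions $(i,d_i)$ where $d_i > d_{i-1}$ (or $i=s$); between two consecutive steps the source of one step and the sink of the other are joined in $\Gf{m,n}$ by an up-right path, which provides a morphism in $\QComp{\rss}{I}\subseteq\QComp{\rcc}{I}$, and these morphisms chain all essential vertices together. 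Executing this combinatorial check along the steps, using the staircase inequalities \eqref{eq:intervalcond}, completes the proof.
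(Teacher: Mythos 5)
Your proof follows the same two-case skeleton as the paper's: when $I\le J$ the compressed representations coincide, giving multiplicity $1$, and when $I\not\le J$ an essential vertex of $I$ outside $J$ shows the compressed $V_J$ has a zero at a vertex where the compressed $V_I$ has $K$, so the multiplicity is $0$. You go one step further than the paper by explicitly justifying that $\VComp{\ast}{I}{V_I}$ is indecomposable, which is indeed what turns ``the compressed representations are equal'' into ``the multiplicity is exactly $1$''; the paper treats this as folklore and leaves it implicit. That extra care is a genuine improvement, and your observation that the endomorphism ring is $K$ as soon as the essential vertices are linked through nonzero morphisms of the compressed category is the right mechanism.

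Two small corrections to what you wrote. First, in the isomorphism step you invoke only convexity of $J$ to get $V_J(\mu)=\mathrm{id}_K$; you also need convexity of $I$ to get $V_I(\mu)=\mathrm{id}_K$, since $\mu$ is a path between vertices of $I$ (both facts are immediate, but both are used). Second, in the staircase sketch the index direction is reversed relative to the paper's convention $b_{i+1}\le b_i\le d_{i+1}\le d_i$: with this convention $(i,b_i)$ is a source precisely when $i=s$ or $b_i<b_{i-1}$, and $(i,d_i)$ is a sink precisely when $i=t$ or $d_i>d_{i+1}$, whereas you wrote $b_i<b_{i+1}$ and $d_i>d_{i-1}$, which never hold. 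Also, for $\ast=\rcc$ one should additionally note that every cc-essential vertex $(i,j)$ shares its row $i$ with some ss-essential vertex and hence is comparable to it, so connectedness of the ss-essential set propagates to the cc-essential set. None of these issues undermine the argument; the approach is correct and matches the paper, with welcome added detail.
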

\begin{proof}
%Put $\ast=$ss, cc.
If $I \le J$, then $\VComp{\ast}{I}{V_J} = \VComp{\ast}{I}{V_I}$, thus
$\dbar{\ast}{V_J}{I} = 1$.

On the other hand, if $I \not\le J$, then there exists some $i \in I^{\ast}_0 \setminus J_0$ by Lemma~\ref{lemcmp:ssess} or Lemma~\ref{lemcmp:ccess} for $\ast=\rss,\rcc$, respectively, and by the fact that $I^\tot_0 = I_0$, for $\ast = \tot$. Thus, $i \in \supp(\VComp{\ast}{I}{V_I})$ but $i \not\in \supp(\VComp{\ast}{I}{V_J})$. This means that
 $\VComp{\ast}{I}{V_J}$ does not have a direct summand isomorphic to
$\VComp{\ast}{I}{V_I}$, showing that $\dbar{\ast}{V_J}{I} = 0$.
%\qed
\end{proof}

\begin{lemma}[Key Lemma] \label{lem:key}
Let $M$ be an interval-decomposable representation of $\Gf{m,n}$ and $I$ an interval in $\II{m,n}$.
Then 
\[
\dbar{\ast}{M}{I} = \sum_{J \in U(I)} \nd{M}{V_J} %= \dbar{cc}{M}{I}.
\]
for $\ast=\rss,\rcc,\tot$.
\end{lemma}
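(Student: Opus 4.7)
The plan is to reduce the statement directly to the two lemmas immediately preceding it, \textbf{Lemma~\ref{lem:dbar-decomp}} (additivity of $\dbar{\ast}{\blank}{I}$ under direct sums) and \textbf{Lemma~\ref{lem:dbar-sen}} (the value of $\dbar{\ast}{V_J}{I}$ for an interval representation). Since $M$ is assumed to be interval-decomposable, we have a Krull--Schmidt decomposition of the form
\[
M \;\cong\; \bigoplus_{J \in \II{m,n}} V_J^{\,\nd{M}{V_J}},
\]
where only finitely many summands are nonzero.

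Next, I would apply \textbf{Lemma~\ref{lem:dbar-decomp}} inductively (or, what amounts to the same thing, invoke additivity of $\VComp{\ast}{I}{\blank}$ together with Krull--Schmidt) to push the compressed multiplicity through the direct sum:
\[
\dbar{\ast}{M}{I} \;=\; \sum_{J \in \II{m,n}} \nd{M}{V_J} \cdot \dbar{\ast}{V_J}{I}.
\]
Then I would substitute the value given by \textbf{Lemma~\ref{lem:dbar-sen}}: the summand $\dbar{\ast}{V_J}{I}$ equals $1$ when $J \in U(I)$ (i.e.\ when $I \le J$) and vanishes otherwise. This immediately collapses the sum to
\[
\dbar{\ast}{M}{I} \;=\; \sum_{J \in U(I)} \nd{M}{V_J},
\]
which is the desired identity, valid uniformly for $\ast = \rss, \rcc, \tot$.

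There is essentially no obstacle: the content is already in the two lemmas, and the only thing to check is that the additivity in \textbf{Lemma~\ref{lem:dbar-decomp}} applies to arbitrary finite direct sums, which is clear by induction on the number of summands using Krull--Schmidt. The hypothesis that $M$ is interval-decomposable is used in exactly one place, namely to guarantee that every indecomposable summand of $M$ is of the form $V_J$ for some $J \in \II{m,n}$, so that \textbf{Lemma~\ref{lem:dbar-sen}} is applicable to every summand. Without interval-decomposability, non-interval indecomposables would contribute additional terms to $\dbar{\ast}{M}{I}$, and the identity would generally fail; this is consistent with the role of this lemma as one direction of the eventual characterization of interval-decomposability via compressed multiplicities.
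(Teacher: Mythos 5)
Your argument is correct and is essentially identical to the paper's proof: decompose $M$ into interval summands via Krull--Schmidt, apply Lemma~\ref{lem:dbar-decomp} to distribute $\dbar{\ast}{\blank}{I}$ over the direct sum, and then use Lemma~\ref{lem:dbar-sen} to evaluate each term. Your additional remark on where interval-decomposability is used is a nice clarification but does not change the substance.
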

\begin{proof}
Let $M \cong \bigoplus\limits_{J \in \II{m,n}} V_{J}^{d_M(V_J)}$ be an interval decomposition of a representation $M$ of $\Gf{m,n}$.
Then
\[
  \dbar{\ast}{M}{I} = \sum_{J \in \II{m,n}} d_M(V_J)\cdot\dbar{\ast}{V_{J}}{I} =
  \sum_{ J \in U(I)} \nd{M}{V_J}
\]
by Lemmas \ref{lem:dbar-decomp} and \ref{lem:dbar-sen}.
%\qed
\end{proof}
As a consequence, in the case that $M$ is interval-decomposable, $\dbar{\ast}{M}{I}$ does not depend on $\ast$. 

Readers familiar with the M\"obius theory for (locally-finite) posets \cite{rota1964foundations} may recognize that Lemma~\ref{lem:key} simply states that for interval-decomposable representations, the function $\dbar{\ast}{M}{\blank}$ is equal to $d_M(\blank)$ multiplied by the zeta function. Theorem~\ref{thm:interval} below can then be seen as an application of M\"obius inversion.  Here, we give a direct proof of Theorem~\ref{thm:interval} and delay these M\"obius-theoretic considerations to a later section.

First, we note the following proposition which follows immediately from Lemma~\ref{lem:key}.
\begin{proposition}\label{prp:cmpd_M}
Let $M$ be an interval-decomposable representation of $\Gf{m,n}$ and $I$ an interval in $\II{m,n}$.
Then 
$$
\nd{M}{V_I} = \dbar{\ast}{M}{I} - \sum_{ J \in U(I) \backslash \{I\}} \nd{M}{V_J}.
%=\dbar{cc}{M}{I} - \sum_{ J \in U(I) \backslash \{I\}} \nd{M}{V_J}.
$$
for $\ast=\rss,\rcc,\tot$.
\end{proposition}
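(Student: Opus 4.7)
The plan is to derive this proposition as an immediate consequence of the Key Lemma~\ref{lem:key}. The Key Lemma gives the identity
\[
\dbar{\ast}{M}{I} = \sum_{J \in U(I)} \nd{M}{V_J}
\]
for any interval-decomposable $M$ and any $I \in \II{m,n}$. Since the order $\leq$ on $\II{m,n}$ is reflexive, we have $I \leq I$, hence $I \in U(I)$. Thus I would split the sum by isolating the $J=I$ term:
\[
\dbar{\ast}{M}{I} = \nd{M}{V_I} + \sum_{J \in U(I)\setminus\{I\}} \nd{M}{V_J},
\]
and then simply rearrange to solve for $\nd{M}{V_I}$, which yields the claimed formula.

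The only step requiring any verification is that $I$ genuinely lies in $U(I)$, which follows directly from the definition of $U(I)$ in Section~\ref{sec:background} together with reflexivity of the order on $\II{m,n}$ (i.e.\ $I \subseteq I$). No obstacle is anticipated; the statement is essentially the Key Lemma rewritten to isolate the top-degree term, phrased this way because in Section~\ref{sec:approximation} we will want to read the multiplicity $\nd{M}{V_I}$ as being recovered from the compressed multiplicity $\dbar{\ast}{M}{I}$ by subtracting off contributions from strictly larger intervals—this is the combinatorial precursor to the M\"obius inversion formula alluded to just before the proposition.
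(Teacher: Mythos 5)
Your proof is correct and is exactly the argument the paper intends: the paper states the proposition ``follows immediately from Lemma~\ref{lem:key}'', and your rearrangement of the Key Lemma (isolating the $J=I$ term, which is present since $I\in U(I)$ by reflexivity) is precisely that immediate consequence.
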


\begin{theorem}[For interval-decomposables, compressed multiplicity recovers the multiplicity] \label{thm:interval}
Let $M$ be an interval decomposable representation of $\Gf{m,n}$ and $I$ an interval in $\II{m,n}$. Then:
\[  
    \nd{M}{V_I} = \dbar{\ast}{M}{I} + \sum\limits_{\smat{\emptyset \neq S \subseteq \Cov (I)}}(-1)^{\# S} \dbar{\ast}{M}{\bigvee S}.
\]
for $\ast=\rss,\rcc,\tot$.
\end{theorem}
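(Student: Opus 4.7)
The plan is to derive this from the Key Lemma~\ref{lem:key} by a standard inclusion--exclusion argument over $\Cov(I)$. Set $f(J) := \nd{M}{V_J}$ and $g(J) := \dbar{\ast}{M}{J}$, so Lemma~\ref{lem:key} reads $g(J) = \sum_{K \in U(J)} f(K)$ for every interval $J$. Separating the summand $K=I$ from the rest, the identity in the theorem is equivalent to
\[
\sum_{K \in U(I) \setminus \{I\}} f(K) \;=\; \sum_{\emptyset \neq S \subseteq \Cov(I)} (-1)^{|S|+1}\, g\!\left(\textstyle\bigvee S\right),
\]
which recasts Proposition~\ref{prp:cmpd_M} with the multiplicities on the right-hand side eliminated.

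The central combinatorial step is to rewrite the index set on the left as a union. Because $\II{m,n}$ is a graded poset (Proposition~\ref{prop:gradedposet}), every $K$ with $K > I$ contains a cover of $I$: one picks any vertex $v \in K_0 \setminus I_0$ which extends $I$ to an interval, as in the proof of Proposition~\ref{prop:gradedposet} restricted to the pair $(I, K)$. Thus $\{K : K > I\} = \bigcup_{C \in \Cov(I)} U(C)$, and finite inclusion--exclusion applied to the counting function $K \mapsto f(K)$ gives
\[
\sum_{K > I} f(K) \;=\; \sum_{\emptyset \neq S \subseteq \Cov(I)} (-1)^{|S|+1} \sum_{K \in \bigcap_{C \in S} U(C)} f(K).
\]

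The remaining task is the identification $\bigcap_{C \in S} U(C) = U\!\left(\bigvee S\right)$, after which Lemma~\ref{lem:key} applied to $\bigvee S$ immediately turns the inner sum into $g(\bigvee S)$ and completes the proof. For this one only needs $\bigvee S$ to be well defined and to have the universal property $K \geq \bigvee S \iff K \geq C$ for all $C \in S$. Since all elements of $S$ lie in the segment $[I, \Gf{m,n}]$ where $\Gf{m,n}$ is the maximum of $\II{m,n}$ (it is itself an interval subquiver), Proposition~\ref{prop:locallattice} guarantees that $[I, \Gf{m,n}]$ is a lattice and hence that $\bigvee S$ exists with the required property.

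The main obstacle I anticipate is exactly this last point: since $\II{m,n}$ is not globally a lattice (Example~\ref{ex:notlat}), one must justify that $\bigvee S$ appearing in the statement is meaningful. The local-lattice property applied to the top segment $[I, \Gf{m,n}]$ resolves it cleanly, but it is worth articulating explicitly so that the identification $\bigcap_{C \in S} U(C) = U(\bigvee S)$ is not a global-lattice statement in disguise. Once this is in place, the rest is bookkeeping with the Key Lemma and the binomial sign pattern $(-1)^{|S|+1}$ versus $(-1)^{|S|}$.
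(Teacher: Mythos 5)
Your proposal is correct and follows essentially the same route as the paper's proof: it rewrites Key Lemma~\ref{lem:key} to isolate $d_M(V_I)$, observes $U(I)\setminus\{I\}=\bigcup_{C\in\Cov(I)}U(C)$, applies inclusion--exclusion, identifies $\bigcap_{C\in S}U(C)=U(\bigvee S)$ via the local-lattice property of $\II{m,n}$ (Proposition~\ref{prop:locallattice}), and applies Lemma~\ref{lem:key} again at $\bigvee S$. The only cosmetic difference is that the paper phrases inclusion--exclusion in terms of a finite measure $f$ on $2^{U(I)}$ while you apply it directly to the counting function $K\mapsto d_M(V_K)$; your explicit remark that all elements of $\Cov(I)$ lie in the segment $[I,\Gf{m,n}]$, so that $\bigvee S$ is well defined there, is exactly the justification the paper implicitly invokes.
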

\begin{proof}
  We define the function $f \colon 2^{U(I)} \to \bbZ$ by $f(S):= \sum\limits_{J \in S}d_M(V_J)$ for $S \in 2^{U(I)}$, where $2^{U(I)}$ is the power set of $U(I)$.
  Rewriting Proposition \ref{prp:cmpd_M}, we have
  \[
    d_M(V_I) = \dbar{\ast}{M}{I} - f\left(\bigcup_{J \in \Cov (I)}U(J)\right)
  \]
  since $U(I)\setminus\{I\} = \bigcup_{J \in \Cov (I)}U(J)$.
  Here, the inclusion-exclusion principle\footnote{More precisely, we use the inclusion-exclusion principle for finite measures, where we note that $(U(I), 2^{U(I)}, f)$ is a finite measure space.} shows that
  \[
    f\left(\bigcup_{J \in \Cov (I)}U(J)\right) =
    \sum_{\emptyset \neq S \subseteq \Cov (I)}(-1)^{(\# S - 1)}f\left(\bigcap_{J \in S} U(J)\right).
  \]
  By Proposition~\ref{prop:locallattice}, the join $\bigvee S$ in $U(I)$ exists, and it can be checked that
  \[
    \bigcap\limits_{J \in S} U(J) = U(\bigvee S)
  \]
  by definition.
  Therefore
  \[
    f\left(\bigcap\limits_{J \in S} U(J)\right) = f(U(\bigvee S)) =
    \dbar{\ast}{M}{\bigvee S}
  \]
  by Lemma \ref{lem:key}, which completes our proof.
%\qed
\end{proof}

Theorem~\ref{thm:interval} says that to calculate $\nd{M}{V_I}$, it is enough to calculate $\dbar{\rss}{M}{J}$ (which is equal to $\dbar{\rcc}{M}{J}$ and also to $\dbar{\tot}{M}{J}$ since $M$ is interval-decomposable) for certain intervals $J$. We warn that the assumption that $M$ is interval-decomposable is necessary for Key Lemma~\ref{lem:key}, and so is also necessary here. It is easy to construct examples where the equality in Theorem~\ref{thm:interval} fails for non-interval-decomposable representations.

\begin{example}
  Let us follow the proof of Theorem~\ref{thm:interval} by computing a particular example.
  Let $M$ be an interval-decomposable representation of $\Gf{2,4}$ and let
  $I = \left(\smat{  0 & 1 & 1 & 0 \\ 0 & 1 & 1 & 0  }\right) \in \II{2,4}$, an interval.
  In this case, 
  \[
    \Cov (I) = \left\{
      I_1 := \left(\smat{  1 & 1 & 1 & 0 \\ 0 & 1 & 1 & 0  }\right),
      I_2 :=\left(\smat{  0 & 1 & 1 & 0 \\ 0 & 1 & 1 & 1  }\right)
    \right\}
  \]
  and 
  $
    I_1 \vee I_2 = \left(\smat{  1 & 1 & 1 & 0 \\ 0 & 1 & 1 & 1  }\right). 
  $
  By Lemma~\ref{lem:key}, we have
  \EDIT{
\[  
\begin{array}{rcl}
  \dbar{\ast}{M}{I} &=& \sum\limits_{J\in U(I)} d_M(V_J) \\
                    &=& \nd{M}{\left(\smat{  0 & 1 & 1 & 0 \\ 0 & 1 & 1 & 0  }\right)}
                       +\nd{M}{\left(\smat{  1 & 1 & 1 & 0 \\ 0 & 1 & 1 & 0  }\right)}
                       +\nd{M}{\left(\smat{  0 & 1 & 1 & 0 \\ 0 & 1 & 1 & 1  }\right)} \\
                    & & {}+\nd{M}{\left(\smat{  1 & 1 & 1 & 0 \\ 1 & 1 & 1 & 0  }\right)} 
                        +\nd{M}{\left(\smat{  1 & 1 & 1 & 0 \\ 0 & 1 & 1 & 1  }\right)} 
                        +\nd{M}{\left(\smat{  0 & 1 & 1 & 1 \\ 0 & 1 & 1 & 1  }\right)} \\
                    && {}+\nd{M}{\left(\smat{  1 & 1 & 1 & 0 \\ 1 & 1 & 1 & 1  }\right)}
                       +\nd{M}{\left(\smat{  0 & 1 & 1 & 1 \\ 0 & 1 & 1 & 1  }\right)} 
                       +\nd{M}{\left(\smat{  1 & 1 & 1 & 0 \\ 1 & 1 & 1 & 1  }\right)} \\
                    &=& \nd{M}{V_I} + \sum\limits_{J\in (U(I_1) \cup  U(I_2))} \nd{M}{V_J}) \\
                    &=& \nd{M}{V_I} + \sum\limits_{J \in U(I_1)}\nd{M}{V_J} +
                        \sum\limits_{J\in U(I_2)}\nd{M}{V_J} -
                        \sum\limits_{J\in (U(I_1) \cap U(I_2))} \nd{M}{V_J} \\
                    &=& \nd{M}{V_I} + \sum\limits_{J \in U(I_1)}\nd{M}{V_J} +
                        \sum\limits_{J\in U(I_2)}\nd{M}{V_J} -
                        \sum\limits_{J\in U(I_1 \vee I_2)} \nd{M}{V_J}.
\end{array}
\]
}
We thus have 
\[
  \nd{M}{V_I} = \dbar{\ast}{M}{I} - \dbar{\ast}{M}{I_1} - \dbar{\ast}{M}{I_2} + \dbar{\ast}{M}{I_1 \vee I_2}
\]
which is also given by Theorem~\ref{thm:interval}.
\end{example}

As another example, let us consider the equioriented $A_n$-type quiver, which can be viewed as $\Gf{1,n}$. In this setting, Theorem~\ref{thm:interval} reduces to the following well-known formula related to inclusion-exclusion. In fact, this perspective of using the inclusion-exclusion formula figured heavily in the early development of persistence diagrams, before the definition using indecomposables. See for example \cite{landi1997new}, \cite{frosini1999size}, \cite{robins1999towards}, \cite{cohen2005stability}, \cite{edelsbrunner2008persistent}, %Definition~3.2 of
\cite{chazal2009proximity}, \cite{cerri2016hausdorff} and others.

\begin{corollary}
  Let $M \in \rep \Gf{1,n}$. 
  For $\intv[i,j]$ an interval representation of $\Gf{1,n}$,
  \[
    \begin{array}{rcl}
      \nd{M}{\intv[i,j]} &=& \left[\rank M((i-1)\rightarrow (j+1)) - \rank M((i-1)\rightarrow j)\right] - \\
                         && \left[\rank M(i\rightarrow (j+1)) - \rank M(i\rightarrow j)\right],
    \end{array}
  \]
  where if $i-1$ and/or $j+1$ is not in $\Gf{1,n}$, the corresponding term above is $0$.
\end{corollary}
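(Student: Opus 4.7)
The plan is to apply Theorem~\ref{thm:interval} directly to the representation $M\in \rep\Gf{1,n}$ and the interval $I = \intv[i,j]$, then convert each compressed multiplicity into a rank via Proposition~\ref{prop:rankinv}. Since $\Gf{1,n}$ coincides with $\Af{n}$, which is of finite representation type with only interval indecomposables (by Gabriel's theorem, as already noted in the background), every $M \in \rep \Gf{1,n}$ is interval-decomposable, so Theorem~\ref{thm:interval} applies.

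First I would identify $\Cov(\intv[i,j])$ inside $\II{1,n}$. Since every interval subquiver of $\Gf{1,n}$ is a single contiguous segment, the only way to add a vertex is to extend at one end; thus
\[
\Cov(\intv[i,j]) = \{\intv[i-1,j],\ \intv[i,j+1]\},
\]
where each element is dropped when it falls outside $\{1,\dots,n\}$. Moreover, every interval in $\II{1,n}$ is a rectangle, and the join over $\intv[i,j]$ of the two covers is
\[
\intv[i-1,j] \vee \intv[i,j+1] = \intv[i-1,j+1].
\]
Plugging this into Theorem~\ref{thm:interval} yields
\[
\nd{M}{\intv[i,j]} = \dbar{\ast}{M}{\intv[i,j]} - \dbar{\ast}{M}{\intv[i-1,j]} - \dbar{\ast}{M}{\intv[i,j+1]} + \dbar{\ast}{M}{\intv[i-1,j+1]},
\]
with the convention that any term involving an out-of-range endpoint is absent (equivalently, $0$), which corresponds exactly to the missing cover in $\Cov(\intv[i,j])$ and any joins it would have participated in.

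Next I would apply Proposition~\ref{prop:rankinv}: for each rectangle $\intv[a,b]$, the unique source is $a$, the unique sink is $b$, and $\dbar{\ast}{M}{\intv[a,b]} = \rank M(a \to b)$. Substituting this identity for each of the four compressed multiplicities above and regrouping yields
\[
\nd{M}{\intv[i,j]} = \bigl[\rank M((i-1)\to(j+1)) - \rank M((i-1)\to j)\bigr] - \bigl[\rank M(i\to(j+1)) - \rank M(i\to j)\bigr],
\]
which is the claimed formula. The boundary convention that terms with $i-1 = 0$ or $j+1 = n+1$ vanish is inherited from the vanishing of the corresponding cover term in Theorem~\ref{thm:interval}.

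There is essentially no obstacle: the content of the corollary is entirely packaged in Theorem~\ref{thm:interval} together with Proposition~\ref{prop:rankinv}. The only non-trivial verification is the straightforward description of $\Cov(\intv[i,j])$ in the linear poset $\II{1,n}$ and the identification of its join in the local lattice, both of which are immediate from the staircase description of intervals.
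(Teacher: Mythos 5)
Your proof is correct and takes essentially the same route as the paper's: both identify $\Cov(\intv[i,j])=\{\intv[i-1,j],\intv[i,j+1]\}$ (dropping out-of-range entries), invoke interval-decomposability to apply Theorem~\ref{thm:interval}, and translate the four compressed multiplicities into ranks via $\dbar{\ast}{M}{\intv[a,b]} = \rank M(a\to b)$. The only cosmetic difference is that you cite Proposition~\ref{prop:rankinv} for this last step while the paper states it ``follows immediately from the definition,'' which is the same observation since every interval in $\II{1,n}$ is a rectangle.
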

\begin{proof}
  In $\Gf{1,n}$, it follows immediately from the definition that
  \[
    \dbar{\ast}{M}{\intv[i,j]} = \rank M(i \rightarrow j)
  \]
  for $\ast = \rss,\rcc,\tot$.  Furthermore, 
  $\Cov(\intv[i,j])$ contains  $\intv[i-1,j]$ if $i-1 \in \Gf{1,n}$ and contains $\intv[i,j+1]$ if $j+1 \in \Gf{1,n}$, and no other elements.

  It is well-known that all representations of $\Gf{1,n}$ are interval-decomposable, and thus Theorem~\ref{thm:interval} is applicable. Thus,
  \[
    \begin{array}{rcl}
      \nd{M}{\intv[i,j]} &=& \dbar{\ast}{M}{\intv[i,j]} \\
                         && {} - \dbar{\ast}{M}{\intv[i-1,j]} - \dbar{\ast}{M}{\intv[i,j+1]} \\
                         && {} + \dbar{\ast}{M}{\intv[i-1,j+1]},
    \end{array}
  \]
  where if $i-1$ and/or $j+1$ is not in $\Gf{1,n}$, the corresponding term above is $0$.
  Expanding and rearranging terms gives us the required expression.  
%\qed
\end{proof}

We note that the same formula has been obtained by using Auslander-Reiten theory in the paper \cite{Asashiba2017} (Equation~(9) of \cite{Asashiba2017}). Our Theorem~\ref{thm:interval} here uses only the local lattice structure of $\II{m,n}$, and it may be interesting to explore   Theorem~\ref{thm:interval} using Auslander-Reiten theory, and more generally, a representation-theoretic perspective.

%%% Local Variables:
%%% mode: latex
%%% TeX-master: "main"
%%% End:

\subsection{Restriction to equioriented $2\times n$ commutative grid}
\label{subsec:2byn}

In this subsection, we study the special case of $\Gf{2,n}$, which is the equioriented commutative ladder.  In this setting, the compressed categories take on very nice forms.

\begin{proposition} \label{prop:ss-fin}
  Let $I \in \II{2,n}$. The quiver of the ss-compressed category $\QComp{ss}{I}$ has one of the following forms:
  \begin{enumerate}
  \item $\bullet$,
  \item $\begin{tikzcd}[graphstyle]
  \bullet \rar & \bullet
  \end{tikzcd}
  $,
  \item $\begin{tikzcd}[graphstyle]
  \bullet \rar & \bullet & \bullet \lar
  \end{tikzcd}
  $,
  \item $\begin{tikzcd}[graphstyle]
  \bullet  &\lar \bullet \rar & \bullet 
  \end{tikzcd}
  $,
  \item $\begin{tikzcd}[graphstyle]
  \bullet \rar & \bullet & \bullet \lar \rar & \bullet
  \end{tikzcd}
  $.
  \end{enumerate}
\end{proposition}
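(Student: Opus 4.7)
The plan is a straightforward case analysis based on the staircase form of $I$ recalled earlier. An element $I \in \II{2,n}$ is either supported on a single row, $I = [b,d]_i$ for $i \in \{1,2\}$, or on both rows as $I = [b_1,d_1]_1 \sqcup [b_2,d_2]_2$ with $b_2 \le b_1 \le d_2 \le d_1$. The key tool is the following observation about the path-category $K\Gf{m,n}$ with full commutativity relations: the Hom space from a vertex $(i,j)$ to a vertex $(i',j')$ is one-dimensional when $i \le i'$ and $j \le j'$, and zero otherwise. Once the ss-essential vertices of $I$ are identified, the shape of the quiver of the full subcategory $\QComp{\rss}{I}$ is thus completely determined by this coordinate criterion.

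For a single-row interval $[b,d]_i$, the only source is $(i,b)$ and the only sink is $(i,d)$; these coincide when $b=d$, yielding case~(1), and are distinct with a unique arrow between them when $b<d$, yielding case~(2).

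For a two-row interval $I = [b_1,d_1]_1 \sqcup [b_2,d_2]_2$, I split into four subcases depending on whether $b_1 = b_2$ or $b_1 > b_2$, and whether $d_1 = d_2$ or $d_1 > d_2$. A direct inspection of incoming and outgoing arrows inside $I$ shows that $(1,b_1)$ is always a source of $I$ and $(2,d_2)$ is always a sink, while $(2,b_2)$ is an additional source iff $b_1 > b_2$ and $(1,d_1)$ is an additional sink iff $d_1 > d_2$. Combining this with the coordinate criterion then yields:
\begin{itemize}
\item $b_1 = b_2$, $d_1 = d_2$ (rectangle): two essential vertices and one arrow, quiver~(2);
\item $b_1 = b_2$, $d_1 > d_2$: three essential vertices and two arrows, quiver~(4), since the strict inequality $d_1 > d_2$ rules out the would-be arrow from $(1,d_1)$ to $(2,d_2)$;
\item $b_1 > b_2$, $d_1 = d_2$: three essential vertices and two arrows, quiver~(3), by dual reasoning;
\item $b_1 > b_2$, $d_1 > d_2$: four essential vertices and exactly three arrows, quiver~(5), since both strict inequalities forbid the two potential diagonal arrows between the two sources and the ``opposite'' sinks.
\end{itemize}
These subcases exhaust every possibility, so the proposition follows. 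There is no substantial obstacle: the only point requiring care is verifying in each subcase that the strict inequalities indeed forbid the diagonal morphisms, which is immediate from the coordinate criterion for Hom in $K\Gf{m,n}$.
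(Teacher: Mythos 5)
Your case analysis is correct and is essentially just the ``direct computation'' that the paper itself cites as the entire proof: identify the staircase form, classify the sources and sinks, and use that $\Hom_{K\Gf{m,n}}((i,j),(i',j'))$ is one-dimensional when $i\le i'$ and $j\le j'$ and zero otherwise. One small point you could make explicit for rigor: to conclude that each nonzero morphism between essential vertices really is an \emph{arrow} of the quiver (rather than a composition), you should note that in every subcase all potential intermediate Hom spaces vanish, so the radical-squared of the compressed category is zero and every nonzero non-identity morphism is irreducible. You implicitly use this (e.g.\ in case~(5), the diagonal $(1,b_1)\to(2,d_2)$ cannot factor through $(1,d_1)$ or $(2,b_2)$ because both intermediate Homs are zero), but spelling it out closes the only gap between your argument and a fully detailed proof.
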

\begin{proof}
  A direct computation shows this.
  %\qed
\end{proof}

Similarly, we have the following.
\begin{proposition} \label{prop:cc-fin}
  Let $I \in \II{2,n}$.
  The bound quiver of the cc-compressed category $\QComp{cc}{I}$ has one of the following forms:
  \begin{multicols}{2}
  \begin{enumerate}
  \item $\bullet$,
  \item $\begin{tikzcd}[graphstyle]
  \bullet \rar & \bullet
  \end{tikzcd}
  $,
  \item $\begin{tikzcd}[graphstyle]
  \bullet \rar & \bullet & \bullet \lar
  \end{tikzcd}
  $,
  \item $\begin{tikzcd}[graphstyle]
  \bullet  &\lar \bullet \rar & \bullet 
  \end{tikzcd}
  $,
  \item $\begin{tikzcd}[graphstyle]
  \bullet \rar & \bullet & \bullet \lar \rar & \bullet
  \end{tikzcd}
  $.
  \item $
    \begin{tikzcd}[graphstyle,every matrix/.append style={name=m},
      execute at end picture={
        % compute box centers:
        \node (c1) at ($(m-1-1.center)!0.5!(m-2-2.center)$) {};        
        \foreach \x in {c1}{
          \draw[-{stealth[flex=0.75]}]([shift=(30:0.3em)]\x) arc (30:330:0.3em);
        }
      }]
      \bullet \rar & \bullet \\
      \bullet \rar\uar & \bullet \uar
    \end{tikzcd}
    $, 
  \item $
    \begin{tikzcd}[graphstyle,every matrix/.append style={name=m},
      execute at end picture={
        % compute box centers:
        \node (c1) at ($(m-1-2.center)!0.5!(m-2-3.center)$) {};        
        \foreach \x in {c1}{
          \draw[-{stealth[flex=0.75]}]([shift=(30:0.3em)]\x) arc (30:330:0.3em);
        }
      }]
      \bullet \rar &\bullet \rar & \bullet \\
      &\bullet \rar\uar & \bullet \uar 
    \end{tikzcd}$,
  \item $
    \begin{tikzcd}[graphstyle,every matrix/.append style={name=m},
      execute at end picture={
        % compute box centers:
        \node (c1) at ($(m-1-1.center)!0.5!(m-2-2.center)$) {};        
        \foreach \x in {c1}{
          \draw[-{stealth[flex=0.75]}]([shift=(30:0.3em)]\x) arc (30:330:0.3em);
        }
      }]
      \bullet \rar & \bullet \\
      \bullet \rar\uar & \bullet \rar \uar & \bullet 
    \end{tikzcd}
    $,
  \item \label{item:biggestcc} $
    \begin{tikzcd}[graphstyle,every matrix/.append style={name=m},
      execute at end picture={
        % compute box centers:
        \node (c1) at ($(m-1-2.center)!0.5!(m-2-3.center)$) {};        
        \foreach \x in {c1}{
          \draw[-{stealth[flex=0.75]}]([shift=(30:0.3em)]\x) arc (30:330:0.3em);
        }
      }]
      \bullet \rar&\bullet \rar & \bullet \\
      &\bullet \rar\uar & \bullet \rar\uar & \bullet 
    \end{tikzcd}
    $.
  \end{enumerate}
\end{multicols}
\end{proposition}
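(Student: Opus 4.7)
The plan is to exhaust all possibilities for $I \in \II{2,n}$ using the staircase parameterization and to compute the cc-essential vertices in each case. By the staircase characterization of interval subquivers of $\Gf{m,n}$, either $I$ is contained in a single row (so $I = [b,d]_i$ with $i \in \{1,2\}$ and $1 \leq b \leq d \leq n$), or $I$ has vertices in both rows, namely $I = [b_1,d_1]_1 \sqcup [b_2,d_2]_2$ with $b_2 \leq b_1 \leq d_2 \leq d_1$. The single-row case is immediate: a direct check of sources and sinks yields $I^{\rss}_0 = I^{\rcc}_0 = \{(i,b),(i,d)\}$, which gives form (1) if $b = d$ and form (2) if $b < d$.

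For the two-row case I would first identify the ss-essential vertices by a direct inspection of incoming and outgoing arrows of $I$. It is routine to verify that the candidates for sources are $(1,b_1)$, which is always a source, and $(2,b_2)$, which is a source if and only if $b_2 < b_1$; dually, the candidates for sinks are $(2,d_2)$, always a sink, and $(1,d_1)$, a sink if and only if $d_2 < d_1$. Next, I would form the cc-essential set $(\pr_1 I_0^{\rss} \times \pr_2 I_0^{\rss}) \cap I_0$ by checking for each candidate $(i,x)$ with $i \in \{1,2\}$ and $x \in \{b_1,b_2,d_1,d_2\}$ whether $x$ lies in the row-$i$ interval; the staircase inequalities make each such check a one-line verification. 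The arrows and the commutativity relations in the cc-compressed category are then inherited from $K\Gf{2,n}$ via Definition~\ref{dfn:ss--cc}.

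The case split then proceeds on two Boolean parameters---whether $b_2 < b_1$ and whether $d_2 < d_1$---yielding four principal configurations, each further refined by whether $b_1 < d_2$ (this controls whether the ``interior'' vertices $(2,b_1)$ and $(1,d_2)$ are distinct from the outer corners $(2,b_2),(2,d_2),(1,b_1),(1,d_1)$). The rectangle case ($b_2 = b_1$ and $d_2 = d_1$) with $b_1 < d_2$ produces form (6); the most generic two-row configuration ($b_2 < b_1$, $d_2 < d_1$, $b_1 < d_2$) produces form (9); the remaining configurations (exactly one Boolean true, or the degenerate equality $b_1 = d_2$) produce forms (3)--(5), (7), and (8). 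The main obstacle is simply the bookkeeping in the degenerate subcases where several essential vertices coincide (notably $b_1 = d_2$, or when a row collapses to a single vertex), but these reduce mechanically to the nondegenerate pictures by collapsing the identified vertices, so no new ideas are needed beyond careful enumeration.
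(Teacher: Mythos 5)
Your proof is correct and follows the same approach the paper uses; the paper's own proof is just a two-sentence sketch (``at most $6$ cc-essential vertices, arranged in the shape of (9); the rest are degenerations'') and your case analysis is the fleshed-out version of that same argument. One very small omission in your enumeration: the two-row case $b_2 = b_1 = d_2 = d_1$ (a vertical pair, both Booleans false and $b_1 = d_2$) also produces form (2), which your taxonomy of ``remaining configurations'' producing forms (3)--(5), (7), (8) does not explicitly list, though it is implicitly handled by your closing remark about collapsing identified vertices.
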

\begin{proof}
% For a positive integer $m$, 
% the cc-compression category of the commutative ladder of type $2\times m$ has the quiver with relations as same as the commutative ladder of type $2\times 2$. 
% Thus, a direct computation shows this.
  It is immediate that there are at most $6$ cc-essential vertices, arranged in the shape of \ref{item:biggestcc}, for an interval in $\II{2,n}$. The rest of the forms cover the cases where some of those vertices are not cc-essential in $I$.
  %\qed
\end{proof}

%the quiver with relations of the compressed category $\QComp{}{I}$ is the quiver $(I,R)$ with relations, 
%where $R$ is the set of full commutativity relations.
%Then 
%Almost all $\QComp{\tot}{I}$ is of infinite representation type for enough large $n$.
For $I\in \II{2,n}$ with $n \geq 5$, $\QComp{\tot}{I}$ is of infinite representation type
(see \cite[Theorem~1.3]{bauer2020cotorsion} or \cite{escolar2016persistence} for example).
Therefore, it may be difficult to calculate the values $\dbar{\tot}{M}{I}$.

On the other hand, Proposition~\ref{prop:ss-fin} and Proposition~\ref{prop:cc-fin} show that  
$\QComp{\rss}{I}$ and $\QComp{\rcc}{I}$ are of finite type for any $I\in\II{2,n}$.
In addition, the Auslander-Reiten quivers for the bound quivers in the lists of Proposition~\ref{prop:ss-fin} and Proposition~\ref{prop:cc-fin} can be calculated explicitly.
Thus, it is {\em not} difficult to calculate the values $\dbar{\ast}{M}{I}$ for $\ast = \rss,\rcc$, in the setting of the equioriented $2\times n$ commutative grid.

We discuss more about computations in Section~\ref{sec:algorithm}.

\section{\EDIT{Interval-decomposable replacement}}
\label{sec:approximation}

In this section, let us discuss how to use the above ideas % as an approximation of
\EDIT{for replacing a}
general $2$D persistence modules in $\rep\Gf{m,n}$ by an interval-decomposable one. First, let us rephrase Theorem~\ref{thm:interval} using the language of M\"obius inversion, as discussed in Subsection~\ref{subsec:mobius}, with underlying field $F=\mathbb{R}$.

We can view $d_M$ and $\dbarfun{\ast}{M}$ as functions $\II{m,n}\rightarrow \mathbb{R}$ (taking only nonnegative integer values). For $d_M$, this is an abuse of notation, since $d_M$ is a function from (isomorphism classes of) \emph{all} indecomposables, but here we are using the symbol to denote it restricted to the interval representations of $\Gf{m,n}$, identified with the set of intervals $\II{m,n}$.
% Furthermore, $d_M$ and $\dbarfun{\ast}{M}$ take on only nonnegative integer values in $\mathbb{R}$.

In the notation of Subsection~\ref{subsec:mobius}, we have $d_M, \dbarfun{\ast}{M} \in \mathbb{R}^{\II{m,n}}$. Then, the Key Lemma~\ref{lem:key} states that for $M$ interval-decomposable,
\begin{equation}
  \label{eq:key_function}
  \dbarfun{\ast}{M} = \zeta d_M
\end{equation}
where the multiplication of $\zeta$ in Eq.~\eqref{eq:key_function} is precisely the left action of $I(\II{m,n})$ on $\mathbb{R}^{\II{m,n}}$. 
By M\"obius inversion (multiplication of $\mu = \zeta^{-1}$), we obtain
\begin{equation}
  \label{eq:mobiusrephrase}
  d_M = \mu \dbarfun{\ast}{M}.
\end{equation}

This expresses $d_M$ in terms of $\dbarfun{\ast}{M}$, a conclusion similar to the one of  Theorem~\ref{thm:interval}. % However, it is not automatic that the coefficients appearing in Theorem~\ref{thm:interval} gives the values of the M\"obius function $\mu([I,J])$ of $\II{m,n}$. In this case however, we are able to confirm this, which we do below.
Next, we show that the coefficients appearing in Theorem~\ref{thm:interval} gives the values of the M\"obius function $\mu([I,J])$ of $\II{m,n}$. 
 \begin{definition}
   \label{defn:muprime}
   Define the function $\mu':\Seg(\II{m,n})\rightarrow \mathbb{R}$, an element of the incidence algebra $I(\II{m,n})$ by the following.
   \begin{equation}
     \label{eq:mobius}
     \mu'([I,J]) =
       \begin{cases}
         1 & \text{if } I = J,\\
         \displaystyle\sum\limits_{\substack{J={\bigvee S}\\ \emptyset \neq S \subseteq \Cov(I)}} (-1)^{\# S} & \text{otherwise.}
       \end{cases}
   \end{equation} 
 \end{definition}
 Note that in the case $I\neq J$ and where there is no $\emptyset \neq S \subseteq \Cov(I)$ such that $J=\bigvee S$, the sum above is empty, and thus $\mu'([I,J])= 0$. The values of $\mu'$ are exactly the coefficients appearing in the formula of Theorem~\ref{thm:interval}, from which we immediately get the following Corollary. 
 \begin{corollary}[Restatement of Theorem~\ref{thm:interval}] \label{cor:restate}
   Let $M$ be an interval-decomposable representation of $\Gf{m,n}$ and $I$ an interval in $\II{m,n}$. Then:
   \[  
     d_M = \mu' \dbarfun{\ast}{M}
   \]
   for $\ast=\rss,\rcc,\tot$.
 \end{corollary}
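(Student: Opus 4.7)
The statement is essentially a direct translation of Theorem~\ref{thm:interval} into the language of the incidence algebra introduced in Subsection~\ref{subsec:mobius}. The plan is to unfold the left action of $\mu' \in I(\II{m,n})$ on $\dbarfun{\ast}{M} \in \mathbb{R}^{\II{m,n}}$ at an arbitrary $I \in \II{m,n}$, and verify that it agrees term-by-term with the expression given by Theorem~\ref{thm:interval}.

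Fixing $I \in \II{m,n}$, I would apply the definition of the left action to write
\[
  (\mu' \dbarfun{\ast}{M})(I) = \sum_{J \in \II{m,n},\, I \leq J} \mu'([I,J])\, \dbarfun{\ast}{M}(J),
\]
and then split this sum into the $J = I$ contribution and the $J > I$ contribution. The first case of Definition~\ref{defn:muprime} gives $\mu'([I,I]) = 1$, which accounts for the term $\dbarfun{\ast}{M}(I)$ appearing in Theorem~\ref{thm:interval}. For the remaining terms, the second case expresses $\mu'([I,J])$ as a sum of $(-1)^{\#S}$ over nonempty $S \subseteq \Cov(I)$ with $\bigvee S = J$ (where the join is taken over $I$, which is well-defined by Proposition~\ref{prop:locallattice}).

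The key step is then a swap of summation order: instead of indexing by $J > I$ and then, for each $J$, by the subsets $S \subseteq \Cov(I)$ with $\bigvee S = J$, I would index directly by the nonempty subsets $S \subseteq \Cov(I)$, setting $J := \bigvee S$. This yields
\[
  \sum_{J > I} \mu'([I,J])\,\dbarfun{\ast}{M}(J)
  = \sum_{\emptyset \neq S \subseteq \Cov(I)} (-1)^{\#S}\,\dbarfun{\ast}{M}\!\left(\bigvee S\right),
\]
which, combined with the $J = I$ term, gives precisely the right-hand side of the formula in Theorem~\ref{thm:interval}. Since that theorem states this expression equals $\nd{M}{V_I} = d_M(I)$ for every $I \in \II{m,n}$, the identity $d_M = \mu' \dbarfun{\ast}{M}$ in $\mathbb{R}^{\II{m,n}}$ follows.

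There is no real obstacle here beyond bookkeeping; the only subtlety is to make the reindexing in the swap of summations rigorous, namely to observe that every nonempty $S \subseteq \Cov(I)$ contributes its sign $(-1)^{\#S}$ to $\mu'([I,\bigvee S])$ and that this accounts for all nonzero values of $\mu'([I,J])$ for $J > I$. Distinct subsets may produce the same join, but Definition~\ref{defn:muprime} sums all their signs into a single coefficient $\mu'([I,J])$, so regrouping is lossless and the identity reduces to Theorem~\ref{thm:interval} term-by-term.
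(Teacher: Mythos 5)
Your proposal is correct and follows essentially the same route the paper takes: the paper observes that the values of $\mu'$ are by construction the coefficients in the formula of Theorem~\ref{thm:interval} and declares the corollary immediate, while you spell out the bookkeeping (splitting off the $J = I$ term, then reindexing the $J > I$ terms by nonempty $S \subseteq \Cov(I)$ via $J = \bigvee S$). The reindexing is valid because every nonempty $S \subseteq \Cov(I)$ has $\bigvee S > I$, and Definition~\ref{defn:muprime} already aggregates all subsets with the same join into a single coefficient, so the regrouping is lossless exactly as you say.
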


 \begin{theorem}
   \label{thm:mobiusequal}
   Let $\mu'$ be as defined in Definition~\ref{defn:muprime}, and $\mu$ be the M\"obius function of the poset $\II{m,n}$. Then,
   \[
     \mu = \mu'.
   \]
   In particular, Equation~\eqref{eq:mobius} gives the values of $\mu$.
 \end{theorem}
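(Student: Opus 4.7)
The plan is to exploit the standard characterization of the Möbius function as the unique convolution inverse of $\zeta$ in the incidence algebra $I(\II{m,n})$. That is, $\mu$ is the unique element of $I(\II{m,n})$ satisfying $\mu * \zeta = \delta$, or equivalently,
\[
\sum_{I \leq K \leq J} \mu([I,K]) = \delta([I,J]) \quad \text{for all } I \leq J.
\]
Thus it suffices to verify that $\mu'$ satisfies the same identity, and then invoke uniqueness of inverses in $I(\II{m,n})$.

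To do this, I would test the already-proven inversion formula (Corollary~\ref{cor:restate}) against the interval representations themselves. Fix $J \in \II{m,n}$ and consider the interval representation $V_J$; it is trivially interval-decomposable with multiplicity function $d_{V_J}(K) = \delta([K,J])$. Corollary~\ref{cor:restate} therefore yields the identity $d_{V_J} = \mu'\, \dbarfun{\ast}{V_J}$ in $\mathbb{R}^{\II{m,n}}$.

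Next, I would evaluate both sides at an arbitrary $I \in \II{m,n}$ and substitute the value of $\dbarfun{\ast}{V_J}$ from Lemma~\ref{lem:dbar-sen}, namely $\dbarfun{\ast}{V_J}(K) = 1$ if $K \in U(I)$ and $K \leq J$, and $0$ otherwise. Expanding the left action gives
\[
\delta([I,J]) = d_{V_J}(I) = \sum_{I \leq K} \mu'([I,K])\, \dbarfun{\ast}{V_J}(K) = \sum_{I \leq K \leq J} \mu'([I,K]),
\]
which is exactly the relation $\mu' * \zeta = \delta$ in $I(\II{m,n})$. By uniqueness of the inverse of $\zeta$, we conclude $\mu' = \mu$.

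There is essentially no hard step here; the work has been front-loaded into Theorem~\ref{thm:interval}/Corollary~\ref{cor:restate} and Lemma~\ref{lem:dbar-sen}. The only point requiring care is ensuring that the collection of functions $\{\dbarfun{\ast}{V_J}\}_{J \in \II{m,n}}$ produced by the interval representations probes the incidence algebra action richly enough to pin down $\mu'$, and this is automatic since the equations $\sum_{I \leq K \leq J} \mu'([I,K]) = \delta([I,J])$ obtained by varying $J \geq I$ already recursively determine $\mu'$ on every segment containing $I$.
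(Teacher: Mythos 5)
Your argument is correct, and it is a genuine (if modest) streamlining of the paper's proof. Both proofs probe the inversion formula of Corollary~\ref{cor:restate} with the single interval module $V_J$ and then unwind the action using Lemma~\ref{lem:dbar-sen}; the resulting identity $\sum_{I \le K \le J}\mu'([I,K]) = \delta([I,J])$ is the same in both cases. Where you diverge is in how this identity is exploited: the paper additionally invokes Equation~\eqref{eq:mobiusrephrase} to write the analogous identity for $\mu$, and then performs an explicit induction over $L$ in $[I,L]$ to cancel the lower-order terms and compare $\mu'([I,L])$ with $\mu([I,L])$ one coefficient at a time. You instead observe that the identity you derived is precisely the statement $\mu' * \zeta = \delta$ in the incidence algebra $I(\II{m,n})$, and since $\zeta$ is a unit with two-sided inverse $\mu$, the chain $\mu' = \mu'*\zeta*\mu = \delta*\mu = \mu$ finishes the proof at once. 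This buys you two things: you never need the $\mu$-version of the inversion formula, and you do not need to run the induction, since the uniqueness of the inverse encapsulates exactly the recursion the paper's induction unrolls. One cosmetic note: your closing paragraph about the family $\{\dbarfun{\ast}{V_J}\}_{J}$ probing the action ``richly enough'' is redundant once you have $\mu'*\zeta = \delta$ and invertibility of $\zeta$; the uniqueness of the inverse in a ring is all that is required, so you could safely drop it.
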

 \begin{proof}
   Let $I \leq L$ be intervals in $\II{m,n}$.
   Below, we compare the values $\mu([I,L])$ and $\mu'([I,L])$ by induction on $L$.

First, let us consider $L$ a cover of $I$ and fix $M=V_L$.
By Corollary~\ref{cor:restate} and Equation~\ref{eq:mobiusrephrase}, we have
\[
  \mu' \dbarfun{\ast}{M} = \mu \dbarfun{\ast}{M}.
\]
We obtain the following sequence of equations by working on both sides the equation.
\[
\begin{array}{rcl}
  (\mu' \dbarfun{\ast}{M})(I) &=& (\mu \dbarfun{\ast}{M})(I)
  \\ 
  \sum\limits_{I\leq J} \mu'([I,J]) \dbarfun{\ast}{M}(J)  &=& \sum\limits_{I\leq J} \mu([I,J]) \dbarfun{\ast}{M}(J)
  \\
  \sum\limits_{I\leq J\leq L} \mu'([I,J]) \dbarfun{\ast}{M}(J)  &=& \sum\limits_{I\leq J\leq L} \mu([I,J]) \dbarfun{\ast}{M}(J)
  \\
  \mu'([I,I]) + \mu'([I,L])  &=& \mu([I,I]) + \mu([I,L])
  \\
  1 + \mu'([I,L])  &=& 1 + \mu([I,L]),
\end{array}
\]
where going from the second line to the third line follows by Lemma~\ref{lem:dbar-sen}.
We conclude $\mu'([I,L])  =  \mu([I,L])$ for any $L \in \Cov (I)$. 

Next, 
%let $a$ be a positive integer.
we assume that for any interval $L^{\prime}$ with $L^{\prime} < L$,
%positive integer $b\leq a$ and any $b$-cover $L^{\prime}$ of $I$, 
$\mu'([I,L^{\prime}]) = \mu([I,L^{\prime}])$.
%For any $(a+1)$-cover $L$ of $I$, 
Then we have the following sequence of equations by taking $M=V_L$ and again using Lemma~\ref{lem:dbar-sen}:
\[
\begin{array}{rcl}
  (\mu' \dbarfun{\ast}{M})(I) &=& (\mu \dbarfun{\ast}{M})(I)
  \\ 
  \sum\limits_{I\leq J\leq L} \mu'([I,J])   &=& \sum\limits_{I\leq J\leq L} \mu([I,J])
  \\
  \sum\limits_{I\leq J < L} \mu'([I,J])  + \mu'([I,L])  &=& \sum\limits_{I\leq J < L} \mu([I,J]) + \mu([I,L]).
\end{array}
\]
Since we have $\sum\limits_{I\leq J < L} \mu'([I,J]) =\sum\limits_{I\leq J < L} \mu([I,J]) $ by the inductive assumption, 
we obtain $\mu'([I,L])=\mu([I,L])$.
By the induction, we get the conclusion.
%\qed
\end{proof}

As we have seen, $d_M = \mu \dbarfun{\ast}{M}$ for $M$ interval-decomposable.
Even in the case where $M$ is not interval-decomposable, we nevertheless can do the transformation. Thus we \emph{define}
$ \aprx{\ast}{M} := \mu \dbarfun{\ast}{M}$ in general.
% Explicitly, it can be seen that $\aprx{\ast}{M}$ is given by the following.

\begin{definition}
  \label{defn:dtilde}
  Put $\ast=\rss,\rcc,\tot$. Define $\aprx{\ast}{M} := \mu \dbarfun{\ast}{M}$. In particular, for each $I\in \II{m,n}$ an interval subquiver of $\Gf{m,n}$,
  \[
    \aprx{\ast}{M}(I):= \dbar{\ast}{M}{I} + \sum\limits_{\emptyset \not= S \subseteq \Cov (I)} (-1)^{\# S} \dbar{\ast}{M}{\bigvee S}.
  \]
\end{definition}

First, we note the following obvious property of $\aprx{\ast}{M} (\blank)$.
\begin{lemma}
  \label{lem:dtilde-decomp}
  If $M \cong M_1 \oplus M_2$, then we have 
  \[
  \aprx{\ast}{M} (\blank) = \aprx{\ast}{M_1} (\blank) + 
  \aprx{\ast}{M_2} (\blank).
  \]
\end{lemma}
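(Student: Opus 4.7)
The plan is to reduce this immediately to the already-established additivity of the compressed multiplicity (Lemma~\ref{lem:dbar-decomp}) combined with the linearity of the formula defining $\aprx{\ast}{M}$. Concretely, I would fix an arbitrary $I \in \II{m,n}$ and expand $\aprx{\ast}{M}(I)$ using Definition~\ref{defn:dtilde}: it is a finite $\mathbb{Z}$-linear combination of values of the form $\dbar{\ast}{M}{J}$, where $J$ ranges over $\{I\} \cup \{\bigvee S \mid \emptyset \neq S \subseteq \Cov(I)\}$. By Lemma~\ref{lem:dbar-decomp}, each such value splits as $\dbar{\ast}{M}{J} = \dbar{\ast}{M_1}{J} + \dbar{\ast}{M_2}{J}$, and substituting this term by term, then regrouping the $M_1$- and $M_2$-summands separately, yields exactly $\aprx{\ast}{M_1}(I) + \aprx{\ast}{M_2}(I)$.

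A cleaner way to phrase the same argument, which I would actually present, is via the M\"obius-inversion interpretation already introduced in this section. Lemma~\ref{lem:dbar-decomp} can be rephrased as the equality $\dbarfun{\ast}{M_1 \oplus M_2} = \dbarfun{\ast}{M_1} + \dbarfun{\ast}{M_2}$ in $\mathbb{R}^{\II{m,n}}$. Since $\aprx{\ast}{\blank} = \mu\, \dbarfun{\ast}{\blank}$ by Definition~\ref{defn:dtilde} (using Theorem~\ref{thm:mobiusequal} to identify the coefficients with the values of $\mu$), and since the left action of the incidence algebra $I(\II{m,n})$ on $\mathbb{R}^{\II{m,n}}$ is $\mathbb{R}$-linear in the second argument, we conclude
\[
\aprx{\ast}{M_1 \oplus M_2} = \mu\,\dbarfun{\ast}{M_1 \oplus M_2} = \mu\,\dbarfun{\ast}{M_1} + \mu\,\dbarfun{\ast}{M_2} = \aprx{\ast}{M_1} + \aprx{\ast}{M_2}.
\]

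There is no real obstacle: the statement is a routine formal consequence of additivity of the Krull--Schmidt multiplicities under direct sums (which passes to compressed multiplicities because each compression functor $\VComp{\ast}{I}{\blank}$ is additive) together with the linearity of M\"obius inversion. The only thing to be careful about is invoking the correct version of $\ast$ uniformly on both sides, but since Lemma~\ref{lem:dbar-decomp} holds for $\ast = \rss, \rcc, \tot$ alike, no case analysis is required.
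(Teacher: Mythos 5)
Your proposal is correct and is essentially the same argument as the paper's: both reduce the claim to the additivity of the compressed multiplicity (Lemma~\ref{lem:dbar-decomp}) and then observe that the defining formula for $\aprx{\ast}{M}$ (equivalently, left-multiplication by $\mu$ in the incidence algebra) is linear. The paper's proof is just a one-line version of your first phrasing.
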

\begin{proof}
  Since $\dbar{\ast}{M}{\blank} = \dbar{\ast}{M_1}{\blank}+\dbar{\ast}{M_2}{\blank}$ by Lemma~\ref{lem:dbar-decomp}, we have the desired equation by definition.
  %\qed
\end{proof}

Since in general
\[
  M \cong \bigoplus\limits_{X \in \calL} X^{d_M(X)}
\]
by Theorem~\ref{thm:KS} (where $d_M$ is the actual multiplicity function, not restricted to intervals),
one way of constructing an interval-decomposable object is to naively define
\begin{equation}
  \label{eq:naive}
  \aprxM{\ast}{M} = \bigoplus\limits_{I\in\II{m,n}} \left(V_I\right)^{\aprx{\ast}{M}(I)}
\end{equation}
by taking the function $\aprx{\ast}{M}$ on $\II{m,n}$ as a substitute for the function $d_M$ on $\calL$. Defined this way, $M \cong \aprxM{\ast}{M}$ for interval-decomposable $M$. However, the value $\aprx{\ast}{M}(I)$ can be negative in general, and thus the direct sum in Eq.~\eqref{eq:naive} does not make sense.

For example, we have the following.
\begin{example}
\label{ex:negativedtilde}
Let $M$ be the representation of $\Gf{2,3}$ given by 
\[
  \begin{tikzcd}[ampersand replacement=\&]
    K \rar{\left[\smat{1\\1}\right]} \&
    K^2 \rar{\left[\smat{0 &1}\right]} \&
    K
    \\
    0 \rar \uar \&
    K \rar{1} \uar{\left[\smat{0\\1}\right]}  \&
    K \uar{1}
  \end{tikzcd}
\]
The value of $\aprx{\rss}{M}(I)$ is $0$ except in the cases of $I$ being one of the intervals $I_1,I_2,I_3,I_4$ given below.
\begin{enumerate}
\item For $I_1:
  \begin{tikzcd}[graphstyle]
    \bullet \rar &\bullet &  \\
    &\bullet \rar\uar & \bullet 
  \end{tikzcd}$, $\aprx{\rss}{M} (I_1) =-1$,
\item
  For $I_2:
  \begin{tikzcd}[graphstyle]
    \bullet \rar &\bullet \rar & \bullet \\
    &\bullet \rar\uar & \bullet \uar 
  \end{tikzcd}$, $\aprx{\rss}{M}(I_2) = 1$,
\item 
  For $I_3:
  \begin{tikzcd}[graphstyle]
    \phantom{\bullet}&\bullet &  \\
    &\bullet \rar\uar & \bullet 
  \end{tikzcd}$, $\aprx{\rss}{M}(I_3) =1$,
\item 
  For $I_4:
  \begin{tikzcd}[graphstyle]
    \bullet \rar &\bullet &\phantom{\bullet}   \\
    &\phantom{\bullet} & \phantom{\bullet} 
  \end{tikzcd}$%\todo{TYPO!!!!}
  , $\aprx{\rss}{M}(I_4) =1$.
\end{enumerate}
\end{example}
\begin{proof}
  We directly use Definition~\ref{defn:dtilde} to compute $\aprx{\rss}{M} (I_1)$. We let $\Cov(I_1) = \{I_2,I_5\}$, and let $I_6 = I_2 \vee I_5$, where the intervals are given below. We first compute the value of the compressed multiplicity $\dbar{\rss}{M}{\blank}$ of these intervals.
  We have:
  \[
    I_1:
    \begin{tikzcd}[graphstyle]
      \bullet \rar &\bullet &  \\
      &\bullet \rar\uar & \bullet 
    \end{tikzcd},
    \ \dbar{\rss}{M}{I_1}= 0,
  \]  
  \[
    I_2:
    \begin{tikzcd}[graphstyle]
      \bullet \rar &\bullet \rar & \bullet \\
      &\bullet \rar\uar & \bullet \uar 
    \end{tikzcd},
    % \ \udim (V_{I_2}) = \begin{pmatrix} 1 & 1 & 1 \\ 0& 1 & 1 \end{pmatrix}, 
    \ \dbar{\rss}{M}{I_2} = 1,
  \]
  \[
    I_5:
    \begin{tikzcd}[graphstyle]
      \bullet \rar &\bullet &  \\
      \bullet \rar \uar &\bullet \rar\uar & \bullet  
    \end{tikzcd},    
    \ \dbar{\rss}{M}{I_5} = 0,
  \]
    \[
    I_6:
    \begin{tikzcd}[graphstyle]
      \bullet \rar &\bullet \rar &  \bullet\\
      \bullet \rar \uar &\bullet \rar\uar & \bullet\uar
    \end{tikzcd},    
    \ \dbar{\rss}{M}{I_6} = 0.
  \]
  Thus, by definition,
  \[
    \aprx{\rss}{M}(I_1) = 0 -1 -0 + 0 = -1.
  \]

  The other computations follow similarly.
  %\qed
\end{proof}

For $M$ interval-decomposable, it is clear from the above that all values of $\aprx{\ast}{M}$ are nonnegative, as it is equal to $d_M$ itself. In the next example we see that the converse does not hold, and so we cannot use the nonnegativity of $\aprx{\ast}{M}$ to check for interval-decomposability.
\begin{example}[Continuation of Example~\ref{ex:negativedtilde}]
  There exists a persistence module $N$ over $\Gf{m,n}$ (for some $m, n$) such that $\aprx{\ast}{N}$ is nonnegative, but $N$ is not interval-decomposable.

  In particular, let $M$ and $I_i$ $(i=1,2,3,4)$  be as given in Example~\ref{ex:negativedtilde}. Then $N := M \oplus I_1$ is such an example.
\end{example}
\begin{proof}    
Since $N=M \oplus I_1 $, $\aprx{\rss}{N} = \aprx{\rss}{M} + \aprx{\rss}{I_1}$ by Lemma~\ref{lem:dtilde-decomp}.
Then we have
\[
  \aprx{\rss}{N}(I_1) = -1 + 1 =0
\]
and  $\aprx{\rss}{N}(I) = \aprx{\rss}{M}(I) + 0 \geq 0$ for all intervals $I \neq I_1$. Thus, $\aprx{\rss}{N}$ is nonnegative, but $N$ is not interval-decomposable since $M$ is an indecomposable summand of $N$ that is not isomorphic to an interval representation.
%\qed
\end{proof}

To deal with the possibility of negative terms in $\aprx{\ast}{M}$ in general, we use the formalism of the split Grothendieck group to express the addition of a negative number of copies of an interval in a direct sum. For more details, see for example the notes \cite[Chapter~2]{lu2013algebraic}.
%Recall the following definition.
\begin{definition}
  \label{defn:grotgroup}
  The \emph{split Grothendieck group} $\Gr (\calC)$ of an additive category $\calC$ is the free abelian group generated by isomorphism classes $[C]$ of objects in $\calC$ modulo the relations $[C_1 \oplus C_2] = [C_1] + [C_2]$ for all objects $C_1,C_2$ of $\calC$.
  For an object $C$ of $\calC$, we denote by $\br{C}$ the element of $\Gr(\calC)$ represented by $[C]$.
\end{definition}
%In this case, we identify $\br{C}=C$.
In the following we consider the split Grothendieck group $\Gr(\rep\Gf{m,n})$ of $\rep\Gf{m,n}$.
Then by the Krull-Schmidt theorem we easily see that it has a basis
$\{\br{L} \mid L \in \mathcal{L}\}$, where $\mathcal{L}$ is a complete set of representatives
of the isomorphism classes of indecomposable representations of $\Gf{m,n}$ (see \cite[Theorem~2.3.6]{lu2013algebraic}).
Thus each $X \in \Gr(\rep\Gf{m,n})$ is uniquely expressed in the form
\[
X = \sum_{L \in \mathcal{L}} a_L\br{L}
\]
with $a_L \in \mathbb{Z}$ for all $L \in \mathcal{L}$.
Here we define the representations
\begin{equation}\label{eq:pos-neg}
X_+ := \bigoplus_{\substack{L \in \mathcal{L}\\a_L \ge 0}} L^{a_L}\quad \text{and}\quad
X_- := \bigoplus_{\substack{L \in \mathcal{L}\\a_L < 0}} L^{(-a_L)},
\end{equation}
which are called the \emph{positive} part and the \emph{negative} part of $X$, respectively.
Note that they are representations of $\Gf{m,n}$ with the property that
$X = \br{X_+} - \br{X_-}$
because
%\[
$
\br{X_+} =
\sum_{\substack{L \in \mathcal{L}\\a_L \ge 0}} {a_L} \br{L}
\ \text{and}\ 
\br{X_-} =
\sum_{\substack{L \in \mathcal{L}\\a_L < 0}} {(-a_L)} \br{L}
%\]
$.
% since
% \begin{align*}
%   \br{X_+} &= \br{\bigoplus_{\substack{L \in \mathcal{L}\\a_L \ge 0}} L^{a_L}} =
%   \sum_{\substack{L \in \mathcal{L}\\a_L \ge 0}} {a_L} \br{L} \\
%   \br{X_-} &= \br{\bigoplus_{\substack{L \in \mathcal{L}\\a_L < 0}} L^{(-a_L)}} =
%   \sum_{\substack{L \in \mathcal{L}\\a_L < 0}} {(-a_L)} \br{L}.
% \end{align*}
%
Therefore, $X$ can be uniquely presented by the pair $(X_+, X_-)$ of representations of $\Gf{m,n}$.

\begin{definition}[interval-decomposable \EDIT{replacement}]
  \label{def:tildeM}
  Let $M \in \rep\Gf{m,n}$.
  Define the \emph{interval-decomposable \EDIT{replacement}} \EDIT{(or \emph{interval-decomposable approximation})}\footnote{See footnote~\ref{footnotereplacement}.}
  $\aprxM{\ast}{M}$ of $M$
  by
  \begin{equation}
    \label{eq:tildeM}
    \aprxM{\ast}{M} := \sum\limits_{I\in\II{m,n}} {\aprx{\ast}{M}(I)}\br{V_I}
    \in \Gr(\rep\Gf{m,n})
  \end{equation}
  for $\ast = \rss, \rcc, \text{or}, \tot$.
\end{definition}
By the above observation, $\aprxM{\ast}{M}$ can be expressed by the pair
$\left(\aprxM{\ast}{M}_+, \aprxM{\ast}{M}_-\right)$ of interval-decomposable representations, where
  \[
    \aprxM{\ast}{M}_+ =
    \bigoplus\limits_{\substack{I\in\II{m,n}\\\aprx{\ast}{M}(I)>0}} {V_I}^{\aprx{\ast}{M}(I)} \text{ and }
    \aprxM{\ast}{M}_- =
    \bigoplus\limits_{\substack{I\in\II{m,n}\\\aprx{\ast}{M}(I)<0}} {V_I}^{(-\aprx{\ast}{M}(I))}.
  \]

\begin{theorem}
  \label{thm:intervaltilde}
  Let $M \in \rep\Gf{m,n}$ be interval-decomposable. Then, $\aprxM{\ast}{M} = \br{M}$,
or equivalently, $\aprxM{\ast}{M}_+ \cong M$ and $\aprxM{\ast}{M}_- = 0$.
\end{theorem}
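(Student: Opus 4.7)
The plan is to reduce the theorem to a direct comparison of coefficients in $\Gr(\rep\Gf{m,n})$ using the Krull--Schmidt basis $\{\br{L} \mid L \in \mathcal{L}\}$, where the key input is that for interval-decomposable $M$ the function $\aprx{\ast}{M}$ on $\II{m,n}$ is already known to coincide with the multiplicity function $d_M$ restricted to intervals.

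First, I would apply Corollary~\ref{cor:restate} (the restatement of Theorem~\ref{thm:interval}), which says that for interval-decomposable $M$,
\[
 d_M = \mu' \dbarfun{\ast}{M}
\]
as functions $\II{m,n} \to \mathbb{R}$. Combined with Theorem~\ref{thm:mobiusequal}, which identifies $\mu' = \mu$ with the M\"obius function of $\II{m,n}$, and with Definition~\ref{defn:dtilde}, this gives $\aprx{\ast}{M}(I) = \mu\dbarfun{\ast}{M}(I) = d_M(V_I)$ for every $I \in \II{m,n}$ and every $\ast \in \{\rss,\rcc,\tot\}$. In particular, all values $\aprx{\ast}{M}(I)$ are nonnegative integers.

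Next, I would expand both sides in the basis $\{\br{L} \mid L \in \mathcal{L}\}$ of $\Gr(\rep\Gf{m,n})$. Since $M$ is interval-decomposable, the Krull--Schmidt theorem (Theorem~\ref{thm:KS}) gives
\[
 \br{M} = \sum_{I \in \II{m,n}} d_M(V_I)\,\br{V_I},
\]
with no contribution from non-interval indecomposables. On the other hand, by Definition~\ref{def:tildeM},
\[
 \aprxM{\ast}{M} = \sum_{I \in \II{m,n}} \aprx{\ast}{M}(I)\,\br{V_I}.
\]
Substituting $\aprx{\ast}{M}(I) = d_M(V_I)$ from the previous step yields $\aprxM{\ast}{M} = \br{M}$, which is the main statement.

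For the equivalent formulation, I would note that since every coefficient $\aprx{\ast}{M}(I) = d_M(V_I) \ge 0$, the negative part of $\aprxM{\ast}{M}$ vanishes, i.e.~$\aprxM{\ast}{M}_- = 0$, while the positive part is
\[
 \aprxM{\ast}{M}_+ = \bigoplus_{I \in \II{m,n}} V_I^{\,d_M(V_I)} \;\cong\; M,
\]
again by Krull--Schmidt. There is no serious obstacle here: the entire argument is a bookkeeping step once Corollary~\ref{cor:restate} and Theorem~\ref{thm:mobiusequal} are in hand. The only thing one must be careful about is the distinction between the equality $\aprxM{\ast}{M} = \br{M}$ in $\Gr(\rep\Gf{m,n})$ and the isomorphism $\aprxM{\ast}{M}_+ \cong M$ of representations, but this is immediate from the uniqueness of expressions in the split Grothendieck group together with Krull--Schmidt.
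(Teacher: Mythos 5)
Your proof is correct and follows essentially the same route as the paper: the paper's one-line argument is that $\aprx{\ast}{M} = d_M$ for interval-decomposable $M$ (which it had already recorded via Equation~\eqref{eq:mobiusrephrase} and Corollary~\ref{cor:restate}), and the conclusion follows immediately. You have simply unpacked that identity and the comparison of basis coefficients in $\Gr(\rep\Gf{m,n})$ more explicitly, which is fine but adds no genuinely new idea.
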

\begin{proof}
  Because $M$ is interval-decomposable, $\aprx{\ast}{M} = d_M$. The conclusion follows immediately from this.
  %\qed
\end{proof}
Note that the converse trivially holds. If $\aprxM{\ast}{M} = \br{M}$
% $\aprxM{\ast}{M}_+ \cong M$ and $\aprxM{\ast}{M}_- = 0$
then 
$M$ is interval-decomposable.

%%% Local Variables:
%%% mode: latex
%%% TeX-master: "main"
%%% End:

% \section{On dimension vectors}

% Let $G_0$ be the set of vertices of $\Gf{m,n}$ and 
% $M$ a representation of $\Gf{m,n}$.
%In this section, 
Let us discuss the relationship between $M$ and $\aprxM{\ast}{M}$.
In particular, we focus on dimension vectors and rank invariants. % (see Section~\ref{sec:comp}).

\begin{example}[Continuation of Example~\ref{ex:negativedtilde}]
  With the same notation as in Example~\ref{ex:negativedtilde}, we have the equality
  \[
    \begin{array}{rcl}
    \displaystyle\sum\limits_{I \in \II{2,3}} \aprx{\rss}{M} (I) \cdot  \udim (V_{I})
    &=&
    \left(\smat{ 1 & 1 & 1 \\ 0& 1 & 1 }\right) 
    + \left(\smat{ 0 & 1 & 0 \\ 0& 1 & 1 }\right)
    + \left(\smat{ 1 & 1 & 0 \\ 0& 0 & 0 }\right)
    - \left(\smat{ 1 & 1 & 0 \\ 0& 1 & 1 }\right) \\
    &=&\left(\smat{1 & 2 & 1 \\ 0 & 1 & 1 }\right)\\
      &=& \udim (M).
    \end{array}
  \]
\end{example}
For $\aprx{\rcc}{M}$, we have a similar equality of the dimension vectors for the example above. This is not a coincidence, and in fact the equality always holds (see Corollary~\ref{cor:dimvec}). First we prove the following stronger statement.
\begin{theorem} \label{thm:rank}
  Let $M$ be a representation of $\Gf{m,n}=(Q,R)$, and
  % $p$ be a path in the quiver $Q$ from $i$ to $j$.
  let $i$ and $j$ be vertices of $Q$ such that there exists a path from $i$ to $j$ in $Q$.
Then we have 
\begin{equation}
  \label{eq:rank}
  \sum\limits_{I\in \II{m,n}} \aprx{\ast}{M}(I) \cdot \rank V_I (i \to j) = \rank M(i \to j).
\end{equation}
for $\ast = \rss,\rcc,\tot$.
\end{theorem}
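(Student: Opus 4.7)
The plan is to reduce the left-hand side of~\eqref{eq:rank} to a single compressed multiplicity at a rectangle, and then invoke Proposition~\ref{prop:rankinv}. Throughout, let $R$ denote the unique rectangle in $\II{m,n}$ having source $i$ and sink $j$; note that $R$ exists because of the assumed path from $i$ to $j$.

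First I would establish the following pointwise identity: for every $I \in \II{m,n}$,
\[
\rank V_I(i \to j) = \begin{cases} 1 & \text{if } R \leq I, \\ 0 & \text{otherwise.} \end{cases}
\]
The forward direction uses convexity of $I$: if both $i,j \in I_0$, then every vertex on a path from $i$ to $j$ in $\Gf{m,n}$ lies in $I_0$, so $V_I(i \to j)$ is a composite of identity maps, hence the identity on $K$. For the other direction, I would observe that $R_0$ is exactly the set of vertices on paths from $i$ to $j$, so $i,j \in I_0$ together with convexity forces $R \leq I$; contrapositively, $R \not\leq I$ forces $V_I(i)=0$ or $V_I(j)=0$. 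Substituting this into the sum on the LHS of~\eqref{eq:rank} collapses it to
\[
\sum_{I \in \II{m,n}} \aprx{\ast}{M}(I) \cdot \rank V_I(i\to j) = \sum_{I \in U(R)} \aprx{\ast}{M}(I).
\]

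The next step is to recognize the resulting sum as a value of the left action of $\zeta$ on $\aprx{\ast}{M}$:
\[
\sum_{I \in U(R)} \aprx{\ast}{M}(I) = \sum_{R \leq I} \zeta([R,I])\, \aprx{\ast}{M}(I) = (\zeta\, \aprx{\ast}{M})(R).
\]
By Definition~\ref{defn:dtilde} we have $\aprx{\ast}{M} = \mu\, \dbarfun{\ast}{M}$, and since $\zeta \ast \mu = \delta$ in the incidence algebra, compatibility of the left action with convolution (shown in Subsection~\ref{subsec:mobius}) gives $\zeta\,\aprx{\ast}{M} = (\zeta\ast\mu)\,\dbarfun{\ast}{M} = \dbarfun{\ast}{M}$. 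Hence the LHS of~\eqref{eq:rank} equals $\dbarfun{\ast}{M}(R)$.

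Finally, applying Proposition~\ref{prop:rankinv} to the rectangle $R$ with source $i$ and sink $j$ yields $\dbarfun{\ast}{M}(R) = \rank M(i \to j)$, which completes the argument. The only slightly delicate point is the pointwise rank identity for $V_I$; everything else is bookkeeping with M\"obius inversion. I expect no genuine obstacle, since the setup of Section~\ref{sec:approximation} has already packaged $\aprx{\ast}{M}$ precisely so that $\zeta$-summation recovers $\dbarfun{\ast}{M}$.
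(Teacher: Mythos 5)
Your proof is correct and follows essentially the same route as the paper's: reduce the sum to $\sum_{I \in U(R)}\aprx{\ast}{M}(I)$ via the pointwise rank identity, recover $\dbar{\ast}{M}{R}$ by M\"obius inversion, and finish with Proposition~\ref{prop:rankinv}. The only cosmetic difference is that you inline the incidence-algebra computation $\zeta\,\aprx{\ast}{M} = \dbarfun{\ast}{M}$, whereas the paper isolates it as Lemma~\ref{lem:prime} and cites it.
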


To prove the theorem above we need the following lemma, which is the essence of Theorem~\ref{thm:rank}.

\begin{lemma}
  \label{lem:prime}
  Let $M \in \rep\Gf{m,n}$ and $I \in \II{m,n}$. Then
  \[
    \dbar{\ast}{M}{I} = \sum\limits_{I\leq J \in \II{m,n}} \aprx{\ast}{M} (J)
  \]
\end{lemma}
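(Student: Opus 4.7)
The statement is simply the Möbius-inversion dual of the definition $\aprx{\ast}{M} = \mu\,\dbarfun{\ast}{M}$, so the plan is to exploit the fact that $\mu$ is by construction the convolution inverse of $\zeta$ in the incidence algebra $I(\II{m,n})$, and that this algebra acts on $\mathbb{R}^{\II{m,n}}$ in a manner compatible with convolution (as was verified in Subsection~\ref{subsec:mobius}).

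Concretely, I will argue as follows. By Definition~\ref{defn:dtilde} we have the equality
\[
\aprx{\ast}{M} \;=\; \mu\,\dbarfun{\ast}{M}
\]
in $\mathbb{R}^{\II{m,n}}$. Applying $\zeta \in I(\II{m,n})$ from the left and using the associativity of the action with respect to convolution (established in Subsection~\ref{subsec:mobius}) yields
\[
\zeta\,\aprx{\ast}{M} \;=\; \zeta\,(\mu\,\dbarfun{\ast}{M}) \;=\; (\zeta * \mu)\,\dbarfun{\ast}{M} \;=\; \delta\,\dbarfun{\ast}{M} \;=\; \dbarfun{\ast}{M},
\]
since $\mu$ is the convolution inverse of $\zeta$ by definition and $\delta$ is the identity of $I(\II{m,n})$. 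Evaluating both sides at $I \in \II{m,n}$ and unfolding the left action gives
\[
\dbar{\ast}{M}{I} \;=\; (\zeta\,\aprx{\ast}{M})(I) \;=\; \sum_{I \leq J} \zeta([I,J])\,\aprx{\ast}{M}(J) \;=\; \sum_{I \leq J \in \II{m,n}} \aprx{\ast}{M}(J),
\]
which is the desired identity. All sums are finite since $\II{m,n}$ is finite.

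There is essentially no obstacle here: this is a mechanical consequence of Möbius inversion once one has the definitional setup. The only point requiring a moment of care is that the action really is associative with respect to convolution, but this has already been verified in Subsection~\ref{subsec:mobius}. Note that the argument works uniformly for $\ast \in \{\rss,\rcc,\tot\}$ since only the relation $\aprx{\ast}{M} = \mu\,\dbarfun{\ast}{M}$ (and not interval-decomposability of $M$) is used.
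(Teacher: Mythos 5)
Your proof is correct and is essentially the same argument as the paper's: both invoke the definition $\aprx{\ast}{M} = \mu\,\dbarfun{\ast}{M}$, invert by multiplying with $\zeta = \mu^{-1}$ using the module structure of $\mathbb{R}^{\II{m,n}}$ over the incidence algebra, and expand the left action of $\zeta$ to get the sum over $J \geq I$. You simply spell out the intermediate steps (the associativity of the action and the evaluation at $I$) that the paper leaves implicit.
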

\begin{proof}
  This follows from M\"obius inversion. That is, by definition $\aprx{\ast}{M} := \mu \dbarfun{\ast}{M} $ and thus
  \[
    \dbarfun{\ast}{M} = \zeta \aprx{\ast}{M}
  \]
  since $\mu^{-1} = \zeta$. The right-hand side expanded out gives the result.
  %\qed
\end{proof}

%Lemma~\ref{lem:prime} is not only the essence of Theorem~\ref{thm:dimvec} but also that of $d_M^{\prime}$.

%Then we prove Theorem~\ref{thm:dimvec}.

%\begin{proof}[Proof of Theorem~\ref{thm:dimvec}]
%It is enough to show that for any $i \in G_0$, 
%$$
%\sum\limits_{I\in \II{m,n}} d_M^{\prime}(I) \cdot (\udim (V_I))_i = (\udim (M))_i.
%$$
%Put $P_i:=\{I \in \II{m,n}\mid (\udim (V_I))_i\not= 0 \}$.
%Let $I(i)$ be the interval subquiver corresponding to $i\in G_0$.
%Note that the representation $V_{I(i)}$ is the simple module $S(i)$ with $\udim S(i) = %(\delta_{i,j})_{j\in G_0}$.
%Then $I(i)$ is the minimum element of the poset $P_i$ and hence $I\in P_i$ if and only if $I(i) \leq I$ in $\II{m,n}$.
%Thus, for t$=$ss,cc, we have the following equalities by Lemma~\ref{lem:prime} and Proposition~\ref{prop:dimvec}:
%$$
%\begin{array}{rcl}
%\sum\limits_{I\in \II{m,n}} d_M^{\prime}(I) \cdot (\udim (V_I))_i & =& \sum\limits_{I\in P_i} d_M^{\prime}(I) \cdot (\udim (V_I))_i \\
%     & = &  \sum\limits_{I\in P_i} d_M^{\prime}(I) \\
%     & = &  \sum\limits_{I(i)\leq I\in \II{m,n}} d_M^{\prime}(I) \\
%     & = & \dbar{t}{M}{I(i)} \\
%     & = & \dim M(i) \\
%     & = & (\udim (M))_i.
%\end{array}
%$$
%%Indeed, $\dbar{t}{M}{I(i)} = \nd{\VComp{t}{I(i)}{M}}{\VComp{t}{I(i)}{V_{I(i)}}}=\nd{M(i)}{K}=\dim M(i)$ for t$=$ss,cc.
%\end{proof}

Then we prove Theorem~\ref{thm:rank}.

\begin{proof}[Proof of Theorem~\ref{thm:rank}]
%For $l=i,j$, we put $P_l:=\{I \in \II{m,n}\mid (\udim (V_I))_l\not= 0 \}$.
%We use the same notation in the proof of Theorem~\ref{thm:dimvec}.
% Put $P_i:=\{I \in \II{m,n}\mid (\udim (V_I))_i\not= 0 \}$.
% Let $I(i)$ be the interval subquiver corresponding to $i\in G_0$.
% Note that the representation $V_{I(i)}$ is the simple module $S(i)$ with $\udim S(i) = (\delta_{i,j})_{j\in G_0}$.
% Then $I(i)$ is the minimum element of the poset $P_i$ and hence $I\in P_i$ if and only if $I(i) \leq I$ in $\II{m,n}$.

% Moreover, $I\in P_i \cap P_j$ if and only if $I(i)\leq I$ and $I(j)\leq I$ in $\II{m,n}$. Since we have a path $p$ from $i$ to $j$, the rectangle with source $i$ and sink $j$ exists and is the join $I(i) \vee I(j)$.
% So, $I\in P_i \cap P_j$ if and only if $I(i) \vee I(j)\leq I$.
% Note that $I(i) \bigvee I(j)$ is a rectangle with unique source vertex $i$ and unique sink vertex $j$.

  % Let $\{i\}$ be the interval subquiver consisting of only the vertex $i\in G_0$.
  Since there is a path from $i$ to $j$, the rectangle with source $i$ and sink $j$ exists. We denote this rectangle with source $i$ and sink $j$ by $R_{i,j}$.

  We note that for an interval $I\in\II{m,n}$, $\rank V_I(i \to j)$ is $1$ if and only if $I$ contains the rectangle $R_{i,j}$ and is $0$ otherwise. This gives the first equality in the following computation. We have
  \[    
    \begin{array}{rcl}
      \displaystyle\sum\limits_{I\in \II{m,n}} \aprx{\ast}{M} (I) \cdot \rank V_I (i \to j)
      & = &  \displaystyle\sum\limits_{R_{i,j}\leq I\in \II{m,n}} \aprx{\ast}{M} (I) \\
      & = & \dbar{\ast}{M}{R_{i,j}} \\
      & = & \rank M(i \to j),
    \end{array}
  \]
   where the second equality follows from Lemma~\ref{lem:prime}, and the last equality follows by applying Proposition~\ref{prop:rankinv}.
% On the other hand, we have $\rank I(i \to j)=0,1$.
% In this case, $\rank I(i \to j)=1$ if and only if $I(i \to j)$ is the identity morphism.
% This means that $I\in P_i \cap P_j$. 
% Thus, for $\ast=\rss,\rcc,\tot$, we have the following equalities by Lemma~\ref{lem:prime}:
% $$
% \begin{array}{rcl}
% \sum\limits_{I\in \II{m,n}} \aprx{\ast}{M} (I) \cdot \rank I (i \to j) 
% & =& \sum\limits_{I\in P_i\cap P_j} \aprx{\ast}{M} (I) \cdot \rank I (i \to j) \\
%      & = &  \sum\limits_{I\in P_i\cap P_j} \aprx{\ast}{M} (I) \\
%      & = &  \sum\limits_{I(i)\vee I(j)\leq I\in \II{m,n}} \aprx{\ast}{M} (I) \\
%      & = & \dbar{\ast}{M}{I(i)\vee I(j)} \\
%      & = & \rank M(i \to j).
% \end{array}
% $$
   %\qed
\end{proof}

As a corollary of Theorem~\ref{thm:rank}, we have the following desired equation for dimension vectors.

\begin{corollary} \label{cor:dimvec}
Let $M$ be a representation of $\Gf{m,n}$.
Then we have 
\begin{equation}
  \label{eq:dimvec}
  \sum\limits_{I\in \II{m,n}} \aprx{\ast}{M}(I) \cdot \udim (V_I) = \udim (M).
\end{equation}
\end{corollary}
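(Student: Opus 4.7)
The plan is to deduce the corollary as a pointwise special case of Theorem~\ref{thm:rank}. Recall that $\udim(V) = (\dim V(k))_{k \in (\Gf{m,n})_0}$, so Equation~\eqref{eq:dimvec} is an equation in $\bbZ^{(\Gf{m,n})_0}$, and it suffices to verify it component by component at each vertex $k$ of $\Gf{m,n}$.

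Fix a vertex $k$. Taking $i = j = k$ in Theorem~\ref{thm:rank} is legitimate because the path $e_k$ of length $0$ at $k$ is a path from $k$ to $k$ in $\Gf{m,n}$. Under this choice, for any representation $V$ of $\Gf{m,n}$ the morphism $V(k \to k)$ is the identity map $\id_{V(k)}$, so
\[
\rank V(k \to k) = \dim V(k).
\]
Applying this to each interval representation $V_I$ and to $M$, Theorem~\ref{thm:rank} immediately yields
\[
\sum_{I \in \II{m,n}} \aprx{\ast}{M}(I) \cdot \dim V_I(k) = \dim M(k),
\]
which is precisely the $k$-th component of Equation~\eqref{eq:dimvec}.

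Since $k$ was an arbitrary vertex, assembling these component equalities gives $\sum_{I \in \II{m,n}} \aprx{\ast}{M}(I)\cdot \udim(V_I) = \udim(M)$, completing the proof. There is essentially no obstacle here: the only point to check is that Theorem~\ref{thm:rank} is applicable with $i = j$, and this is ensured by the convention that paths of length zero exist at every vertex.
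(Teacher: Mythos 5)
Your proof is correct and takes essentially the same approach as the paper: both reduce Equation~\eqref{eq:dimvec} to its component at each vertex, observe that $(\udim V)_k = \rank V(k\to k)$ via the length-zero path $e_k$, and then apply Theorem~\ref{thm:rank} with $i=j=k$. Nothing further to add.
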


\begin{proof}
It is enough to show that for any $i \in G_0$, 
$$
\sum\limits_{I\in \II{m,n}} \aprx{\ast}{M} (I) \cdot (\udim (V_I))_i = (\udim (M))_i.
$$
Note that $(\udim (V_I))_i = \rank V_I(i\to i)$ and $(\udim (M))_i = \rank M(i \to i)$, where the path $i\to i$ means the path $e_i$ of length $0$ at $i$.
Thus, by Theorem~\ref{thm:rank}, we obtain the above equation.
%\qed
\end{proof}

Let us give another consequence of this result,
which warns us against thinking of \EDIT{$\aprx{\ast}{M}$ as a kind of}
approximation %NOTE: THIS IS OK
in terms of functions. \EDIT{In more detail, each}
$M\in\rep{\Gf{m,n}}$ can be written as $M \cong M_I \oplus X$, where $M_I$ is interval-decomposable, and $0\neq X$ has no interval representation as a summand. By Lemma~\ref{lem:dtilde-decomp},
\begin{equation}
  \label{eq:tilded}
  \aprx{\ast}{M} = \aprx{\ast}{M_I} + \aprx{\ast}{X} = d_{M_I} + \aprx{\ast}{X}  : \II{m,n} \rightarrow \mathbb{R}
\end{equation}
where we also use the fact that $\aprx{\ast}{M_I} = d_{M_I}$ because $M_I$ is interval-decomposable. Restricted to $\II{m,n}$, $d_{M}$ has the same values as $d_{M_I}$. Precisely speaking, by our abuse of notation $d_M : \II{m,n} \rightarrow \mathbb{R}$ above is the full multiplicity function $d_M$ restricted to the set of interval representations, which can be identified with $\II{m,n}$.
Thus, we may be tempted to think of using $\aprx{\ast}{M}$
to approximate
%NOTE: THIS IS OK
$d_{M_I} = d_M$ as functions on $\II{m,n}$. To measure the error involved, we use the $\ell_1$-norm of functions $f:\II{m,n}\rightarrow \mathbb{R}$ defined by $\left\lVert f \right\rVert_1 = \sum_{I\in\II{m,n}} |f(I)|$. Let us consider the value of
\[
  \left\lVert \aprx{\ast}{X} \right\rVert_1 =  \left\lVert \aprx{\ast}{M} - d_M \right\rVert_1.
\]
We remind the reader that we are considering $d_M$ as a function on $\II{m,n}$ by restriction.
\begin{corollary}
  Let $\Gf{m,n}$ be an equioriented commutative grid of size at least $2\times 5$ or $5\times 2$.
  For any $\ell \in \mathbb{N}$, there exists an indecomposable non-interval representation $X \in \rep\Gf{m,n}$, such that
  \[
    \left\lVert \aprx{\ast}{X} \right\rVert_1 \geq \ell.
  \]
\end{corollary}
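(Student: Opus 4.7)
The plan is to construct $M$ as a direct sum of many copies of a single non-interval indecomposable representation $W$, and then use the additivity of $\aprx{\ast}{\blank}$ (Lemma~\ref{lem:dtilde-decomp}) to scale $\left\lVert \aprx{\ast}{W}\right\rVert_1$ by the multiplicity. So I just need to exhibit one non-interval indecomposable $W$ with $\left\lVert \aprx{\ast}{W}\right\rVert_1 \geq 1$.

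First I would establish the existence of a non-interval indecomposable $W \in \rep\Gf{m,n}$. By the references cited in Subsection~\ref{subsec:2byn} (in particular \cite[Theorem~1.3]{bauer2020cotorsion}), the bound quiver $\Gf{m,n}$ is of infinite representation type as soon as $\min(m,n)\geq 2$ and $\max(m,n)\geq 5$, which is exactly our hypothesis. Hence $\calL$ is infinite. Since there are only finitely many interval subquivers in $\II{m,n}$ (and thus only finitely many isomorphism classes of interval indecomposables), there must exist a non-interval indecomposable $W \in \rep\Gf{m,n}$.

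Next, I would show $\left\lVert \aprx{\ast}{W}\right\rVert_1 \geq 1$. Since $W \neq 0$, its dimension vector $\udim(W)$ is nonzero. By Corollary~\ref{cor:dimvec},
\[
\sum_{I\in \II{m,n}} \aprx{\ast}{W}(I)\cdot \udim(V_I) \;=\; \udim(W)\;\neq\; 0,
\]
so at least one coefficient $\aprx{\ast}{W}(I)$ must be nonzero. Therefore $\left\lVert \aprx{\ast}{W}\right\rVert_1 = \sum_{I}|\aprx{\ast}{W}(I)| \geq 1$.

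Finally, set $M := W^{\ell}$. Because $W$ is a non-interval indecomposable, the Krull--Schmidt theorem (Theorem~\ref{thm:KS}) identifies the largest direct summand of $M$ containing no interval representation as a summand: it is $X = M = W^{\ell}$ itself. Iterating Lemma~\ref{lem:dtilde-decomp} gives $\aprx{\ast}{X} = \ell\cdot \aprx{\ast}{W}$, whence $\left\lVert \aprx{\ast}{X}\right\rVert_1 = \ell \cdot \left\lVert \aprx{\ast}{W}\right\rVert_1 \geq \ell$, as required. No step here is a serious obstacle; the only nontrivial input is the appeal to infinite representation type to guarantee the existence of $W$, and the rest is a one-line application of additivity and the dimension-vector preservation Corollary~\ref{cor:dimvec}.
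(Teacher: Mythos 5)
Your proof is correct, but it takes a genuinely different route from the paper's. The paper explicitly constructs a \emph{single} non-interval indecomposable $M=X$ over $\Gf{2,5}$ (the construction from \cite{buchet_et_al:socg}, padded with zeros for larger grids), engineered so that some vertex $i$ has $\dim X(i)=\ell$, and then evaluates Corollary~\ref{cor:dimvec} at that vertex to get $\ell = \sum_{I\ni i}\aprx{\ast}{X}(I)\leq\left\lVert\aprx{\ast}{X}\right\rVert_1$. You instead take \emph{any} non-interval indecomposable $W$ (existence from infinite representation type), observe that $\udim(W)\neq 0$ forces $\aprx{\ast}{W}$ to be a nonzero integer-valued function so $\left\lVert\aprx{\ast}{W}\right\rVert_1\geq 1$, and then scale multiplicity via $X=M=W^{\ell}$ and Lemma~\ref{lem:dtilde-decomp}. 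Both approaches are sound and both hinge on Corollary~\ref{cor:dimvec}; the paper's buys explicitness (a concrete $X$ that is itself \emph{indecomposable}, demonstrating the error is not merely an artifact of high multiplicity of a small summand), while yours is shorter and modular, reducing everything to $\left\lVert\aprx{\ast}{W}\right\rVert_1\geq 1$ for some $W$. One small point worth making explicit in your write-up: the step $\left\lVert\aprx{\ast}{W}\right\rVert_1\geq 1$ relies on $\aprx{\ast}{W}$ being integer-valued, which is immediate from Definition~\ref{defn:dtilde} (an integer combination of the nonnegative integers $\dbar{\ast}{W}{\cdot}$) but should be stated.
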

\begin{proof}
  The construction in \cite{buchet_et_al:socg} provides such an indecomposable non-interval $X \in \rep\Gf{2,5}$ (for $\Gf{m,n}$ larger than $2\times 5$, we simply pad with zero spaces and zero maps):
  \[
    \begin{tikzcd}[ampersand replacement=\&]
      K^\ell \rar{\left[\smat{E\\0}\right]}
      \&
      K^{2\ell} \rar
      \&
      K^{2\ell} \rar{\left[\smat{E&0}\right]}
      \&
      K^\ell \rar
      \&
      0
      \\
      0 \rar \uar
      \&
      K^\ell \rar{\left[\smat{E\\0}\right]} \uar{\left[\smat{E\\E}\right]}
      \&
      K^{2\ell} \rar \uar{\left[\smat{E&E\\E&J}\right]}
      \&
      K^{2\ell} \rar{\left[\smat{E&0}\right]} \uar{\left[\smat{E&E}\right]}
      \&
      K^\ell \uar        
    \end{tikzcd}
  \]
  where each $E$ is an $\ell\times\ell$ identity matrix, and $J$ is the $\ell \times \ell$ Jordan block with eigenvalue $\lambda=1$.
  
  Let $i$ be one of the vertices such that $X(i)$ has dimension at least $\ell$. We compute:
  \[
    \begin{array}{rcl}
      \ell \leq \dim X(i)
      &=& \displaystyle\sum\limits_{I\in \II{m,n}} \aprx{\ast}{X} (I) \cdot (\udim (V_I))_i \\
      &=& \displaystyle\sum\limits_{I : i \in I} \aprx{\ast}{X} (I) \\
      &\leq& \displaystyle\sum\limits_{I : i \in I} | \aprx{\ast}{X} (I)| \\
      &\leq& \displaystyle\sum\limits_{I \in \II{m,n}} | \aprx{\ast}{X} (I)| \\
      &=& \left\lVert \aprx{\ast}{X} \right\rVert_1,
    \end{array}
  \]
  where the first line follows from Corollary~\ref{cor:dimvec}.
  %\qed
\end{proof}

\begin{remark}
  A simpler proof can be provided, if we allow $X$ to not be indecomposable in the preceding corollary, as follows.
  Let $N$ be an indecomposable non-interval representation, which is known to exist. For example, the above indecomposable can be reused. Then, defining $X$ as the direct sum of $\ell$ copies of $N$, we have that $X$ and
  \[
    \left\lVert \aprx{\ast}{X} \right\rVert_1 = \left\lVert \sum_{i=1}^\ell \aprx{\ast}{N} \right\rVert_1 = \ell \left\lVert \aprx{\ast}{N} \right\rVert_1 \geq \ell
  \]
  since $\left\lVert \aprx{\ast}{N} \right\rVert_1 \geq 1$ (otherwise $\aprx{\ast}{N}=0$ and thus $N=0$, a contradiction).  
\end{remark}

In other words, the ``error term'' $\left\lVert \aprx{\ast}{X} \right\rVert_1$ can be made arbitrarily large by varying $M$. 
  However, in the above analysis, we considered the ``error term''
$\left\lVert \aprx{\ast}{X} \right\rVert_1 =  \left\lVert \aprx{\ast}{M} - d_M \right\rVert_1$
where $d_M$ is considered as a function on $\II{m,n}$ by restriction. That is, its values on non-intervals are ignored. A more comprehensive analysis could potentially take into account those terms as well.

Finally, let us give an interpretation of Theorem~\ref{thm:rank} and Corollary~\ref{cor:dimvec}.
The left-hand side
\[
  \sum\limits_{I\in \II{m,n}} \aprx{\ast}{M}(I) \cdot \rank V_I (i \to j)
\]
of Equation~\eqref{eq:rank} in Theorem~\ref{thm:rank} and the left-hand side
\[
  \sum\limits_{I\in \II{m,n}} \aprx{\ast}{M}(I) \cdot \udim (V_I)
\]
of Equation~\eqref{eq:dimvec} in Corollary~\ref{cor:dimvec} can be viewed as the rank invariant and the dimension vector of the interval-decomposable \EDIT{replacement} %approximation
\[
  \aprxM{\ast}{M} = \sum\limits_{I\in\II{m,n}} {\aprx{\ast}{M}(I)}\br{V_I},
\] respectively. That is, the rank invariant (dimension vector, respectively) of $\aprxM{\ast}{M}$ can be defined by adding the rank invariants (dimension vectors, respectively) of its summands.
With this, Theorem~\ref{thm:rank} and Corollary~\ref{cor:dimvec} simply states that
the interval-decomposable \EDIT{replacement} $\aprxM{\ast}{M}$ preserves the rank invariant and dimension vector of $M$. It is in this sense that we think of \EDIT{replacing (or loosely speaking, approximating)}
$M$ by $\aprxM{\ast}{M}$.

%%% Local Variables:
%%% mode: latex
%%% TeX-master: "main"
%%% End:

\section{Algorithms for equioriented commutative ladders}
\label{sec:algorithm}

Let $M$ be a persistence module over an equioriented $m \times n$ commutative grid.
For completeness, we first present a high-level overview of an algorithm for the computation of our proposed interval-decomposable \EDIT{replacement} $\aprxM{\ast}{M}$. Afterwards, we consider the case of persistence modules over equioriented commutative ladders ($2\times n$ commutative grids).

The computation of \EDIT{interval-decomposable replacement}
$\aprxM{\ast}{M} = \sum\limits_{I\in\II{m,n}} {\aprx{\ast}{M}(I)}\br{V_I}$ of $M$
involves two major steps:
\begin{enumerate}
\item (Algorithm~\ref{alg:compressedmultiplicity}) computation of the compressed multiplicity function
  $\dbarfun{\ast}{M} : \II{m,n} \rightarrow \mathbb{N}$, defined by
  \[
    \dbar{\ast}{M}{I}:= \nd{\VComp{\ast}{I}{M}}{\VComp{\ast}{I}{V_I}}
  \]
  for $I\in \II{m,n}$, and
\item (Algorithm~\ref{alg:inversion}) computation of the M\"obius inversion
  $\aprx{\ast}{M} = \mu \dbarfun{\ast}{M}$ given by
  \[
    \aprx{\ast}{M}(I):= \dbar{\ast}{M}{I} + \sum\limits_{\emptyset \not= S \subseteq \Cov (I)} (-1)^{\# S} \dbar{\ast}{M}{\bigvee S}.
  \]
  for $I \in \II{m,n}$.
\end{enumerate}

Algorithm~\ref{alg:compressedmultiplicity} below for the computation of the compressed multiplicity simply expands upon the definition.
\begin{algorithm}[H]
  \caption{Compressed multiplicity $\dbarfun{\ast}{M}$ of $M$}
  \label{alg:compressedmultiplicity}
  \begin{algorithmic}[1]
    \Function{CompressedMultiplicity}{$M$}
    \State Initialize the function $\dbarfun{\ast}{M}$ on $\II{m,n}$ \EDIT{to zero}
    \For {$I \in \II{m,n}$}
    \State Compute the compressed representation $M' = \VComp{\ast}{I}{M}$.\label{algline:Mcomp}
    \State Compute the compressed representation $I' = \VComp{\ast}{I}{V_I}$.
    \StatexIndent[3] (which is simply the interval representation with the whole of $\QComp{\ast}{I}$ as support)
    \State Compute the multiplicity $d_{M'}(I')$ of $I'$ in $M'$.\label{algline:mult}
    \State $\dbar{\ast}{M}{I} \gets d_{M'}(I')$
    \EndFor
    \State \Return $\dbarfun{\ast}{M}$
    \EndFunction
  \end{algorithmic}
\end{algorithm}

Line~\ref{algline:Mcomp} of Algorithm~\ref{alg:compressedmultiplicity} for the compressed representation $M' = \VComp{\ast}{I}{M}$ simply means forgetting about the vector spaces (internal linear maps, resp.) of $M$ corresponding to objects (morphisms, resp.) \emph{not} in the compressed category $\QComp{\ast}{I}$.
Note that depending on how $M$ is stored, extra computations are needed
(if some of the internal maps of $M$ are not explicitly stored,
they may need to be computed explicitly and stored if they rely on internal maps about to be forgotten).
We provide an example of this with the $2\times n$ case later.

In general, the computation of the multiplicity $d_{M'}(I')$ of $I'$ in $M'$
(Line~\ref{algline:mult} of Algorithm~\ref{alg:compressedmultiplicity})
can be accomplished by computing the dimensions of certain homomorphism spaces to entries in the almost split
sequence\footnote{%%%%%%%%%%%%%%%%%%%%%%%%%%%%%%%%%%%%%%%%%%%%%%%%
A non-split short exact sequence $(E):\ 0 \to X \xrightarrow{f} Y \xrightarrow{g} Z \to 0$
is called an {\em almost split sequence} starting at $X$
if both $X$ and $Z$ are indecomposable, and if for any homomorphism
$h \colon X \to V$, either $h$ is a split monomorphism or
the pushout of $(E)$ along $h$ splits.
} %%%%%%%%%%%%%%%%%%%%%%%%%%%%%%%%%%%%%%%%%%%%%%%%%%%%%%%%%%%%%%%%%
starting at $I'$ (see \cite[Theorem~3]{Asashiba2017}, \cite[Corollary.~2.3]{dowbor2007multiplicity} and
also \cite[Algorithms~3,~4]{asashiba2022interval}).
\EDIT{Indeed, for Algorithm~\ref{alg:compressedmultiplicity} and
its specialization to the $2\times n$ commutative grids in Algorithm~\ref{alg:compressedmultiplicity2n},
we rely heavily on \cite{Asashiba2017}.}

Algorithm~\ref{alg:inversion} is also a straightforward expansion of the definition.

\begin{algorithm}[H]
  \caption{M\"obius inversion $\aprx{\ast}{M}$ of $\dbarfun{\ast}{M}$}
  \label{alg:inversion}
  \begin{algorithmic}[1]
    \Function{M\"obiusInversion}{$\dbarfun{\ast}{M}$}
    \State Initialize the function $\aprx{\ast}{M}$ on $\II{m,n}$ \EDIT{to zero}
    \For {$I \in \II{m,n}$}
      \State $a \gets \dbarfun{\ast}{M}(I)$
      \State Compute $\Cov (I)$ \label{algline:cov}
      \For {$\emptyset \neq S \subseteq \Cov (I)$}
        \State Compute $\bigvee S$
        \State $a \gets a + (-1)^{\# S} \dbarfun{\ast}{M}(\bigvee S)$
      \EndFor
      \State $\aprx{\ast}{M}(I) \gets a$
    \EndFor
    \State \Return $\aprx{\ast}{M}$
    \EndFunction
  \end{algorithmic}
\end{algorithm}

Algorithm~\ref{alg:inversion} requires the computation of joins of cover elements of $I$. We comment on this below.
Let $I  = \bigsqcup_{i=s}^t [b_i,d_i]_i$.
By Proposition~\ref{prop:cover}, the elements of $\Cov (I)$ are given by a specific form.
We recall that Proposition~\ref{prop:cover} only provides a list of candidates,
from which picking up all valid intervals forms $\Cov(I)$.
We single out the following four \emph{potential} cover elements specified by Proposition~\ref{prop:cover}
that need special consideration:
\begin{enumerate}
\item extension of the %last row
  \EDIT{top row}
  of $I$ by one adjacent vertex left of the row (top-left)
  \[
    C_{tl} =
    \displaystyle\bigsqcup_{i=s}^t [b'_i,d_i]_i
    \text{, where }
    b'_i = \begin{cases}
      b_i-1 & \text{if } i=t,\\
      b_i & \text{otherwise,}
    \end{cases}
  \]
\item extension of the %first row
  \EDIT{bottom row}
  of $I$ by one adjacent vertex right of the row (bottom-right)
  \[
    C_{br} =
    \displaystyle\bigsqcup_{i=s}^t [b_i,d'_i]_i
    \text{, where }
    d'_i = \begin{cases}
      d_i+1 & \text{if } i=s,\\
      d_i & \text{otherwise,}
    \end{cases}
  \]
\item addition of one vertex above the upper-left vertex of $I$ (top)
  \[
    C_{t} = \displaystyle\bigsqcup_{i=s}^t [b_i,d_i]_i \sqcup [b_t,b_t]_{t+1},
  \]
\item addition of one vertex below the lower-right vertex of $I$ (bottom)
  \[
    C_{b} = \displaystyle [d_s,d_s]_{s-1} \sqcup \bigsqcup_{i=s}^t [b_i,d_i]_i.
  \]
\end{enumerate}

\begin{remark}
\label{remark:hensyuu}
%Then,
It is clear that if $S \subset \Cov(I)$
\begin{itemize}
\item does not contain both $C_{tl}$ and $C_{t}$, and
\item does not contain both $C_{br}$ and $C_b$,
\end{itemize}
then $\bigvee S = \bigcup_{C\in S} C$.
That is, simply taking the union is enough since the union is an interval.

Otherwise, we need to add at most two vertices to $\bigcup_{C\in S} C$ in order to obtain $\bigvee S$.
If $S \subset \Cov(I)$ contains both $C_{tl}$ and $C_{t}$, then an additional vertex in the top left needs to be added to form an interval. Similarly, if $S \subset \Cov(I)$ contains both $C_{br}$ and $C_{b}$, then an additional vertex in the bottom right needs to be added to form an interval.
\end{remark}

\begin{example}
  \label{example:cover2}
  We provide an example using the interval $I$ in the commutative grid $\Gf{5,6}$ with candidate vertices marked as in Example~\ref{example:cover}.
  \[
    \newcommand{\bb}{\bullet}
    \newcommand{\cb}{{\color{green}\text{\cmark}}}
    \newcommand{\xb}{{\color{red}\text{\xmark}}}
    \newcommand{\rard}{\rar[dashed,gray]}
    \newcommand{\uard}{\uar[dashed,gray]}
    \begin{tikzcd}[graphstyle,every matrix/.append style={name=m}]
      \circ \rard & \circ \rard & \circ \rard & \circ \\
      \circ \rard\uard & \bb \rar\uard & \bb \rard\uard & \circ \uard  \\
      \circ \rard\uard & \circ \rard\uard & \bb \rard\uar & \circ \uard \\
      \circ \rard\uard & \circ \rard\uard & \circ \rard\uard & \circ \uard
    \end{tikzcd}
    \quad\quad
    \begin{tikzcd}[graphstyle,every matrix/.append style={name=m}]
      \circ \rard & \cb \rard & \circ \rard & \circ \\
      \cb \rard\uard & \bb \rar\uard & \bb \rard\uard & \xb \uard  \\
      \circ \rard\uard & \cb \rard\uard & \bb \rard\uar & \cb \uard \\
      \circ \rard\uard & \circ \rard\uard & \cb \rard\uard & \circ \uard
    \end{tikzcd}
  \]
  In dimension vector notation,
  \[
    I =
    \left(
      \smat{
        0&0&0&0 \\
        0&1&1&0 \\
        0&0&1&0 \\
        0&0&0&0
      }\right)
  \]
  and all the cover elements are given by
  \begin{align*}
    C_{tl} & =
    \left(
      \smat{
        0&0&0&0 \\
        1&1&1&0 \\
        0&0&1&0 \\
        0&0&0&0
      }\right),
    C_{t}=
    \left(
      \smat{
        0&1&0&0 \\
        0&1&1&0 \\
        0&0&1&0 \\
        0&0&0&0
      }\right),\\
   C_{br} & =
    \left(
      \smat{
        0&0&0&0 \\
        0&1&1&0 \\
        0&0&1&1 \\
        0&0&0&0
        }\right),
    C_{b}=
    \left(
      \smat{
        0&0&0&0 \\
        0&1&1&0 \\
        0&0&1&0 \\
        0&0&1&0
      }\right),\\
   C & =
       \left(
       \smat{
       0&0&0&0 \\
       0&1&1&0 \\
       0&1&1&0 \\
       0&0&0&0
     }\right).
  \end{align*}
  Thus, for example,
  \[
    C_{br} \vee C_t \vee C
    =
    \left(
      \smat{
        0&1&0&0 \\
        0&1&1&0 \\
        0&1&1&1 \\
        0&0&0&0
      }\right)
    = C_{br} \cup C_t \cup C
  \]
  while
  \[
    C_{tl} \vee C_t \vee C_{br}
    =
    \left(
      \smat{
        1&1&0&0 \\
        1&1&1&0 \\
        0&0&1&1 \\
        0&0&0&0
      }\right)
    = \{v\} \cup C_{tl} \cup C_t \cup C_{br}
  \]
  where $v$ is the vertex at the upper-left corner.
\end{example}

\begin{theorem}
  \label{thm:algcost_inversion}
  Algorithm~\ref{alg:inversion}, which computes $\aprx{\ast}{M}$ given $\dbarfun{\ast}{M}$, can be performed with time complexity $O(\#\II{m,n}2^DD m)$, where $D = \max_{I \in \II{m,n}} \# \Cov (I)$.
\end{theorem}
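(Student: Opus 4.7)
The plan is to bound the work done by Algorithm~\ref{alg:inversion} line by line, representing each interval $I \in \II{m,n}$ by its staircase data $(s,t,(b_i,d_i)_{s\le i\le t})$. Using the dual staircase representation when beneficial (reading the interval by columns rather than rows, or vice versa), this data has size $O(\min\{m,n\})$. The outer \texttt{for}-loop immediately contributes the factor $\#\II{m,n}$, so I would fix $I$ and analyze the cost of a single outer iteration.

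First, Line~\ref{algline:cov} computes $\Cov(I)$. By Proposition~\ref{prop:cover}, the cover elements are drawn from a list of $O(C)$ candidate local modifications of $I$: extending one row at either end, or adding a single vertex above the upper-left or below the lower-right corner. Each candidate can be generated, its validity against the staircase inequalities~\eqref{eq:intervalcond} checked, and its staircase data written down in $O(\min\{m,n\})$ time. Thus this step costs $O(C\min\{m,n\})$.

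Next, the inner loop iterates over the at most $2^C - 1$ nonempty subsets $S \subseteq \Cov(I)$. For each such $S$ the key step is computing $\bigvee S$, and I would exploit the discussion following Proposition~\ref{prop:cover}: by Proposition~\ref{prop:gradedposet} each $C \in S$ contributes exactly one known extra vertex to $I$, and $\bigcup_{C \in S} C = \bigvee S$ unless $S$ contains $\{C_{tl}, C_t\}$ or $\{C_{br}, C_b\}$, in which case at most two additional corner vertices must be supplied to restore an interval. Testing these two special conditions takes $O(C)$ time, and updating the staircase representation of the resulting join then takes $O(\min\{m,n\})$ time. A lookup of $\dbarfun{\ast}{M}(\bigvee S)$ in a hash table keyed by staircase data takes a further $O(\min\{m,n\})$, and the arithmetic update of $a$ is $O(1)$. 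Hence each subset contributes $O(C + \min\{m,n\}) = O(C\min\{m,n\})$ work.

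Summing the contributions yields a per-$I$ cost of $O(C\min\{m,n\}) + O(2^C \cdot C\min\{m,n\}) = O(2^C C\min\{m,n\})$, and multiplying by $\#\II{m,n}$ gives the claimed bound. The main obstacle I anticipate is the bookkeeping around the join computation: one must verify that the two exceptional corner insertions really do restore the staircase property for every $S$ that triggers them, and that the modifications to the staircase data can genuinely be implemented within the stated $O(\min\{m,n\})$ bound for the chosen data structure. Beyond this bookkeeping, the proof reduces to routine counting of loop iterations.
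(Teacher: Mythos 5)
Your proposal is correct and follows essentially the same route as the paper: iterate over the at most $2^C-1$ nonempty subsets of $\Cov(I)$ for each $I$, compute $\bigvee S$ as $\bigcup_{C\in S}C$ with the two exceptional corner insertions when $\{C_{tl},C_t\}$ or $\{C_{br},C_b\}$ both appear in $S$, and charge $O(\min\{m,n\})$ per union using the row-wise staircase representation. The only additions you make beyond the paper's proof are small implementation details (choosing the shorter staircase orientation, hashing on staircase data for the $\dbarfun{\ast}{M}$ lookup) that the paper leaves implicit; the substance is identical.
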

\begin{proof}
  For each $I \in \II{m,n}$, there are at most $2^D -1$ nonempty subsets $S$ of $\Cov (I)$.
  By the formula
  \[
    \aprx{\ast}{M}(I):= \dbar{\ast}{M}{I} + \sum\limits_{\emptyset \not= S \subseteq \Cov (I)} (-1)^{\# S} \dbar{\ast}{M}{\bigvee S}
  \]
  for each $S$, we need to compute $\bigvee S$, which is the join of at most $D$ intervals.

  We first compute $\bigcup_{C \in S} C$ by the following.
  Assuming that intervals are represented in the form of $I  = \bigsqcup_{i=s}^t [b_i,d_i]_i$ (row-wise), with the number of rows equal to $m$, %$\min\{m,n\}$}, 
  the union of two cover elements can be computed by iterating through the $m$ rows and taking the union of the corresponding intervals $[b_i,d_i]_i \cup [b'_i,d'_i]_i$. We iterate over the elements of $S$ (at most $D$) to obtain $\bigcup_{C \in S} C$. 
  
  Finally, the above discussion around Remark~\ref{remark:hensyuu} concerning the four cover elements $C_{tl}, C_t, C_{br}, C_b$ that need special consideration provides the computation of $\bigvee S$ %for $S \subset \Cov(I)$ 
  by modifying the union $\bigcup_{C \in S} C$.
  We simply need to check for the presence of both $C_{tl}$ and $C_{t}$ in $S$, and both $C_{br}$ and $C_b$ in $S$, and add the additional vertices to $\bigcup_{C \in S} C$ to obtain $\bigvee S$, as noted in Remark~\ref{remark:hensyuu}.
  
  By the above, we have as an upper bound $\#\II{m,n} \cdot (2^D-1) \cdot D \cdot m$ operations, giving the claimed time complexity. 
  %\qed
\end{proof}

Next, we consider the case of equioriented commutative ladders with $\ast = \rss$,
where it has been noted in Subsection~\ref{subsec:2byn} that the ss-compressed category is of
Dynkin $A_n$-type with $n \leq 4$ (Proposition~\ref{prop:ss-fin}).
So, let $M$ be a persistence module over the $2\times n$ commutative grid, and let
\[
  d = \max_{v \in \left(\Gf{2,n}\right)_0} \dim M(v).
\]
In particular $M \in \rep{\Gf{2,n}}$ is given as the following collection of vector spaces and linear maps
\[
  \begin{tikzcd}[column sep=6em]
    M(2,1) \rar{\scriptsize{M((2,1)\rightarrow(2,2))}} &
    M(2,2) \rar{\scriptsize{M((2,1)\rightarrow(2,3))}} &
    \cdots \rar{\scriptsize{M((2,n-1)\rightarrow(2,n))}} &
    M(2,n)\\
    M(1,1) \rar{\scriptsize{M((1,1)\rightarrow(1,2))}}
    \uar[swap]{\scriptsize{M((1,1)\rightarrow(2,1))}}
    &
    M(1,2) \rar{\scriptsize{M((1,2)\rightarrow(1,3))}}
    \uar[swap]{\scriptsize{M((1,2)\rightarrow(2,2))}}
    &
    \cdots \rar{\scriptsize{M((1,n-1)\rightarrow(1,n))}}
    &
    M(1,n)
    \uar{\scriptsize{M((1,n)\rightarrow(2,n))}}
  \end{tikzcd}
\]
such that
\[
  \begin{tikzcd}[column sep=6em]
    M(2,j) \rar{\scriptsize{M((2,j)\rightarrow(2,j+1))}} &
    M(2,j+1) \\
    M(1,j) \rar{\scriptsize{M((1,j)\rightarrow(1,j+1))}}
    \uar{\scriptsize{M((1,j)\rightarrow(2,j))}}
    &
    M(1,j+1)
    \uar[swap]{\scriptsize{M((1,j+1)\rightarrow(2,j+1))}}
  \end{tikzcd}
\]
commutes for all $j \in \{1,2,\hdots,n-1\}$.
For $(x,y), (i,j)$ distinct vertices of $\Gf{2,n}$ such that $x \leq i$ and $y \leq j$
(that is, there exists a path from $(x,y)$ to $(i,j)$ in $\Gf{2,n}$),
$M((x,y) \rightarrow (i,j))$ is the composition
$M(p) = M(\alpha_\ell) \cdots  M(\alpha_1)$ where $p = (\alpha_l,\hdots,\alpha_\ell)$
is a path from $(x,y)$ to $(i,j)$ in $\Gf{2,n}$. Note that by the commutativity relations, the composition does not depend on the path chosen.

In Algorithm~\ref{alg:compressedmultiplicity2n},
we specialize Algorithm~\ref{alg:compressedmultiplicity} to this setting and add more details.
In particular, we precompute all the compositions $M((x,y) \rightarrow (i,j))$
(as each will be used at some point in the algorithm, anyway), and explicitly write down formulae for
$d_{M'}(I')$ using ranks of certain matrices.

\begin{algorithm}[H]
  \caption{$\rss$-compressed multiplicity ($2\times n$ case)}
  \label{alg:compressedmultiplicity2n}
  \begin{algorithmic}[1]
    \Function{ssCompressedMultiplicityTwoByN}{$M$}
    \State Initialize the function $\dbarfun{\rss}{M}$ on $\II{2,n}$ \EDIT{to zero}
    \State Compute $M((x,y) \rightarrow (i,j))$ for all $(x,y) \neq (i,j)$ with a path from $(x,y)$ to $(i,j)$
    \label{algline:precompmatrices}
    \For {$I \in \II{2,n}$} \label{algline:FORcompressSTART}
    \State Compute $d_{M'}(I')$ using the formula in Proposition~\ref{prop:2nmult},
    \StatexIndent[3] where $M' = \VComp{\rss}{I}{M}$ and $I' = \VComp{\rss}{I}{V_I}$.
    \State $\dbar{\rss}{M}{I} \gets d_{M'}(I')$
    \EndFor \label{algline:FORcompressEND}
    \State \Return $\dbarfun{\rss}{M}$
    \EndFunction
  \end{algorithmic}
\end{algorithm}

\begin{proposition}
  \label{prop:2nmult}
  Let $M \in \rep K\Gf{2,n}$, $I \in \II{2,n}$ and
  let $M' = \VComp{\rss}{I}{M}$ and $I' = \VComp{\rss}{I}{V_I}$
  be their respective compressed representations of $\QComp{\rss}{I}$.
  Below, we use the convention that 
  the symbols
  $s_1$ and $t_1$ 
  % each mean a vertex 
  % are vertices
  stand for vertices
  on row $1$ (i.e. have coordinates $(1,?)$),
  and that $s_2$ and $t_2$ 
  stand 
  %individually mean a vertex 
  for vertices on row $2$ (i.e. have coordinates $(2,?)$).

  Then $I$ is in one of the following four cases, and the value of the compressed multiplicity
  $\dbar{\rss}{M}{I} = d_{M'}(I')$ is given by the respective formula.
  \begin{itemize}
  \item If {$I$ is a rectangle with source $s$ and sink $t$}
    then
    \[
      d_{M'}(I') = \rank M(s\rightarrow t)
    \]

  \item If {$I$ has sources $s_1,s_2$ and sink $t_2$}
    then
    \begin{align*}
      d_{M'}(I') &=
                   \rank M(s_2 \rightarrow t_2)
                   + \rank M(s_1 \rightarrow t_2) \\
                 & \quad - \rank
                   \begin{bmatrix}
                     M(s_2 \rightarrow t_2) &
                     M(s_1 \rightarrow t_2)
                   \end{bmatrix}
    \end{align*}

  \item If {$I$ has source $s_1$ and sinks $t_1$, $t_2$}
    then
    \[
      d_{M'}(I') =
      \rank M(s_1 \rightarrow t_2)
      + \rank M(s_1 \rightarrow t_1)
      - \rank
      \begin{bmatrix}
        M(s_1 \rightarrow t_2) \\
        M(s_1 \rightarrow t_1)
      \end{bmatrix}
    \]

  \item If {$I$ has sources $s_1,s_2$ and sinks $t_1$, $t_2$}
    then
    \begin{align*}
      d_{M'}(I') & =
                   \rank
                   \begin{bmatrix}
                     M(s_2 \rightarrow t_2) & M(s_1 \rightarrow t_2) \\
                     0  & M(s_1 \rightarrow t_1) \\
                   \end{bmatrix}
      + \rank M(s_1 \rightarrow t_2)  \\
      & \quad - \rank
      \begin{bmatrix}
        M(s_1 \rightarrow t_2) \\
        M(s_1 \rightarrow t_1)
      \end{bmatrix}
      - \rank
      \begin{bmatrix}
        M(s_2 \rightarrow t_2) &
        M(s_1 \rightarrow t_2)
      \end{bmatrix}
    \end{align*}
  \end{itemize}
\end{proposition}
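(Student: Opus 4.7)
The plan is to treat each of the four cases of the proposition separately, leveraging Proposition~\ref{prop:ss-fin} to identify the ss-compressed bound quiver as a Dynkin $A_k$-type quiver with $k\le 4$. In each case, $I' = \VComp{\rss}{I}{V_I}$ is the full interval indecomposable of the compressed $A_k$ category (with $K$ at every essential vertex and identity on every arrow), and since $A_k$ is representation-finite, $M' = \VComp{\rss}{I}{M}$ decomposes as $\bigoplus_J L_J^{a_J}$ over the interval indecomposables $L_J$. The task is then to extract $a_{\mathrm{full}} = d_{M'}(I')$ from appropriate ranks of matrices built from the structure maps of $M'$.

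Case 1 is already covered by the computation preceding Proposition~\ref{prop:rankinv}: for a rectangle with source $s$ and sink $t$, the compressed quiver is $A_2$ ($s\to t$), and the normal form of a linear map gives $d_{M'}(I') = \rank M(s\to t)$. For case 2, the compressed quiver is $A_3$ with orientation $\bullet \to \bullet \leftarrow \bullet$; writing $f = M(s_1\to t_2)$ and $g = M(s_2\to t_2)$, I would enumerate the six interval indecomposables of this $A_3$ and compute, for each, its contribution to $\rank f$, $\rank g$, and $\rank[f\; g]$. The resulting linear relations give $d_{M'}(I') = \rank f + \rank g - \rank[f\; g]$ after a short alternating sum. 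Case 3 is entirely dual: either repeat the enumeration for the opposite orientation or invoke case~2 in the opposite category.

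For case 4, the first step is to identify the compressed quiver. Because every arrow of $\Gf{2,n}$ goes from row $1$ to row $2$ vertically, there is no path from $s_2 = (2, b_2)$ to $t_1 = (1, d_1)$, whereas $s_1\to t_1$, $s_1\to t_2$, and $s_2\to t_2$ all exist; hence the compressed bound quiver is the $A_4$ zigzag $t_1 \xleftarrow{f_1} s_1 \xrightarrow{f_2} t_2 \xleftarrow{f_3} s_2$ with $f_i$ the obvious structure maps. This $A_4$ has ten interval indecomposables, indexed by the connected subsets of the four-vertex graph. Tabulating the contribution of each $L_J$ to the four rank quantities $\rank f_2$, $\rank \Phi$, $\rank[f_2\; f_3]$, and $\rank \Psi$, where $\Phi = \left[\smat{f_1 \\ f_2}\right]$ and $\Psi = \left[\smat{f_1 & 0 \\ f_2 & f_3}\right]$, one verifies that the alternating combination $\rank \Psi + \rank f_2 - \rank \Phi - \rank[f_2\; f_3]$ collapses precisely to $a_{\{s_1, t_1, s_2, t_2\}} = d_{M'}(I')$. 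The matrix displayed in the statement of the proposition equals $\Psi$ after a simultaneous row and column permutation, which preserves the rank.

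The principal obstacle is discovering the correct alternating combination of four ranks in case 4; the formula is not obvious a priori. One principled way is via Auslander-Reiten theory: the almost split sequence ending at $I'$ in the $A_4$ zigzag, together with the dimension formula of \cite[Theorem~3]{Asashiba2017} (the same tool already used for the rectangle case), identifies four Hom spaces whose dimensions translate into exactly the four rank quantities above. Once the combination is written down, its verification reduces to the mechanical (if somewhat tedious) enumeration of contributions of the ten indecomposables described in the previous paragraph.
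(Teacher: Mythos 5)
Your proposal is correct and follows essentially the same path as the paper: break into cases via Proposition~\ref{prop:ss-fin}, treat the rectangle case by Proposition~\ref{prop:rankinv}, observe that cases 2 and 3 are dual, and use \cite[Theorem~3]{Asashiba2017} as the engine for the non-rectangular cases. Two remarks are worth making.

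First, a minor directional slip. Since the multiplicity formula is in terms of covariant $\Hom(-, M')$, the relevant sequence is the almost split sequence \emph{starting from} $I'$, namely $0 \to I' \to B \to C \to 0$ with $C = \tau^{-1} I'$, not the one ending at $I'$. In the $A_4$ zigzag of case 4 the paper computes $B \cong V_{\{s_2,t_2,s_1\}} \oplus V_{\{s_1,t_1\}}$ and $C \cong S(s_1)$, and then $d_{M'}(I') = \dim\Hom(I',M') - \dim\Hom(B,M') + \dim\Hom(C,M')$. In case 2 the paper further observes that $I'$ is \emph{injective} in the compressed $A_3$ category, so the formula simplifies to $\dim\Hom(I',M') - \dim\Hom(I'/\soc I', M')$; your enumeration method would reach the same answer without exploiting injectivity.

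Second, a small methodological difference. Your primary plan is to enumerate the $\le 10$ interval indecomposables of the compressed $A_k$ quiver, tabulate each indecomposable's $0/1$ contribution to each of the four rank quantities, and then verify that a specific alternating combination isolates the multiplicity of the full interval. The paper instead computes $\dim\Hom(I',M')$, $\dim\Hom(B,M')$, $\dim\Hom(C,M')$ directly by writing out the commutativity conditions defining each Hom space as a kernel, and reading off the dimensions as ``$\dim(\text{domain}) - \rank(\cdot)$''. Both give the same formulas (your $\Psi$, $\Phi$, and $[f_2\; f_3]$ coincide with the paper's block matrices up to simultaneous row/column permutation). The paper's route has the advantage of producing the alternating combination \emph{constructively} rather than requiring one to guess it first and then check by tabulation; your route has the advantage of being completely elementary once the formula is written down.
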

\begin{proof}
%  By Proposition~4.1 in \cite{asashiba2022interval},
  Each element $I$ of $\II{2,n}$ has a staircase form, which is denoted by:
  \[
    I  = \bigsqcup_{i=j}^k [b_i,d_i]_i
  \]
  for some integers $1\leq j\leq k \leq 2$ and some integers $1 \leq b_i \leq d_i \leq n$ for
  each $j \leq i \leq k$ such that $b_{i+1}\leq b_{i}\leq d_{i+1}\leq d_{i}$ for any $i\in \{j,\dots,k-1\}$.

  The two cases given by
  \begin{itemize}
  \item $j = k$, or
  \item $b_1 = b_2$ and $d_1 =d_2$
  \end{itemize}
  correspond to $I$ being a rectangle (with source $s = (j,b_j)$ and sink $t = (j,d_j)$, or
  source $s = (1,b_1)$ and sink $t = (2,d_2)$, respectively).
  Here, Proposition~\ref{prop:rankinv} gives the formula for the compressed multiplicity.

  Thus, we are left with the cases that $1 = j < k = 2$, and that $b_1 \neq b_2$ or $d_1 \neq d_2$.
  By the general restriction that $b_{2}\leq b_{1}\leq d_{2}\leq d_{1}$, we have the following three cases
  \begin{itemize}
  \item $b_2 < b_1 \leq d_2 = d_1$.
    This corresponds to the case that $I$ has
    sources $s_1 = (1,b_1), s_2 = (2,b_2)$ and
    sink $t_2 = (2,d_2)$, as illustrated below:
    \[
      \tikzset{main node/.style={circle,fill=black, inner sep=1pt},}
      \begin{tikzpicture}

        \draw[fill=gray!20,draw=gray!50] (0,0) -- (2,0) -- (2,1) -- (0,1) -- cycle;
        \node[main node] at (-1.5,1) (S2) {};
        \node[main node] at (2,1) (T2) {};
        \node[main node] at (0,0) (S1) {};
        % \node[main node] at (2,0) {};

        \node[left=0 pt of S1.west] {$s_1$};
        \node[left=0 pt of S2.west] {$s_2$};
        \node[right=0 pt of T2.east] {$t_2$};

        \draw[-stealth, thick] (S1.north east) -- (T2.south west);
        \draw[-stealth, thick] (S2.east) -- (T2.west);
      \end{tikzpicture}
    \]
    with $\QComp{\rss}{I}:
    \begin{tikzcd}[graphstyle]
      s_1 \rar & t_2 & s_2 \lar
    \end{tikzcd}
    $ emphasized. Then, the compressed representations are given by
    \[
      I':
      \begin{tikzcd}
        K \rar{1} & K & K \lar[swap]{1}
      \end{tikzcd}
      \text{ and }
      M':
      \begin{tikzcd}
        M(s_1) \rar{M(s_1\rightarrow t_2)} & M(t_2) & M(s_2) \lar[swap]{M(s_2 \rightarrow t_2)}
        \mathrlap{.}
      \end{tikzcd}
    \]
    We note that $I'$ is injective with socle given by
    \[
    \soc I':
      \begin{tikzcd}
        0 \rar{0} & K & 0\mathrlap{.} \lar[swap]{0}
      \end{tikzcd}
    \]
    Using \cite[Theorem~3]{Asashiba2017}, we have
    \[
      d_{M'}(I') = \dim \Hom (I',M') - \dim \Hom(I'/\soc I', M').
    \]

    A homomorphism $I' \rightarrow M'$ is given by triples $(x,y,z)$ such that
    \[
      \begin{tikzcd}[column sep=4em]
        K \rar{1}\dar{x} & K\dar{y} & K \lar[swap]{1}\dar{z} \\
        M(s_1) \rar{M(s_1\rightarrow t_2)} & M(t_2) & M(s_2) \lar[swap]{M(s_2 \rightarrow t_2)}
      \end{tikzcd}
    \]
    commutes. That is,
    $
    y = M(s_2 \rightarrow t_2) z = M(s_1 \rightarrow t_2) x.
    $
    In other words, the homomorphism space $\Hom (I',M') $ is given by solutions to
    \[
      \begin{bmatrix}
        M(s_2 \rightarrow t_2) &
        M(s_1 \rightarrow t_2)
      \end{bmatrix}
      \begin{bmatrix}
        z \\
        -x
      \end{bmatrix}
      = 0
    \]
    (with $y$ fully determined by $x$),
    which has dimension
    equal to
    \[
    \dim M(s_2) + \dim M(s_1)
    - \rank
    \begin{bmatrix}
      M(s_2 \rightarrow t_2) &
      M(s_1 \rightarrow t_2)
    \end{bmatrix}.
    \]

    On the other hand, a homomorphism $I'/\soc I' \rightarrow M'$ is given by triples $(x,0,z)$ such that
    \[
      \begin{tikzcd}[column sep=4em]
        K \rar{0}\dar{x} & 0\dar{0} & K \lar[swap]{0}\dar{z} \\
        M(s_1) \rar{M(s_1\rightarrow t_2)} & M(t_2) & M(s_2) \lar[swap]{M(s_2 \rightarrow t_2)}
      \end{tikzcd}
    \]
    commutes. Thus
    \begin{align*}
      \dim \Hom(I'/\soc I', M')
      & = \left(\dim M(s_1) - \rank M(s_1 \rightarrow t_2)\right) \\
      & \quad + \left(\dim M(s_2) - \rank M(s_2 \rightarrow t_2)\right).
    \end{align*}
    Combining the above formulas yields the claimed formula for $d_{M'}(I')$.

  \item $b_2 = b_1 \leq d_2 < d_1$.
    This corresponds to the case that $I$ has
    source $s_1 = (1,b_1)$ and
    sinks $t_1 = (1,d_1), t_2 = (2,d_2)$ as illustrated below:
    \[
      \tikzset{main node/.style={circle,fill=black, inner sep=1pt},}
      \begin{tikzpicture}

        \draw[fill=gray!20,draw=gray!50] (0,0) -- (2,0) -- (2,1) -- (0,1) -- cycle;
        \node[main node] at (2,1) (T2) {};
        \node[main node] at (0,0) (S1) {};
        \node[main node] at (3.5,0) (T1) {};
        % \node[main node] at (2,0) {};

        \node[left=0 pt of S1.west] {$s_1$};
        \node[right=0 pt of T1.east] {$t_1$.};
        \node[right=0 pt of T2.east] {$t_2$};

        \draw[-stealth, thick] (S1.north east) -- (T2.south west);
        \draw[-stealth, thick] (S1.east) -- (T1.west);
      \end{tikzpicture}
    \]
    The proof for the formula of $d_{M'}(I')$ in this case is dual to the previous case.

  \item $b_2 < b_1 \leq d_2 < d_1$.
    This corresponds to the case that $I$ has
    sources $s_1 = (1,b_1), s_2 = (2,b_2)$ and
    sinks $t_1 = (1,d_1), t_2 = (2,d_2)$ as illustrated below:
    \[
      \tikzset{main node/.style={circle,fill=black, inner sep=1pt},}
      \begin{tikzpicture}

        \draw[fill=gray!20,draw=gray!50] (0,0) -- (2,0) -- (2,1) -- (0,1) -- cycle;
        \node[main node] at (-1.5,1) (S2) {};
        \node[main node] at (2,1) (T2) {};
        \node[main node] at (0,0) (S1) {};
        \node[main node] at (3.5,0) (T1) {};
        % \node[main node] at (2,0) {};

        \node[left=0 pt of S1.west] {$s_1$};
        \node[left=0 pt of S2.west] {$s_2$};
        \node[right=0 pt of T1.east] {$t_1$};
        \node[right=0 pt of T2.east] {$t_2$};

        \draw[-stealth, thick] (S1.north east) -- (T2.south west);
        \draw[-stealth, thick] (S2.east) -- (T2.west);
        \draw[-stealth, thick] (S1.east) -- (T1.west);
      \end{tikzpicture}
    \]

    Then, the compressed representations are given by
    \[
      I':
      \begin{tikzcd}
        K \rar{1} & K & K \lar[swap]{1} \rar{1} & K
      \end{tikzcd}
    \]
    and
    \[
      M':
      \begin{tikzcd}[column sep=4em]
        M(s_2) \rar{M(s_2\rightarrow t_2)} & M(t_2) & M(s_1) \lar[swap]{M(s_1 \rightarrow t_2)} \rar{M(s_1 \rightarrow t_1)} & M(t_1)
        \mathrlap{.}
      \end{tikzcd}
    \]
    The almost split sequence starting from $I'$ is given by
    \[
      \begin{tikzcd}
        0 \rar & I' \rar & B \rar & C \rar & 0
      \end{tikzcd}
    \]
    where
    \[
      B:
      \begin{tikzcd}[ampersand replacement=\&]
        K \rar{1} \& K \& K^2 \lar[swap]{\begin{bmatrix}1 & 0\end{bmatrix}} \rar{\begin{bmatrix}0 & 1 \end{bmatrix}} \& K
      \end{tikzcd}
    \]
    and
    \[
      C:
      \begin{tikzcd}[ampersand replacement=\&]
        0 \rar{0} \& 0 \& K \lar[swap]{0} \rar{0} \& 0
      \end{tikzcd}.
    \]
    Using \cite[Theorem~3]{Asashiba2017}, we have
    \begin{equation}
      \label{eq:dimhomformula}
      d_{M'}(I') = \dim \Hom (I',M') - \dim \Hom(B, M') + \dim \Hom(C,M').
    \end{equation}

  For $(x,y,z,w) \in \Hom(I',M')$, the commutativity of
  \[
    \begin{tikzcd}[column sep=4em]
      K \rar{1} \dar{x} & K \dar{y} & K \dar{z} \lar[swap]{1} \rar{1} & K \dar{w} \\
      M(s_2) \rar{M(s_2\rightarrow t_2)} & M(t_2) & M(s_1) \lar[swap]{M(s_1 \rightarrow t_2)} \rar{M(s_1 \rightarrow t_1)} & M(t_1)
    \end{tikzcd}
  \]
  is equivalent to
  \[
    M(s_2 \rightarrow t_2) x = y = M(s_1\rightarrow t_2) z \text{ and } w = M(s_1 \rightarrow t_1) z.
  \]
  Then, each homomorphism is uniquely determined by a solution of
  \[
    \begin{bmatrix}
      M(s_2 \rightarrow t_2) &
      M(s_1\rightarrow t_2)
    \end{bmatrix}
    \begin{bmatrix}
      x \\
      -z
    \end{bmatrix}
    = 0.
  \]
  Thus,
  \begin{equation}
    \label{eq:dimhominterval}
    \begin{split}
      \dim \Hom(I', M') & = \dim M(s_2) + \dim M(s_1) \\
      & \quad - \rank
      \begin{bmatrix}
        M(s_2 \rightarrow t_2) &
        M(s_1 \rightarrow t_2)
      \end{bmatrix}.
    \end{split}
  \end{equation}

  Next, for $(x,y,z,w) \in \Hom(B,M')$, the commutativity of
  \[
    \begin{tikzcd}[column sep=4em,ampersand replacement=\&]
      K \rar{1}\dar{x} \& K\dar{y} \& K^2 \lar[swap]{\begin{bmatrix}1 & 0\end{bmatrix}} \rar{\begin{bmatrix}0 & 1 \end{bmatrix}} \dar{\begin{bmatrix}  z_1 & z_2 \end{bmatrix}}\& K \dar{w} \\
      M(s_2) \rar{M(s_2\rightarrow t_2)} \& M(t_2) \& M(s_1) \lar[swap]{M(s_1 \rightarrow t_2)} \rar{M(s_1 \rightarrow t_1)} \& M(t_1)
    \end{tikzcd}
  \]
  is equivalent to
  \begin{align*}
    M(s_2 \rightarrow t_2) x & = y \\
    M(s_1 \rightarrow t_2) z_1 & = y \\
    M(s_1 \rightarrow t_2) z_2 & = 0 \\
    M(s_1 \rightarrow t_1) z_1 & = 0 \\
    M(s_1 \rightarrow t_1) z_2 & = w\mathrlap{.}
  \end{align*}
  Rewriting the above, we get that each homomorphism is uniquely determined by a solution to
  \[
    \begin{bmatrix}
      M(s_2 \rightarrow t_2) & M(s_1 \rightarrow t_2) \\
      0 & M(s_1 \rightarrow t_1)
    \end{bmatrix}
    \begin{bmatrix}
      x \\
      -z_1
    \end{bmatrix} = 0
    \text{ and }
    M(s_1 \rightarrow t_2) z_2  = 0
  \]
  with $y$ and $w$ determined from $x$ and $z_2$, respectively. Thus,

  \begin{equation}
    \label{eq:dimhommid}
    \begin{split}
      \dim \Hom(B,M') &= \dim M(s_2) + \dim M(s_1) \\
      & \quad - \rank
      \begin{bmatrix}
        M(s_2 \rightarrow t_2) & M(s_1 \rightarrow t_2) \\
        0 & M(s_1 \rightarrow t_1)
      \end{bmatrix}\\
      & \quad + \dim M(s_1)
      - \rank
      M(s_1 \rightarrow t_2)
    \end{split}
  \end{equation}

  Finally, it is clear that
  \begin{equation}
    \label{eq:dimhomtauinv}
    \begin{split}
      \dim \Hom(C,M')
      & = \dim\left(\ker M(s_1 \rightarrow t_1) \cap \ker M(s_1 \rightarrow t_2)\right)\\
      & = \dim \ker \begin{bmatrix}
        M(s_1 \rightarrow t_1) \\
        M(s_1 \rightarrow t_2)
      \end{bmatrix}\\
      & = \dim M(s_1) -
      \rank
      \begin{bmatrix}
        M(s_1 \rightarrow t_1) \\
        M(s_1 \rightarrow t_2)
      \end{bmatrix}.
    \end{split}
  \end{equation}

  Substituting Equations~\eqref{eq:dimhominterval},~\eqref{eq:dimhommid},~\eqref{eq:dimhomtauinv} into Equation~\eqref{eq:dimhomformula} gives the claimed formula.
\end{itemize}
%\qed
\end{proof}

Let $\omega < 2.373$ be the matrix multiplication exponent \cite{don1990matrix,williams2012multiplying}.
\begin{theorem}[Compressed multiplicity ($2\times n$ case)]
  \label{thm:algcost_compressedmultiplicity2n}
  For $M$ a persistence module over $\Gf{2,n}$, 
  Algorithm~\ref{alg:compressedmultiplicity2n} computes $\dbarfun{\rss}{M}$
  with time complexity
  \[
    O\left(\frac{2^\omega+5}{24}n^4d^\omega\right).
  \]
  where
  $d = \max_{v \in \left(\Gf{2,n}\right)_0} \dim M(v)$.
\end{theorem}

\begin{proof}
  First, let us analyze Line~\ref{algline:precompmatrices} of Algorithm~\ref{alg:compressedmultiplicity2n},
  which computes $M((x,y)\to (i,j)) = M(p)$ for $(x,y) \neq (i,j)$ with a path $p$ from $(x,y)$ to $(i,j)$.
  % nonstationary paths $p$ in $\Gf{2,n}$.
  The value of $M(p)$ for paths $p$ with length equal to $1$ (arrows) are already known.
  Assume that the values of $M(p)$ for all paths of length $\ell$ are already computed.
  Then, the value of $M(p)$ for each path $p$ of length $\ell+1$ can be computed by one matrix multiplication each.
  We note further that $M((x,y)\to (i,j)) = M(p)$ does not depend on which particular path $p$ is taken from $(x,y)$ to $(i,j)$.
  Thus, we can inductively compute the value of $M((x,y)\to (i,j))$ using one matrix multiplication for each pair of vertices $(x,y)$, $(i,j)$ such that $(x,y) \neq (i,j)$ and there is a path of length greater than $1$ from $(x,y)$ to $(i,j)$.
  Since there are
  \[
    \frac{3}{2}(n+1)n - 2n - (3n-2) = O\left(\frac{3}{2}n^2\right)
  \]
  such pairs of vertices $(x,y) \neq (i,j)$ in the $2\times n$ commutative grid
  by a simple combinatorial argument,
  Line~\ref{algline:precompmatrices} of Algorithm~\ref{alg:compressedmultiplicity2n} can be performed in
  $O(\frac{3}{2}n^2d^\omega)$.

  Next, we analyze  Lines~\ref{algline:FORcompressSTART}~to~\ref{algline:FORcompressEND} of Algorithm~\ref{alg:compressedmultiplicity2n}.
  By \cite[Corollary~4.12]{asashiba2022interval},
  there are
  \[
    \#\II{2,n} = \frac{1}{24}n(n+1)(n^2+5n+30) = O\left(\frac{1}{24}n^4\right)
  \]
  intervals $I$ to process. For each interval $I$,
  the computation of $d_{M'}(I')$ using Proposition~\ref{prop:2nmult}
  involves computing the rank of a $2d \times 2d$, a $2d \times d$, a $d \times 2d$, and a $d\times d$ matrix in the worst case.
  Note that the rank of an $e\times f$ matrix ($e \leq f$) can be computed with $O(fe^{\omega-1})$ field operations by Gaussian elimination \cite{ibarra1982generalization}.
  Thus, we get a cost of $O((2^\omega d^\omega +5d^\omega)\frac{1}{24}n^4)$ for the computation of $d_{M'}(I')$.

  Overall, we get a cost of $O(\frac{3}{2}n^2d^\omega + \frac{2^\omega+5}{24}n^4d^\omega)$ dominated by the latter term,
  giving the result.
  %\qed
\end{proof}

For $I \in \II{2,n}$, as shown in Example~\ref{example:cover2n}, $\#\Cov (I) \leq 4$. Thus, we get the following.
\begin{corollary}[M\"obius inversion $\aprx{\ast}{M}$ ($2\times n$ case)]
  \label{cor:algcost_inversion2n}
  With $m=2$, Algorithm~\ref{alg:inversion} (M\"obius inversion $\aprx{\ast}{M}$ of $\dbarfun{\ast}{M}$) can be performed with time complexity
  \[
    O\left(\frac{16}{3}n^4\right).
  \]
\end{corollary}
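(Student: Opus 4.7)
The plan is to apply Theorem~\ref{thm:algcost_inversion} directly, specialized to the parameters that hold in the $2\times n$ setting, and to use the interval-count formula from the previous proof (invoked via \cite[Corollary~4.12]{asashiba2018interval}).

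First, I would identify each of the three quantities appearing in the general bound $O(\#\II{m,n}\,2^{C}\,C\,\min\{m,n\})$ for $m=2$. The trivial one is $\min\{m,n\}=2$ (assuming $n\ge 2$, which is the only interesting case). Next, Example~\ref{example:cover2n} already states that $\#\Cov(I)\le 4$ for every $I\in\II{2,n}$, so one may take $C=4$. Finally, as invoked in the proof of Theorem~\ref{thm:algcost_compressedmultiplicity2n}, one has $\#\II{2,n}=\tfrac{1}{24}n(n+1)(n^2+5n+30)=O(\tfrac{1}{24}n^4)$.

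Then I would substitute into Theorem~\ref{thm:algcost_inversion}:
\[
\#\II{2,n}\cdot 2^{C}\cdot C\cdot \min\{m,n\}
\;=\;O\!\left(\tfrac{1}{24}n^4\right)\cdot 2^{4}\cdot 4\cdot 2
\;=\;O\!\left(\tfrac{16\cdot 4\cdot 2}{24}\,n^4\right)
\;=\;O\!\left(\tfrac{16}{3}n^4\right),
\]
which gives the stated bound. There is no real obstacle here: the only thing to be careful about is that the constant $\tfrac{16}{3}$ tracks correctly from the arithmetic $\tfrac{128}{24}=\tfrac{16}{3}$, and that the inputs to Theorem~\ref{thm:algcost_inversion} are indeed the right ones for $m=2$ (in particular, that the bound $\#\Cov(I)\le 4$ from Example~\ref{example:cover2n} applies to every $I\in\II{2,n}$, not just to generic ones, which is clear from the case analysis in that example).
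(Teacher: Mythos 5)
Your proposal is correct and follows exactly the same route as the paper: substitute $m=2$, $C=4$ (from Example~\ref{example:cover2n}), and $\#\II{2,n}=O(\tfrac{1}{24}n^4)$ into the bound of Theorem~\ref{thm:algcost_inversion}, and simplify $\tfrac{16\cdot 4\cdot 2}{24}=\tfrac{16}{3}$. Nothing differs in substance from the paper's argument.
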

\begin{proof}
  Substituting $m=2$, $C = 4$, and $\#\II{2,n} = O\left(\frac{1}{24}n^4\right)$ into
  \[
    O(\#\II{m,n}2^C C \min\{m,n\})
  \]
  from Theorem~\ref{thm:algcost_inversion}, we get the result.
  %\qed
\end{proof}

Combining Theorem~\ref{thm:algcost_compressedmultiplicity2n} and Corollary~\ref{cor:algcost_inversion2n} with $\ast = \rss$,
we get an overall cost of
\[
  O\left(\frac{2^\omega+5}{24}n^4d^\omega + \frac{16}{3}n^4\right)
\]
for computing the interval-decomposable \EDIT{replacement} $\aprxM{\rss}{M}$ of $M$ in the $2\times n$ case.

\paragraph{Implementation}
As part of the software ``pmgap'' \cite{pmgap},
we provide an implementation of Algorithms~\ref{alg:compressedmultiplicity2n}~and~\ref{alg:inversion}
in the $2\times n$ case. The software ``pmgap'' builds upon the GAP~\cite{GAP4} package QPA~\cite{qpa},
which provides data structures and algorithms for computations on
(quotients of) path algebras and their representations. The software ``pmgap'' uses those data structures to
represent equioriented commutative grids and persistence modules over them,
and implements the algorithms of this paper not in QPA.

\paragraph{Randomly generated persistence modules}
For the computational experiments below, given values for $n$ and $d$ we randomly generate persistence modules $V$ (with $\mathbb{F}_2$ coefficients)
over the commutative grid $\Gf{2,n}$, such that all the vector spaces of $V$ have dimension $d$.

Below, whenever we say to randomly generate a $j\times k$ $\mathbb{F}_2$-matrix $M$, we simply generate a matrix with entries independently and uniformly sampled from $\mathbb{F}_2$. If required, it is also possible to randomly choose a valid rank and then generate a random matrix with that rank. However, this comes at the cost of more computation time to generate the random matrices.
% We first randomly choose a rank $r$ between $0$ and $\min(j,k)$ and then compute $M = QE_kP$, where $Q$ and $P$ are randomly generated $j\times j$ and $k \times k$ invertible $\mathbb{F}_2$-matrices, and $E_k$ is the $j\times k$ matrix
% \[
%   \begin{bmatrix}
%     1 & & &\\
%     & \smash{\ddots} &  \\
%     & & 1 & \\
%     & & & 0 & \\
%     \hphantom{\ddots}& &\hphantom{\ddots} &\hphantom{\ddots} & \smash{\ddots} \\
%   \end{bmatrix}
% \]
% with rank $r$. Note that a $j\times j$ $\mathbb{F}_2$-matrix with uniformly sampled entries has probability
% \[
%   \prod\limits_{\ell=1}^j (1-{2}^{-\ell})
% \]
% to be invertible, with limit $0.28878\dots$ as $j \rightarrow \infty$ \cite[Section~1.2]{morrison2005integer}. Thus we sample repeatedly to get the invertible matrices $Q$ and $P$.

We use the following procedure to randomly generate the persistence module $V$.
First, we randomly generate $d\times d$ $\mathbb{F}_2$-matrices for each of the solid arrows below:
\[
  \newcommand{\bb}{\bullet}
  \newcommand{\rard}{\rar[dashed,gray]}
  \newcommand{\uard}{\uar[dashed,gray]}
  \begin{tikzcd}[graphstyle,every matrix/.append style={name=m}]
    \bb \rar & \bb \rar & \;\cdots\; \rar & \bb \rar & \bb \\
    \circ \rard \uard & \circ \rard \uard & {\color{gray}\;\cdots\;} \rard & \circ \rard\uard & \bb\mathrlap{.} \uar
  \end{tikzcd}
\]
Then, for each square from right to left, we iteratively compute pullbacks (to guarantee commutativity) and multiply with another random matrix (to reach the correct dimension $d$ and to add more randomness).
That is, given $d\times d$ matrices representing the linear maps $f$ and $g$ as below:
\[
  \begin{tikzcd}
    & \mathbb{F}_2^d \rar{f} & \mathbb{F}_2^d \\
    & \mathbb{F}_2^k \uar[dashed,swap]{\phi_1} \rar[dashed]{\phi_2} & \mathbb{F}_2^d \uar{g} \\
    \mathbb{F}_2^d \ar[ur,dashed]{}{\phi_3} \ar[ruu,dashed,bend left]{}{\phi_1\phi_3} \ar[rru,dashed,swap, bend right]{}{\phi_2\phi_3}&&
  \end{tikzcd},
\]
we compute
(matrices with respect to some basis of) the pullback maps $(\phi_1, \phi_2)$. Then, we randomly generate a $k \times d$ matrix representing $\phi_3$, and obtain the commutative diagram
\[
  \begin{tikzcd}
    \mathbb{F}_2^d \rar{f} & \mathbb{F}_2^d \\
    \mathbb{F}_2^d \uar[]{\phi_1\phi_3} \rar[swap]{\phi_2\phi_3} & \mathbb{F}_2^d\mathrlap{.} \uar{g}
  \end{tikzcd}
\]
It is clear that a persistence module $V$ over $\Gf{2,n}$ is obtained by the above.

\paragraph{Computational experiments}
We measure the time needed to compute the \EDIT{interval-decomposable replacement} using pmgap for some small values of $n$ and $d$. Computations were performed on Ubuntu~20.04.2~LTS running in WSL1 inside a Windows~10 Pro machine
with an AMD Ryzen 5 5600X 6-Core\footnote{Note that the current implementation does not take advantage of multiple cores or threads.} Processor.
In Table~\ref{table:runtimes1}, we display the resulting runtimes in milliseconds. Each timing (each entry in the table) is measured as the average of at least five runs. Each run consists of the computation of the compressed multiplicity and \EDIT{interval-decomposable replacement} of a given persistence module. Additional runs are performed as needed so that the total time taken exceeds $100~\text{ms}$, to ensure reliable measurement of runtimes; this is only needed for the smaller values of $n$ and $d$. Note that we exclude the time taken for generating the underlying path algebra, list of interval representations, and the persistence modules $V$.

\begin{table}[!h]\centering
  \caption{Runtimes (in ms) for the \EDIT{interval-decomposable replacement} using pmgap}
  \begin{threeparttable}
    \begin{tabular}{l|rrrrrr}
      \toprule
      {\diagbox{$n$}{$d$}} &     100 &     200 &     400 &      800 \\
      \midrule
      4  &   11.88 &   34.40 &   112.40 &   471.80 \\
      8  &  131.20 &  328.20 &  1,152.80 &  5,115.60 \\
      16 & 1,881.40 & 4,415.80 & 14,918.80 & 67,171.60 \\
      \bottomrule
    \end{tabular}
    \begin{tablenotes}\footnotesize
    \item[*] Runtimes are measured as an average of at least five runs.
    \item[*] Runtimes do not include time needed for generating the underlying path algebra,
      list of interval representations, and the persistence modules.
    \end{tablenotes}
  \end{threeparttable}
  \label{table:runtimes1}
\end{table}

For completeness, we also time the following operations:
generation of the underlying path algebra and its list of interval representations,
generation of a random  persistence module,
computation of the \EDIT{interval-decomposable replacement}.
The results (of just one run each) are displayed in Table~\ref{table:runtimes2}.
Note that the underlying path algebra and its list of interval representations do not depend on the dimension $d$.
Thus, we time that operation only once for each $n$.

\begin{table}[!h]\centering
  \caption{Runtimes (in ms) in pmgap}
  \begin{threeparttable}
    \begin{tabular}{cr|rrrr}
      \toprule
      \textbf{operation} & \boldmath$n$ &         &         &         &         \\
      \midrule
      \multirow{3}{*}{\begin{tabular}{c}algebra\\and\\its intervals\end{tabular}}
      & 4  & & & & \multicolumn{1}{r}{31.0}\\
      & 8  & & & & \multicolumn{1}{r}{562.0}   \\
      & 16 & & & & \multicolumn{1}{r}{16,578.0} \\
      \bottomrule\\
      \toprule
                & \boldmath$d$ &   100.0 &   200.0 &   400.0 &   800.0  \\
      \textbf{operation} & \boldmath$n$ &         &         &         &         \\
      \midrule
      \multirow{3}{*}{\begin{tabular}{c} random\\persistence\\module\end{tabular}}
      & 4  &    46.0 &   171.0 &   640.0 &  2594.0 \\
      & 8  &    94.0 &   375.0 &  1,422.0 &  5,516.0 \\
      & 16 &   219.0 &   781.0 &  3,047.0 & 12,609.0 \\
      \midrule
      \multirow{3}{*}{\begin{tabular}{c} interval-\\\EDIT{decomposable}\\\EDIT{replacement}\end{tabular}}
      & 4  &    15.0 &    31.0 &   109.0 &   484.0 \\
      & 8  &   125.0 &   328.0 &  1,156.0 &  5,141.0 \\
      & 16 &  1,828.0 &  4,422.0 & 14,953.0 & 67,218.0 \\
      \bottomrule
    \end{tabular}
  \end{threeparttable}
  \label{table:runtimes2}
\end{table}

\paragraph{Demonstrations}
The pmgap repository \cite{pmgap} contains demonstrations for these computations.

We also provide a browser-based implementation \cite{intervaldemo} demonstrating the computation of
\EDIT{interval-decomposable replacement} of randomly generated persistence modules.
Note that the browser-based demo \cite{intervaldemo} was developed separately of pmgap,
and does not rely on the installation of pmgap.

%%% Local Variables:
%%% mode: latex
%%% TeX-master: "main"
%%% End:

\section*{Acknowledgements}
  On behalf of all authors, the corresponding author states that there is no conflict of interest.

% \section*{References}
\bibliographystyle{alpha}
\bibliography{refs}

\end{document}